
\UseRawInputEncoding

\documentclass[12pt]{amsart}
\usepackage{amssymb}
\usepackage[all]{xy}
\usepackage[toc,page,title,titletoc,header]{appendix}
\usepackage{xcolor}

\setlength{\headheight}{8pt}
\setlength{\textheight}{22.4cm}
\setlength{\textwidth}{14.5cm}
\setlength{\oddsidemargin}{.1cm}
\setlength{\evensidemargin}{.1cm}
\setlength{\topmargin}{0.2cm}

\setcounter{secnumdepth}{2}
\setcounter{tocdepth}{1}

\theoremstyle{plain}
\newtheorem{thm}{Theorem}[section]

\newtheorem*{thm1.1}{Theorem 1.1}

\newtheorem{lemma}[thm]{Lemma}
\newtheorem{lem}[thm]{Lemma}
\newtheorem{cor}[thm]{Corollary}
\newtheorem{pro}[thm]{Proposition}

\theoremstyle{definition}

\newtheorem{rem}[thm]{Remark}

\newtheorem{defi}[thm]{Definition}
\newtheorem{exe}[thm]{Example}

\newtheorem{que}[thm]{Question}
\newtheorem{con}[thm]{Conjecture}
\newtheorem{fact}[thm]{Fact}

\numberwithin{equation}{section}
\newcounter{elno}                

\newcommand{\la}{\lambda}

\newcommand{\an}{{\rm an}}

\newcommand{\Pic}{{\rm Pic}}
\newcommand{\CH}{{\rm CH}}

\newcommand{\Spec}{{\rm Spec \,}}

\newcommand{\Frac}{{\rm Frac \,}}

\newcommand{\Char}{{\rm char}}

\newcommand{\id}{{\rm id}}


\newcommand{\Per}{{\rm Per}\,}

\renewcommand{\d}{\stackrel{\mbox{\scriptsize{$\bullet$}}}{}}

\newcommand{\boxtensor}{{\Box\kern-9.03pt\raise1.42pt\hbox{$\times$}}}



\newcommand{\propsubset}
{\mbox{$\textstyle{
			\subseteq_{\kern-5pt\raise-1pt\hbox{\mbox{\tiny{$/$}}}}}$}}

\newcommand{\sL}{{\mathcal L}}

\newcommand{\sO}{{\mathcal O}}

\newcommand{\sV}{{\mathcal V}}

\newcommand{\sX}{{\mathcal X}}

\newcommand{\sZ}{{\mathcal Z}}
\newcommand{\A}{{\mathbb A}}

\newcommand{\bB}{{\mathbf{B}}}
\newcommand{\C}{{\mathbb C}}

\renewcommand{\P}{{\mathbb P}}
\newcommand{\Q}{{\mathbb Q}}
\newcommand{\R}{{\mathbb R}}

\newcommand{\Z}{{\mathbb Z}}
\newcommand{\bk}{{\mathbf{k}}}

\begin{document}
	\title[]{Algebraic dynamics and recursive inequalities}
	
	\author{Junyi Xie}
	
	\address{Beijing International Center for Mathematical Research, Peking University, Beijing 100871, China}
	
	\email{xiejunyi@bicmr.pku.edu.cn}

	\thanks{The author is supported by the NSFC Grant No.12271007.}

	\date{\today}

	\bibliographystyle{alpha}

	
	\maketitle
	
	\centerline{\it In memory of the late Professor Nessim Sibony}

	\begin{abstract}
We get three basic  results in algebraic dynamics:
	(1). We give the first algorithm to compute the dynamical degrees to arbitrary precision.
	(2). We prove that for a family of dominant rational self-maps, the dynamical degrees are lower semi-continuous with respect to the Zariski topology. This implies a conjecture of Call and Silverman. 
	(3). We prove that the set of periodic points of a cohomologically hyperbolic rational self-map is Zariski dense. 
	
	Moreover, we prove the Kawaguchi-Silverman conjecture for a class of self-maps of projective surfaces including all the birational ones.
	
	In fact, for every dominant rational self-map, we find a family of recursive inequalities of some dynamically meaningful cycles. Our proofs are based on these  inequalities.
	\end{abstract}
	
	\tableofcontents
	
	\section{Introduction}

Let $\bk$ be a field. Let $X$ be a projective variety of dimension $d$ over $\bk.$ Let $f: X\dashrightarrow X$ be a dominant rational self-map. 
The aim of algebraic dynamics is to study algebraic and dynamical properties of the iterates
of $f$.

\subsection{Dynamical degrees}\label{subsecdynamicaldeg}
The most fundamental dynamical invariants associated to an algebraic dynamical system is arguably its dynamical degrees.

Let $L$ be a big and nef line bundle in $\Pic(X)$.  Denote by $I(f)$ the indeterminacy locus of $f$.
Let $X'$ be the graph of $f$ in $X\times X$ i.e. the Zariski closure of $\{(x,f(x))|\,\, x \text{ is a closed point in } X\setminus I(f)\}$ and let $\pi_1$, $\pi_2$ be the projection to the first and the second factors. For $i=0,\dots, d$, the \emph{$i$-th degree} of $f$ is $$\deg_{i,L}f:=((\pi_2^*L)^i\cdot (\pi_1^*L)^{d-i}).$$
Using the terminology from Section \ref{subsectioncocycles} and \ref{subseclinebun}, we may write $$\deg_{i,L}f=((f^*L)^i\cdot L^{d-i})$$ without specifying the birational model $X'.$
The $i$-th \textit{dynamical degree} of $f$ is
$$
	\la_i(f):=\lim_{n\to\infty}(\deg_{i,L} f^n)^{1/n}\geq 1.
$$
The existence of the above limit is non-trivial. 
It was proved by Russakovskii and Shiffman \cite{Russakovskii1997} when $X=\P_{\C}^d$, and by Dinh and Sibony \cite{Dinh2005}
when $X$ is projective over $\C$. As shown in \cite{Dinh2004} by Dinh and Sibony, the dynamical degrees can be defined even for meromorphisms on K\"ahler manifolds.
It was proved by Truong \cite{Truong2020} and Dang \cite{Dang2020} in arbitrary characteristic. The methods of Truong \cite{Truong2020} and Dang \cite{Dang2020} are different. 
Truong's method is based on Jong's alterations and Roberts' effective version of Chow's moving lemma. This method can be viewed as an algebraic mimic of Dinh-Sibony's proof using positively closed currents \cite{Dinh2004,Dinh2005}. Dang's method is based on Siu's inequality. 
The definition of $\la_i(f)$ does not depend on the choice of $L$ \cite{Dinh2004,Dinh2005,Truong2020,Dang2020}.
Moreover, if $\pi: X\dashrightarrow Y$ is a generically finite and dominant rational map between varieties and $g\colon Y\dashrightarrow Y$ is a rational self-map such that $g\circ\pi=\pi\circ f$, then $\la_i(f)=\la_i(g)$ for all $i$. This can be shown by combining \cite[Theorem 1]{Dang2020} with the projection formula.
Another way to prove it is to apply the product formula for relative dynamical degrees directly (c.f. \cite{Dinh2011}, \cite{Dang2020} and \cite[Theorem 1.3]{Truong2020}).

\medskip

Roughly speaking, the dynamical degrees measure the algebraic complexity of $f$. It controls the topological complexity of $f$.
When $X$ is a smooth projective variety over $\C$ and $f$ is an endomorphism, fundamental results of Gromov \cite{Gromov2003} and Yomdin  \cite{Yomdin1987} 
show that  $$h_{top}(f^{\an})=\max_{0\leq i\leq d}\{\la_i(f)\},$$
where $h_{top}(f^{\an})$ is the topological entropy of the holomorphic endomorphism $f^{\an}: X(\C)\to X(\C)$ induced by $f.$
Dinh-Sibony \cite{Dinh2005} showed that the upper bound 
\begin{equation}\label{equrationads}h_{top}(f^\an)\leq \max_{0\leq i\leq d}\{\la_i(f)\}
\end{equation} still holds for arbitrary rational self-maps over $\C.$
However, (\ref{equrationads}) can be strict in general \cite{Guedj2005a}.
Recently Favre, Truong and the author proved (\ref{equrationads}) in the non-archimedean case \cite{Favre2022}.
In the non-archimedean case,  (\ref{equrationads}) can be strict even for endomorphisms \cite{Favre2010,Favre2022}.

 When $\bk=\overline{\Q},$ the dynamical degrees also control the arithmetic complexity of $f$, which is measured by the notion of arithmetic degree (c.f. Section \ref{sectionksc}).
Further, the Kawaguchi-Silverman conjecture (=Conjecture \ref{KSCsurfacelaonela}) asserts that for any point $x\in X(\bk)$ with Zariski dense orbit, the arithmetic degree $\alpha_f(x)$ for $(X,f,x)$ equals $\la_1(f).$ This conjecture has attracted a lot of attention.
For the recent development, see \cite{Matsuzawa2023} and the references therein.
See \cite{Song2023,Luo2023} for the higher arithmetic degrees and their relations to the higher dynamical degrees.
In Section \ref{sectionksc}, we will prove the Kawaguchi-Silverman conjecture for a class of self-maps of projective surfaces including all the birational ones.

When $f$ is an endomorphism, the dynamical degrees control the action of $f^*$ on the cohomology of $X$. 
When $X$ is a smooth projective variety over $\C$, Dinh \cite{Dinh2005c} proved that 
\begin{equation}\label{equalairho}\la_i(f)=\rho(f^*:H^{2i}(X(\C),\R)\to H^{2i}(X(\C),\R))
\end{equation}
where $H^{2i}(X(\C),\R)$ is the singular cohomology of degree $2i$ and $\rho(f^*)$ is the spectral radius of the linear operator $f^*$. 
In positive characteristic, Truong proposed a conjecture saying that (\ref{equalairho}) still holds
if one replaces the singular cohomology by the $\Q_l$-cohomology with $l\neq \Char\, \bk$ \cite{Truong2016}.
This conjecture is wildly open. Indeed, the case for Frobenius endomorphisms implies Deligne's famous theorem for Weil's Riemann hypothesis \cite{Deligne1974}.
However, it was proved by Esnault and Srinivas \cite{Esnault2013} for surface automorphisms and by Truong \cite{Truong2016} for any dominant endomorphisms of smooth projective varieties, that $$\max_{0\leq i\leq d}\la_i(f)=\max_{0\leq i\leq 2d}\rho(f^*: H^{i}(X, \Q_{l})\to H^{i}(X, \Q_{l}))$$  with respect to any field embedding $\Q_l\hookrightarrow \C.$
Truong's proof indeed relies on Deligne's theorem.

%

\subsubsection{Cohomologically hyperbolic self-maps}
The dynamical degree $\la_i(f)$ can be thought as the volume growth rate of a ```random" $i$-dimensional subvariety. So one may think $\la_i(f)/\la_{i-1}(f)$ as the ``speed of the $i$-th fastest direction". For this reason, we introduce the notion of cohomological Lyapunov exponents as the algebraic analogy of the Lyapunov exponents
as follows:
For $i=1,\dots, d$, define the $i$-th \emph{cohomological Lyapunov exponent} of $f$ to be $$\mu_i(f):=\la_i(f)/\la_{i-1}(f).$$
Define $\mu_{d+1}(f):=0$ for convenience. 
As the sequence of dynamical degrees is log-concave \cite{Dinh2005, Truong2020,Dang2020}, the sequence $\mu_i(f), i=1\dots, d$ is decreasing.

\medskip

For $i=1,\dots,d$, we say that $f$ is \emph{$i$-cohomologically hyperbolic} if $\la_i(f)$ is strictly larger than other dynamical degrees i.e. $$\mu_i(f)>1 \text{ and } \mu_{i+1}(f)<1.$$
We say that $f$ is \emph{cohomologically hyperbolic} if it is $i$-cohomologically hyperbolic for some $i=1,\dots,d$, in other words, $\mu_j(f)\neq 1$ for every $j=1,\dots,d.$

Cohomological hyperbolicity can be viewed as a cohomological version of the important notion of hyperbolic dynamics in differentiable dynamical systems. Indeed, when $\bk=\C$, 
very few algebraic dynamical system could be Anosov (which is a strong version of hyperbolicity) c.f. \cite{Ghys1995,Cantat2004,Xu2024}.
However people expect that a cohomologically hyperbolic self-map looks like a hyperbolic map, hence shares some properties of hyperbolic maps.

\subsection{Algorithm to compute the dynamical degrees}
A basic problem is to compute the dynamical degrees to any given precision.
More precisely,
\begin{que}\label{quecompdyna}
 For any given number $l\in \Z_{>0},$  is there an algorithm (that stops in finite time) to compute a number $\tilde{\la}$ such that $\la_i(f)\in (\tilde{\la}, \tilde{\la}+\frac{1}{2^l})$?
\end{que}

Let $L$ be an ample (or big and nef) line  bundle on $X$.
By the definition of the dynamical degree, for $n$ sufficiently large,  we have $$\la_i(f)\in ((\deg_{i,L} f^n)^{1/n}-\frac{1}{2^{l+1}}, (\deg_{i,L} f^n)^{1/n}+\frac{1}{2^{l+1}}).$$
But this does not answer Question \ref{quecompdyna}, as we do not know how large $n$ we need.

\medskip

Question \ref{quecompdyna} is also interesting in cryptography.  See \cite[Section 2]{Shepherd-Barron2021} for interesting discussions.

\subsubsection{Our result}
Strictly speaking, the answer to Question \ref{quecompdyna} depends on the input i.e. 
how we represent $X$ and $f$.
\begin{exe}\label{exehalting} Let $X:=\P^1_{\C}$. Define $f_n: X\to X, n\geq 1$ as follows:
Let $T_n, n\geq 0$ be all the Turing machines. Define $a_n:=0$ if $T_n$ will halt, and $a_n:=1$ if $T_n$ will not halt.
Define $f_n(z):=a_nz^2+z$. As $\la_1(f_n)=\deg f_n$, $\la_1(f_n)=1$ if $T_n$ will halt, and $\la_1(f_n)=2$ if $T_n$ will not halt.
As the Halting problem is unsolvable, there is no algorithm to compute $\la_1(f_n)$ for all $n\geq 0.$
\end{exe}

To describe our input, we need the notion of \emph{mixed degrees}:
Let $L$ be an ample (or big and nef) line bundle on $X$.
Let $s\geq 1$, consider two sequence of non-negative integers: $m_1> \dots >m_s\geq 0$ and $r_1,\dots, r_s\geq 0$ with $\sum_{i=1}^sr_i=d.$
The  mixed degree $(L_{m_1}^{r_1}\cdots L_{m_s}^{r_s})$ is easier to define using the terminology in Section \ref{subsectioncocycles} and \ref{subseclinebun}.
Here we define it in a more direct way.
Let $X'$ be the graph in $X^{s+1}=X\times (X^s)$ of the morphism $X\to X^s$ sending $x$ to $(f^{m_1}(x),\dots,f^{m_s}(x)).$
Let $\pi_i$ be the projection to the $(i+1)$-th factor. 
Then we define $$(L_{m_1}^{r_1}\cdots L_{m_s}^{r_s}):=((\pi_1^*L)^{r_1}\cdots (\pi_s^*L)^{r_s})\in \Z_{>0}.$$

\medskip

We note that $(X,f,L)$ can be defined on a finitely generated field.
The following remark shows that the mixed degrees are computable if we represent $(X,f)$ in a reasonable form.
\begin{rem}\label{remfinitelygef} 
Assume that $\bk$ is a finitely generated field. 
We represent $X$ and $f$ as follows:
Write $\bk$ as $$\bk:=\Frac(\Z[t_1,\dots,t_l]/P)$$ where $P=(G_1,\dots, G_m)$ is a prime ideal of $\Z[t_1,\dots,t_l].$
Write $X$ as the subvariety of $\P^N_\bk$ defined by a homogenous prime ideal $(H_1,\dots,H_s)$. 
The rational map $f: X\dashrightarrow X$ extends to a rational self-map $F: \P^N\dashrightarrow \P^N$ sending $[x_0:\dots: x_N]$ to $[F_0:\dots: F_N]$ where $F_0,\dots, F_N$ are homogenous polynomials of the same degree in $\bk[x_0,\dots, x_N].$

We represent $X,f$ using the following datas as inputs:
\begin{points}
\item  the polynomials $G_1,\dots, G_m$ with integer coefficients; 
\item the polynomials $H_1,\dots,H_s, F_0,\dots, F_N$, whose coefficients are  represented as rational functions in $t_1,\dots,t_l$ with integer coefficients.
\end{points}

In this case,  we may ask $L$ to be the restriction of $\sO_{\P^N}(1)$ to $X.$ For every mixed degree $(L_{m_1}^{r_1}\cdots L_{m_s}^{r_s})$, there is an algorithm 
compute its exact value. So our assumption is satisfied and we may use  Theorem \ref{thmcomputela} to compute the dynamical degrees of $f.$
\end{rem}

In Section \ref{sectionalgdeg},
we affirmatively answer Question \ref{quecompdyna}, under the assumption that all the mixed degrees $(L_{m_1}^{r_1}\cdots L_{m_s}^{r_s})$ are computable.
\begin{thm}\label{thmcomputela} For any given number $l\in \Z_{>0},$ there is an explicit algorithm to output numbers $\widetilde{\la_i}, i=0,\dots, d$ such that $\la_i\in (\widetilde{\la_i}, \widetilde{\la_i}+\frac{1}{2^l})$, using finitely many mixed degrees.
\end{thm}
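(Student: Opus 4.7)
\medskip
\noindent
\textbf{Proof plan for Theorem \ref{thmcomputela}.}

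The strategy is to enclose $\la_i(f)$ by two computable sequences of bounds $\underline{\la}^{(n)} \leq \la_i(f) \leq \overline{\la}^{(n)}$, each defined in terms of finitely many mixed degrees $(L_{m_1}^{r_1}\cdots L_{m_s}^{r_s})$, and each provably converging to $\la_i(f)$ at a \emph{computable} rate. The algorithm then increments $n$, evaluates the bounds, and halts as soon as $\overline{\la}^{(n)} - \underline{\la}^{(n)} < 1/2^l$, outputting $\underline{\la}^{(n)}$. The heart of the matter is the \emph{computability of the rate} --- the naive convergence $(\deg_{i,L} f^n)^{1/n} \to \la_i(f)$ is useless for an algorithm because, as Example \ref{exehalting} shows, no purely asymptotic bound can answer Question \ref{quecompdyna}.

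First I would establish the upper bound. Standard Siu/Khovanskii--Teissier-type inequalities on a common model of $f^m$ and $f^n$ yield a submultiplicativity of the form
\[
 \deg_{i,L}(f^{m+n}) \leq C_{i,L}\,\deg_{i,L}(f^m)\,\deg_{i,L}(f^n),
\]
where $C_{i,L}$ is an intersection constant depending only on $L$ and $X$ (hence explicitly computable from mixed degrees of $L$ alone). Fekete's lemma applied to $C_{i,L}\deg_{i,L}(f^n)$ gives
\[
 \la_i(f) \leq C_{i,L}^{1/n}\bigl(\deg_{i,L}(f^n)\bigr)^{1/n} =: \overline{\la}^{(n)},
\]
which is a computable upper bound that decreases to $\la_i(f)$.

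The core of the proof is the lower bound. Here I would exploit the paper's promised family of \emph{recursive inequalities} on dynamically meaningful cycles: using mixed degrees of the form $(L_{m_1}^{r_1}\cdots L_{m_s}^{r_s})$ one derives, by Hodge-index/Khovanskii--Teissier manipulations on a single resolution of the graph of $(f^{m_1},\dots,f^{m_s})$, inequalities that go \emph{in the opposite direction} to submultiplicativity. Concretely, the aim is a bound of the shape
\[
 \deg_{i,L}(f^{m+n}) \geq D_{i,L}^{-1}\,\Phi_{m,n}(\text{mixed degrees}),
\]
from which one extracts an estimate $\la_i(f) \geq \underline{\la}^{(n)}$ with $\overline{\la}^{(n)} - \underline{\la}^{(n)} = O(1/n)$, where the implicit constant is computable from $C_{i,L}, D_{i,L}$ and from the first few terms of the degree sequence. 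This is precisely where the abstract's claim that ``after a large iterate, every degree sequence grows almost at a uniform rate'' enters: that property, made quantitative, converts the asymptotic equality $\lim (\deg_{i,L}f^n)^{1/n} = \la_i(f)$ into an effective one.

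The main obstacle is therefore not the algorithmic packaging but the \emph{quantitative} lower bound on $\deg_{i,L}(f^n)$. Submultiplicativity is cheap; its converse is not, and on general singular birational models the error terms one picks up in an intersection-theoretic comparison of $(f^{m+n})^{*}L$ with $(f^{m})^{*}(f^{n})^{*}L$ are a priori uncontrolled. Overcoming this is the role of the recursive inequalities: by tracking several mixed degrees simultaneously one can absorb the error terms into other mixed degrees, yielding a closed system whose solutions give effective two-sided control on $\la_i(f)$. Once that system is in hand, producing the algorithm, proving it terminates, and bounding the required $n$ in terms of $l$ and of the input data is a routine check.
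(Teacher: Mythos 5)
There is a genuine gap, and it sits exactly at the point you call ``a routine check.'' Your upper bound is the same as the paper's (submultiplicativity via Siu's inequality plus Fekete), but your lower bound rests on the claim that one can extract $\underline{\la}^{(n)}$ with $\overline{\la}^{(n)}-\underline{\la}^{(n)}=O(1/n)$ where the implicit constant is \emph{computable} from intersection constants and the first few degrees. Nothing in your sketch justifies this, and it is far stronger than what the recursive-inequality machinery yields: the inequalities of Theorem \ref{thmmaincompudegany}/\ref{thmmaincompudegone} and the uniform-growth statement (Corollary \ref{cordegreestableint}) only hold for $m\geq m_\epsilon$, where $m_\epsilon$ depends on the unknown dynamical degrees through non-effective asymptotic comparisons (the constants $D_\delta$ in Corollary \ref{cormixddyna} come from the very convergence $(\deg_i f^n)^{1/n}\to\la_i$ whose rate is the thing you do not know). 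An effective $O(1/n)$ gap would tell you in advance how many mixed degrees to compute, i.e.\ it would bound the running time in terms of $l$ and the input; the paper explicitly cannot do this and poses the existence of such an efficient algorithm as an open question. Note also that a lower bound on $\deg_i(f^{m+n})$ for finitely many $n$ gives, by itself, no lower bound on the limit $\la_i$: the degrees could still collapse later, so you need a mechanism by which a finite computation certifies an inequality valid for \emph{all} iterates.

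The paper's actual route supplies exactly that mechanism and avoids any computable rate. It formulates conditions $I_i,J_i,K_i(\alpha_1,\dots,\alpha_d;\gamma;\epsilon;m;N)$ that are decidable from finitely many intersection numbers among $L, L_m, L_{2m}, L_{mN}, L_{m(N+1)}$; when they hold, the bigness certificate pulls back under $(f^n)^*$ to give a recursive inequality for the whole degree sequence, and Lemma \ref{lemrecuineq} then yields $\la_i\geq \epsilon^{2i}\beta_i$ (Lemma \ref{lemcondiimpliesdyna}, Theorem \ref{thmkeyestlowb}). The algorithm enumerates the countably many rational parameter tuples, checks for each one the finitely many mixed degrees needed, and also checks that the submultiplicative upper bound $(\tbinom{d}{i}(L^d)^{-1}\deg_i f^n)^{1/n}$ is within $1/2^l$ of the certified lower bound; termination is guaranteed because suitable parameters exist (Lemma \ref{lemrecuinequ}) and the upper bound converges, but no a priori bound on the search length is claimed. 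So your overall architecture (two-sided computable bounds, halt when the gap is small) is compatible with the paper, but you must replace the unjustified effective rate by a verifiable finite certificate for the lower bound together with an unbounded search whose termination is proved non-effectively; as written, the central quantitative claim of your proposal is unsupported and very likely unprovable by these methods.
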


\subsubsection{Previous results}
Here we ignore the difficulty from the computability theory as in Example \ref{exehalting}, and we assume that $(X,f)$ is represented in a reasonable form.

\medskip

There are plenty of works concerning the computation of dynamical degrees in special cases.

If $X$ is a smooth projective variety and $f: X\to X$ is an endomorphism, we have that $$\la_i(f)=\rho(f^*: N^i(X)_{\R}\to N^i(X)_{\R})$$ where $N^i(X)_{\R}$ is the $\R$-vector space spanned by the numerical classes of $i$-cocycles of $X.$ As $N^i(X)_{\R}$ is a finite dimensional vector space,  the sequence $\deg_{i,L} f^n, n\geq 0$ satisfies a linear recursive equation of order $\leq \dim_{\R}N^i(X)_{\R}.$ In particular, $\la_i(f)$ is an algebraic integer of degree $\leq \dim_{\R}N^i(X)_{\R}.$
In this case, $\la_i(f)$ should be computable for a given $(X,f)$.   
On the other hand, there is a lot of  interesting works on constructing examples of endomorphisms (especially automorphisms) having certain properties on the dynamical degrees, e.g. \cite{Cantat1999,McMullen2002,McMullen2007,McMullen2011,McMullen2016,Dolgachev2018,Oguiso2010,Oguiso2014,Cantat2015,Oguiso2009,Oguiso2015,Oguiso2020,Uehara2016,Reschke2017,Lesieutre2021a}.

When $f$ is merely rational, most of the previous work focus on the first dynamical degree $\la_1(f)$.
In \cite{Sibony1999}, Sibony introduced the important notion of algebraically stable maps. 
If $(X,f)$ is algebraically stable, as in the endomorphism case, we still have 
$$\la_1(f)=\rho(f^*: N^1(X)\to N^1(X)),$$ and one can compute $\la_1(f)$ using linear algebra. 
In most of the works, the strategy to compute $\la_1(f)$ is to construct a birational model $(X',f')$ of $(X,f)$ for which $(X',f')$ is algebraicaly stable.
For certain classes of maps, such as birational self-maps of surfaces \cite{favre} or endomorphism of $\A^2$ \cite{Favre2007,Favre2011}, we can find such birational models, after a suitable iterate. 
On the other hand, it was proved by Favre \cite{Favre2003} that algebraically stable model may not exist even for monomial self-maps on $\P^2$.
However, the dynamical degrees $\la_i(f), i=0,\dots, d$ are computed for monomial self-maps on $\P^N$ by Favre-Wulcan and Lin \cite{Favre2012,Lin2012}. 
Dinh and Sibony \cite{Tien-CuongDinh} computed the dynamical degrees for automorphisms $f : \mathbb{A}^m \to \mathbb{A}^m$ on complex affine spaces that are regular (i.e. the indeterminacy loci of $f$ and its inverse $f^{-1}$ are disjoint on the hyperplane at infinity in $\mathbb{P}^m$).
As far as we know, these are the only non-trivial cases for which higher dynamical degrees are computed.  See \cite{Bedford2004,Bedford2008,Nguyen2006,AnglesdAuriac2006,Abarenkova1999,Bellon1999,M.Henneaux1997,favre,Favre2011,Bell2020a,Dang2021,Bell2023} and the reference therein for more related works.

In the above cases, the dynamical degrees are always algebraic. Moreover, as $(X,f)$ can be defined on a finitely generated field, the set of all possible values of dynamical degrees are countable \cite{Bonifant2000,Urech}.
However, in the breakthrough work \cite{Bell2020a}, Bell, Diller and Jonsson give examples of rational self-maps of $\P^2_{\C}$ whose first dynamical degrees are transcendental numbers. In their examples, $\la_1(f)$ is the unique positive real zero of certain transcendental power series.  So we do not expect the existence of an algorithm (stoping in finite time) computing the exact value of $\la_i(f)$ in  general.

\subsubsection{The surface case}
In \cite[Key Lemma]{Xie2015}, the author proved the following result.\footnote{In \cite[Key Lemma]{Xie2015}, the result is stated only for birational self-maps. However, its proof indeed works for any dominant rational self-map,  replacing all $f^*$ by $\frac{f^*}{\la_2^{1/2}}$.} 
\begin{thm}\label{thmxiedukeklin}
Let $\bk$ be a field. Let $X$ be a projective surface over $\bk.$ Let $f: X\dashrightarrow X$ be a dominant rational self-map. Let $L$ be a big and nef line bundle. Then we have $$\la_1(f)\geq \frac{\deg_{1,L} f^2}{2^{\frac{1}{2}}\times 3^{18}\deg_{1,L} f}.$$
\end{thm}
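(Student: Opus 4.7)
The plan is to rewrite the claim as a bounded-distortion statement for the degree sequence $a_n := \deg_{1,L} f^n$: it suffices to establish the descent inequality
$$a_{n+1} \cdot a_1 \;\geq\; \frac{1}{C}\, a_n \cdot a_2 \qquad (\ast)$$
for every $n \geq 1$, with $C = 2^{1/2}\cdot 3^{18}$. Granted $(\ast)$, iterating gives $a_n \geq a_2 \cdot (a_2/(Ca_1))^{n-2}$ for $n \geq 2$, and taking $n$-th roots as $n \to \infty$ yields $\lambda_1(f) \geq a_2/(C a_1) = \deg_{1,L} f^2 / (C \deg_{1,L} f)$, which is the theorem.

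To prove $(\ast)$, I would work in the Picard--Manin-type space $\widetilde{\Pic}(X)$ on which the pull-back $f^*$ is a well-defined linear operator satisfying $(fg)^* = g^* f^*$. Restricted to the surface setting, the intersection pairing on $\widetilde{\Pic}(X)$ is Lorentzian, and $f^*/\lambda_2(f)^{1/2}$ acts as an isometry (for birational $f$ one has $\lambda_2 = 1$ and no normalization is needed, which is why the factor $2^{1/2}$ suffices in the birational formulation of \cite{Xie2015}). The inequality $(\ast)$ then translates to a quadratic relation among the mixed intersection numbers of $L$, $f^* L$, $(f^n)^* L$, and $(f^{n+1})^* L$. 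Two tools combine to prove it: Siu's inequality in dimension two (Theorem \ref{thmsiuin}) furnishes a nef-plus-effective decomposition of each $(f^k)^* L$ in terms of $L$ with combinatorial coefficients bounded in surface dimension, accounting for the factor $3^{18}$; and the Hodge index theorem on the surface yields a Cauchy--Schwarz-type inequality $(L \cdot D)^2 \geq (L^2)(D^2)$ in the Lorentzian form, accounting for the factor $2^{1/2}$ via a single square-root extraction. Since $(\ast)$ is homogeneous of degree two in $f^*$ on both sides, the $\lambda_2(f)^{1/2}$ normalization leaves the inequality itself unchanged, allowing a single argument to cover both the birational and the general dominant rational case.

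The main technical obstacle is the non-multiplicativity of pull-back for rational self-maps: on any concrete birational model $\pi : \tilde X \to X$, the classes $(f^{n+1})^*\pi^* L$ and $f^*((f^n)^*\pi^* L)$ differ by effective divisors supported on the exceptional loci of successive resolutions of indeterminacy. The passage to $\widetilde{\Pic}(X)$ absorbs this structurally, but tracking these exceptional corrections through the mixed-degree intersection numbers so that an \emph{absolute} constant $C$ survives uniformly in $n$ and in $f$ is the heart of the proof. The universality of $C = 2^{1/2} \cdot 3^{18}$, independent of $f$ and $L$, reflects the fact that only the surface-level Siu inequality and the Hodge-index inequality enter, both of which are statement-level absolute in dimension two.
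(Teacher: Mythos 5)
Your reduction is fine as far as it goes: if the one-step descent inequality $(\ast)$, $a_{n+1}a_1\geq C^{-1}a_na_2$ for all $n$, held, then iterating and taking $n$-th roots would indeed give the theorem, and $(\ast)$ is in fact true (it even follows, with the better constant $C=4$, from the paper's stronger Theorem \ref{thmsurfacebound}). The genuine gap is that you never prove $(\ast)$: the paragraph asserting that Siu's inequality ``accounts for $3^{18}$'' and the Hodge index theorem ``accounts for $2^{1/2}$'' is an attribution of constants, not an argument, and as described it cannot work. Siu-type estimates (Theorem \ref{thmsiuin}, Lemma \ref{lemsiucomp}) bound mixed degrees from \emph{above} --- they give the submultiplicativity $a_{m+n}\leq \frac{2}{(L^2)}a_ma_n$, i.e.\ the wrong direction --- while the class $L_2-\frac{Q}{2}L_1$ (with $Q=a_2/a_1$) is in general not pseudo-effective, so no positivity statement yields a two-term lower bound at a single fixed $n$ in isolation. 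What the paper actually does (for the stronger bound) is prove bigness of the corrected class $L_2+\frac{Q^2}{16}L-\frac{Q}{2}L_1$, using one application of Theorem \ref{thmsiuincodimo} together with $(L_2\cdot L_1)=\la_2\deg_1 f$ and the reduction to $Q/4\geq\la_2^{1/2}$; pulling this back by $(f^n)^*$ and intersecting with $L$ gives only the three-term recursion $a_{n+2}+\frac{Q^2}{16}a_n\geq\frac{Q}{2}a_{n+1}$, and the descent $a_{n+2}>\frac{Q}{4}a_{n+1}$ emerges from that recursion \emph{combined with} the initial condition $a_1>\frac{Q}{4}(L^2)$ (checked via submultiplicativity) through the telescoping of Lemma \ref{lemrecuineq}. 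This mechanism --- the extra $L$-term, the resulting three-term recursion, and the initial-condition induction --- is precisely what is missing from your sketch; you yourself call the uniform-in-$n$ tracking ``the heart of the proof,'' but you do not supply it.

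Moreover, the route you gesture at (the isometric action of $f^*/\la_2^{1/2}$ on the infinite-dimensional hyperbolic Picard--Manin space) is the route of \cite{Xie2015}, where the constant $2^{\frac{1}{2}}\times 3^{18}$ arises from explicit displacement and translation-length estimates for isometries of that hyperbolic space, not from a combination of Siu's inequality and the Hodge index theorem; none of those hyperbolic-geometry estimates appear in your proposal either. So neither the paper's new recursive-inequality argument nor the original argument of \cite{Xie2015} is actually reproduced, and the key inequality $(\ast)$ remains unproved.
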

We will see, in Section \ref{subsecdimtwo}, that this  indeed implies a positive answer of Question \ref{quecompdyna} for rational self-maps of surfaces.

The proof of Theorem \ref{thmxiedukeklin} relies on the theory of hyperbolic geometry and the natural linear action of $f$ on a suitable hyperbolic space of infinite dimension. This space is constructed as a set of cohomology classes in the Riemann-Zariski space of $X$ and was introduced by Cantat \cite{Cantat2007}. Unfortunately, such a space can only  be constructed in dimension two. Also the coefficient $2^{\frac{1}{2}}\times 3^{18}$ is quite large. 
In this paper, we give a new proof of Theorem \ref{thmxiedukeklin} with a better coefficient i.e. from $2^{\frac{1}{2}}\times 3^{18}$ to $4$.
\begin{thm}\label{thmsurfaceboundin}Let $\bk$ be a field. Let $X$ be a projective surface over $\bk.$ Let $f: X\dashrightarrow X$ be a dominant rational self-map. Let $L$ be a big and nef line bundle. Then we have $$\la_1(f)\geq \frac{\deg_{1,L} f^2}{4\deg_{1,L} f}.$$
\end{thm}
The proof of Theorem \ref{thmsurfaceboundin} does not rely on hyperbolic geometry and is much simpler (c.f. Section \ref{subsecdimtwo}).

\subsection{Lower semi-continuity of dynamical degrees}
Besides the the dynamical degrees of a single map, we also study the behavior of the dynamical degree in families.

Let $S$ be an integral noetherian scheme and $d\in \Z_{\geq 0}.$
Initially, a family of self-maps on $S$ should be a collection of dominant rational self-maps $f_p: X_p\dashrightarrow X_p, p\in S$ on varieties $p\in S.$ 
So we introduce the following definition.
\begin{defi}A \emph{family of $d$-dimensional dominant rational self-maps on $S$} is a flat and projective scheme $\pi:\sX\to S$ satisfying $\dim \sX/S=d$ with  a dominant rational self-map
$f: \sX\dashrightarrow \sX$ over $S$ such that for every $p\in S$,
\begin{points}
\item  the fiber $X_p$ of $\pi$ at $p$ is geometrically reduced and irreducible;
\item  $X_p\not\subseteq I(f)$;
\item  the induced map $f_p: X_p\dashrightarrow X_p$ is dominant. 
\end{points}
\end{defi}


\medskip

We prove the following result in Section \ref{sectionsslsc}.
\begin{thm}\label{thmlscdynadegin}
Let $S$ be an integral noetherian scheme and $\pi:\sX\to S$ be a flat and projective scheme over $S$ with $\dim \sX/S=d$. 
Let $f: \sX\dashrightarrow \sX$ be a family of $d$-dimensional dominant rational self-maps on $S$. Then
for every $i=0,\dots,d$, the function $p\in S\mapsto \lambda_i(f_p)$ is lower semi-continuous in the Zariski topology on $S$.
\end{thm}

A special case of Theorem \ref{thmlscdynadeg} is the following result.
\begin{cor}\label{corlscdynadegoverzin}
Let $f: \P^d_{\Z}\dashrightarrow \P^d_{\Z}$ be a dominant rational self-map over $\Z$. 
Then for every $i=0,\dots,d$, we have 
$$\la_i(f\otimes_{\Z}\Q)=\lim_{p \text{ prime}, p\to \infty} \la_i(f_p).$$
\end{cor}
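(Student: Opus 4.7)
The plan is to sandwich $\lim_{p\to\infty}\lambda_i(f_p)$ between $\lambda_i(f\times_\Z\Q)$ from below, via Theorem \ref{thmlscdynadegin}, and from above via Siu's inequality applied with a constant independent of $p$. First I would put the data in the framework of Theorem \ref{thmlscdynadegin}: taking $S=\Spec\Z$ and $\mathcal{X}=\P^d_\Z$, the generic fiber corresponds to $\Q$, and the map $f$ restricts to a well-defined dominant rational self-map $f_p$ of $\P^d_{\F_p}$ for all primes $p$ outside some finite set of bad primes. Removing these yields a non-empty Zariski open $U\subseteq S$ on which $(\pi\colon \mathcal{X}_U\to U,f)$ satisfies the definition of a family of $d$-dimensional dominant rational self-maps. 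By Theorem \ref{thmlscdynadegin}, $p\mapsto \lambda_i(f_p)$ is then lower semi-continuous on $U$. Since the generic point $\eta$ of $S$ lies in every non-empty open subset, for every $\varepsilon>0$ the set $\{p\in U:\lambda_i(f_p)>\lambda_i(f_\eta)-\varepsilon\}$ is open and non-empty, hence cofinite in $U$, giving
\[
\liminf_{p\to\infty}\lambda_i(f_p)\geq \lambda_i(f\times_\Z\Q).
\]

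For the matching upper bound, I would take $L=\mathcal{O}_{\P^d}(1)$, so $(L^d)=1$, and work with the graph $\mathcal{X}_n'\subseteq\mathcal{X}\times_S\mathcal{X}$ of $f^n$, defined as the Zariski closure of the graph of the generic-fiber map $f^n_\eta$. Since $\mathcal{X}_n'$ is integral, generic flatness together with the constancy of fiber dimensions implies that for each fixed $n$ the fiber $(\mathcal{X}_n')_p$ coincides with the graph of $f_p^n$ for all but finitely many $p$; computing the mixed intersection number on these fibers then shows
\[
\deg_{i,L_p}(f_p^n)=\deg_{i,L_\Q}(f^n_\Q) \quad \text{for all but finitely many } p,
\]
with the exceptional set depending on $n$. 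By Siu's inequality, as recalled in the strategy of proof of Theorem \ref{thmcomputela}, $\lambda_i(g)\leq (\binom{d}{i}/(L^d)\cdot \deg_{i,L}(g^n))^{1/n}$, a bound involving only $d$, $i$ and $(L^d)=1$, and hence uniform in $p$. Fixing $n$ and taking $\limsup$ in $p$, then letting $n\to\infty$, one obtains $\limsup_{p\to\infty}\lambda_i(f_p)\leq \lambda_i(f\times_\Z\Q)$, closing the sandwich.

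The main obstacle is the identification $(\mathcal{X}_n')_p=\mathrm{graph}(f_p^n)$ for almost all $p$; one must check carefully that the Zariski closure of the generic graph actually specializes to the correct graph fiberwise, which is a concrete application of generic flatness and the irreducibility of $\mathcal{X}_n'$, combined with the fact that $f_p^n$ remains dominant for almost all $p$. Once this is granted, the corollary is the direct combination of Theorem \ref{thmlscdynadegin} (lower bound at the generic point of $\Spec\Z$) and Siu's inequality with a $p$-independent constant (upper bound).
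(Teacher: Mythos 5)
Your argument is correct, and its lower half is exactly the paper's: put the map into a family over a cofinite open $U\subseteq \Spec\Z$, apply Theorem \ref{thmlscdynadegin}, and use that a non-empty open subset of $U$ containing the generic point is cofinite, giving $\liminf_{p}\la_i(f_p)\geq \la_i(f\times_\Z\Q)$. Where you diverge is the upper bound: the paper gets it for free from lower semi-continuity itself, since every prime $p$ lies in the closure of the generic point $\eta$, so lower semi-continuity forces $\la_i(f_p)\leq \la_i(f_\eta)$ (this is exactly condition (i) of Lemma \ref{lemlsconneoth}, and in the proof of Theorem \ref{thmlscdynadeg} it is item (i), deduced from Lemma \ref{lowersemicontinuous}, which gives $\deg_{i,L_p}(f_p^n)\leq \deg_{i,L_\kappa}(f_\kappa^n)$ for \emph{all} $p$ and all $n$, with no exceptional primes). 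You instead re-derive a degree-specialization statement by hand (graph closure, flatness, constancy of intersection numbers — essentially a special case of Lemma \ref{lowersemicontinuous}), and because your exceptional set of primes depends on $n$ you are forced to insert the uniform bound $\la_i(f_p)\leq \bigl(\binom{d}{i}\deg_{i,L_p}(f_p^n)\bigr)^{1/n}$ from Fact \ref{factlaabove} before letting $n\to\infty$. This works and is a legitimate alternative, but it is strictly more labor than needed: quoting the specialization inequality built into the lower semi-continuity statement (or Lemma \ref{lowersemicontinuous} directly, which holds for every $p$) removes both the Siu/Fekete step and the bookkeeping of $n$-dependent bad primes, which is the one point in your write-up that genuinely requires the careful spreading-out argument you flag as the main obstacle.
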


Theorem \ref{thmlscdynadegin} generalizes \cite[Theorem 4.3]{Xie2015} from dimension two to any dimension.
The special case where $i=1$ and $\sX=\P^N_S$ of Theorem \ref{thmlscdynadegin} implies Call-Silverman's conjecture \cite[Conjecture 1]{Silverman2018} and its generalized version \cite[Conjecture 14.13]{Benedetto2019}. Corollary \ref{corlscdynadegoverzin} gives a positive answer to \cite[Question 14.10]{Benedetto2019}.

In \cite[Section 4.3]{Xie2015}, the author  provided the following example showing that Corollary \ref{corlscdynadegoverzin} cannot be strengthened to the statement that 
$\la_i(f\otimes_{\Z}\Q)=\la_i(f_p)$ for infinitely many prime $p$.
\exe
Let $f: \P^2_{\Z}\dashrightarrow \P^2_{\Z}$ be the rational self-map sending $[x:y:z]$ to $[xy: xy-2z^2: yz+3z^2].$
 Then $\la_1(f\otimes_{\Z}\Q)=2$, but $\la_1(f_p)<2$ for all primes $p.$
\endexe

\subsubsection{Strategy of the proof}
There are three steps in the proof. In the first step, we get a simple criterion for lower semi-continuity functions on noetherian schemes (c.f. Lemma \ref{lemlsconneoth}).
Next we show that the mixed degrees are lower semi-continuous (c.f. Lemma \ref{lowersemicontinuous}). 
This step can be shown using our criterion Lemma \ref{lemlsconneoth}, Raynaud-Gruson flattening theorem \cite[Theorem 5.2.2]{Raynaud1971}, and the constancy of intersection numbers on flat families \cite[Proposition 10.2]{Fulton1984}. 
The function $p\in S\mapsto \lambda_i(f_p)$ of the $i$-th dynamical degree is the point-wise limit of the functions $p\in S\mapsto (\deg_{i,L_p}f_p)^{1/n}$.
By the second step, the later function is lower semi-continuous.
However, as shown in Remark \ref{remlimlsc}, limit of lower semi-continuous functions may not be lower semi-continuous.
To complete the proof of Theorem \ref{thmlscdynadeg}, we need to show that the dynamical degrees are continuous at the generic point of $S$.
In this step, the main ingredient is the lower bounds of the dynamical degrees obtained in Section \ref{secrecurdeg}.

\subsection{Periodic points}

One of the most basic problem in algebraic dynamics is to determine when the set of periodic points is Zariski dense. 
\begin{que}\label{quewhenperzar}
Under which condition does $f$ admit  a Zariski dense set of periodic points?
\end{que}

We give a positive answer to Question \ref{quewhenperzar} for cohomologically hyperbolic self-maps in Section \ref{sectioncohhyp}.

\begin{thm}\label{thmperiodisodenseint} If $f$ is cohomologically hyperbolic i.e. there is a unique $i\in \{1,\dots, d\}$ such that $\la_i(f)=\max_{j=0,\dots, d}\la_j(f),$ then the set of periodic closed points is Zariski dense. 
\end{thm}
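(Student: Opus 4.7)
The strategy is the classical intersection-theoretic count of fixed points on $X\times X$: one shows that the number of isolated fixed points of $f^n$, counted with scheme-theoretic multiplicity, grows like $\lambda_i(f)^n$, while the number that can be concentrated on any proper invariant subvariety grows strictly slower. Both bounds are driven by the recursive inequalities of Section~\ref{secrecurdeg}.

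First I would realize $\#\Fix(f^n)$ as an intersection number between the diagonal $\Delta\subset X\times X$ and the graph $\Gamma_{f^n}$, passing to a suitable birational model that resolves indeterminacy for each relevant $n$. A K\"unneth-type expansion of $[\Delta]$ with respect to the polarization $p_1^*L+p_2^*L$ on $X\times X$ writes $([\Delta]\cdot[\Gamma_{f^n}])$ as a positive combination of the mixed degrees $(L_0^{d-k}L_n^k)$ introduced in Section~\ref{subsecdynamicaldeg}. Applying the recursive inequalities of Section~\ref{secrecurdeg} yields an effective lower bound $(L_0^{d-i}L_n^i)\ge c\,\lambda_i(f)^n$; since $i$ is the \emph{unique} index realising the maximal dynamical degree, log-concavity of the sequence $\{\lambda_k\}$ implies that this term dominates the expansion, whence
\[
([\Delta]\cdot[\Gamma_{f^n}])\;\ge\;c'\,\lambda_i(f)^n
\]
for some $c'>0$ and all large $n$.

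Next, suppose for contradiction that $Z:=\overline{\Per(f)}^{\zar}\subsetneq X$, and decompose $Z=\bigcup_\alpha Z_\alpha$ into irreducible components. Since $\Per(f)$ is forward-invariant, each $Z_\alpha$ is (weakly) $f$-invariant, so one can run the same intersection calculus on $(Z_\alpha,f|_{Z_\alpha})$ to bound the number of fixed points of $f^n$ in $Z_\alpha$ by $C_\alpha\cdot\bigl(\max_{j\le\dim Z_\alpha}\lambda_j(f|_{Z_\alpha})\bigr)^n$. Combining the restriction inequality $\lambda_j(f|_{Z_\alpha})\le\lambda_j(f)$ (a consequence of the product formula for relative dynamical degrees recalled in Section~\ref{subsecdynamicaldeg}) with the cohomological-hyperbolicity hypothesis $\lambda_i>\lambda_{i\pm1}$ and $\dim Z_\alpha<d$, and then invoking the quantitative recursive inequalities applied to $f|_{Z_\alpha}$, one extracts the \emph{strict} bound $\max_j\lambda_j(f|_{Z_\alpha})<\lambda_i(f)$. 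Summing over the finitely many components $\alpha$ contradicts the lower bound of the previous step for $n$ large, so $\overline{\Per(f)}^{\zar}=X$.

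The expected main obstacle is the strict inequality in the third step. The soft bound $\lambda_j(f|_Z)\le\lambda_j(f)$ is not sharp enough: when $i<d$ one could a priori have $\lambda_i(f|_Z)=\lambda_i(f)$ for a divisor $Z$. Extracting the strict drop requires a quantitative control on how the restriction of mixed degrees to a proper subvariety must acquire an extra codimension-one factor through the polarization, and this is precisely what the recursive-inequality framework of Section~\ref{secrecurdeg} is designed to deliver; a purely asymptotic or semicontinuous argument does not suffice here. A secondary technical point is tracking indeterminacy and absorbing any positive-dimensional components of $\Delta\cap\Gamma_{f^n}$ into an error term of order $o(\lambda_i(f)^n)$, which is handled by passing to a common birational model and using the excess-intersection formula.
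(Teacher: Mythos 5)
Your proposal does not follow the paper's route, and as it stands it has two genuine gaps, either of which is fatal. First, the intersection-theoretic count. For a merely rational map, the number $([\Delta]\cdot[\Gamma_{f^n}])$ does not bound from below the number of actual periodic points: the scheme $\Delta\cap\Gamma_{f^n}$ typically has positive-dimensional components and, worse, components sitting over the indeterminacy loci of the iterates, and these can carry essentially all of the intersection number. Absorbing them into an error term of size $o(\lambda_i(f)^n)$ is not a ``secondary technical point'' handled by the excess-intersection formula; it is precisely the open difficulty that blocks a Lefschetz-type proof for rational maps (this is why even over $\C$ the known results of Dinh--Nguy\^en--Truong, Diller--Dujardin--Guedj etc.\ require delicate analytic control, and why the paper avoids fixed-point counting altogether). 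In addition, your K\"unneth expansion of $[\Delta]$ is not a \emph{positive} combination of the classes pairing with $(L_0^{d-k}L_n^k)$, so even the clean lower bound $([\Delta]\cdot[\Gamma_{f^n}])\geq c'\lambda_i(f)^n$ is unjustified; the recursive inequalities of Section~\ref{secrecurdeg} give growth of mixed degrees, not a trace-formula domination of the diagonal class.

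Second, the contradiction step. Even granting a lower bound on the ambient intersection number, bounding the number of periodic points lying on a component $Z_\alpha$ of $\overline{\Per(f)}$ does not contradict it, because a single fixed point on $Z_\alpha$ can contribute arbitrarily large local multiplicity to $(\Delta\cdot\Gamma_{f^n})$ in $X\times X$; you are comparing an ambient multiplicity count with an intrinsic point count on $Z_\alpha$. Moreover the key strict inequality $\max_j\lambda_j(f|_{Z_\alpha})<\lambda_i(f)$ is exactly the assertion you would need to prove, and nothing in Section~\ref{secrecurdeg} delivers it: those inequalities concern degree growth on $X$ itself (and, in Lemma~\ref{lemcurvedegregrowth}, growth of $(L_n\cdot C)$ for curves with well-defined orbits), not dynamical degrees of restrictions to invariant subvarieties; one would also have to check that $f|_{Z_\alpha}$ is a well-defined dominant self-map at all. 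The paper's actual proof is entirely different: it spreads $(X,f)$ out over a finitely generated ring, uses the lower semicontinuity of dynamical degrees (Theorem~\ref{thmlscdynadeg}) to find a closed point where the reduction is still cohomologically hyperbolic, invokes Hrushovski's twisted Lang--Weil estimate (Proposition~\ref{Fakhruddin 5.5}) to get Zariski-dense periodic points over $\overline{\F}_p$, uses the recursive inequality (Theorem~\ref{thmmaincompudegone} via Lemma~\ref{lemcurvedegregrowth} and Corollary~\ref{corisoperiodiccohhyp}) only to show those periodic points are \emph{isolated}, and then lifts isolated periodic points to the generic fiber by Lemma~\ref{dvr}, inducting on the transcendence degree. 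I would redirect your effort toward that reduction-mod-$p$ scheme rather than trying to repair the fixed-point count.
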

We indeed prove a stronger statement in Theorem \ref{thmperiodisodense}.

\medskip

When $f$ is not cohomologically hyperbolic, the answer to Question \ref{quewhenperzar} can be either positive or negative.
Indeed, in Section \ref{subsectionnonhyp}, we give examples to show that for cohomologically non-hyperbolic maps, one can not determine whether the set of periodic points are Zariski dense from their dynamical degrees.

\subsubsection{Historical notes}
The first fundamental result for Question \ref{quewhenperzar} is the 
positive answer for polarized endomorphisms\footnote{Recall that an endomorphism $f$ on a projective variety $X$ is said to be {\em polarized} if there exists an ample line bundle $L$ on $X$ satisfying $f^*L=qL$ for some integer $q\geq 2$.  In this case, $\la_i(f)=q^i$ for $i=0,\dots, d$. Hence polarized endomorphisms are cohomologically hyperbolic.},
which can be achieved both by analytic and algebraic method. 

\medskip

Suppose that $X$ is smooth projective over $\C$ and $f$ is a  polarized endomorphism.
Using complex analytic methods,
 Briend-Duval~\cite{Briend2001} and subsequently Dinh-Sibony
\cite{Dinh2010} have proved that the set of periodic points is Zariski dense in $X$. By the Lefschetz principle, these results hold true whenever $\bk$ has characteristic zero.
Later, Hrushovski and Fakhruddin \cite{fa} gave a purely algebraic proof of the Zariski density of periodic points over any algebraically closed field\footnote{Their proof indeed works for amplified endomorphisms which are more general than polarized ones.}.

\medskip

The complex analytic methods alluded to above have been used to give a positive answer to Question~\ref{quewhenperzar} for several other classes of maps. For example, building on the work of Guedj \cite{Guedj2005},  Dinh, Nguy\^en and Truong \cite{Tien-CuongDinha} proved it when $f$ is $(\dim X)$-cohomologically hyperbolic.
See \cite{Tien-CuongDinh,Bedford2005,Dujardin2006,Diller2010,Bedford1993,Jonsson2018}  for other cases obtained using complex analytic method.
All these cases are cohomologically hyperbolic, hence implied by our Theorem \ref{thmperiodisodenseint}.
However, when the complex analytic method works, usually it not only proves the Zariski density of periodic points, but also show that the periodic points equidistribute to the maximal entropy measure in the complex topology.



\medskip

In \cite[Theorem 1.1]{Xie2015}, the author classified the birational self-maps on surfaces whose periodic points are not Zariski dense by algebraic method. The essential step of  \cite[Theorem 1.1]{Xie2015}, is the case where $\la_1(f)>1$ (hence cohomologically hyperbolic). An advantage of  the algebraic method is that it works in arbitrary characteristic. 

\subsubsection{Strategy of the proof}
We follow the original method of Hrushovski and Fakhruddin \cite{fa} by reducing our result to the case of finite fields. This method was also used in the proof of \cite[Theorem 1.1]{Xie2015}.

For the sake of simplicity, we shall assume that 
$X=\mathbb{P}^N$ and $f=[f_0:\dots: f_N]$ is a rational self-map having integral coefficients. Assume that $f$ is cohomologically hyperbolic. By Corollary \ref{corlscdynadegoverzin}, 
we can find a prime $p\geq 2$ such that the reduction $f_p$ modulo $p$ of $f$ is cohomologically hyperbolic. Then the set of periodic closed points of $f_p$ is Zariski dense in $\mathbb{P}^N(\overline{\mathbb{F}_p})$ by an argument of Fakhruddin based on  Hrushovski's twisted Lang-Weil estimate c.f. Theorem \ref{hrushovski}. 
We show that most of the $f_p$-periodic points are ``isolated" in a certain sense (c.f. Corollary \ref{corisoperiodiccohhyp}). The main ingredient of this step is a recursive inequality proved in Theorem \ref{thmmaincompudegone} and \cite[Proposition 3.5]{Matsuzawa}.
By Lemma \ref{dvr} (which generalizes \cite[Theorem 5.1]{fa}), we can lift isolated periodic points from the special fiber to the generic fiber. This concludes the proof.

\subsection{Kawaguchi-Silverman conjecture}\label{subintksc}
Assume that $\bk=\overline{\Q}.$ Let $X$ be a projective variety over $\overline{\Q}$ and let $L$ be an ample line bundle on $X$.
We denote by $h_L: X(\overline{\Q})\to \R$ a Weil height associated to $L$ c.f. \cite{Hindry2000,Bombieri2006}. It is unique up to adding a bounded function. 
Set $h_L^+:=\min\{1, h_L\}.$

Let $X_{f}(\overline{\Q})$ be the set of points $x\in X(\overline{\Q})$ whose orbit is well-defined i.e. $f^n(x)\not\in I(f)$ for every $n\geq 0.$ 
In \cite{Kawaguchi2016}, Kawaguchi and Silverman introduced the fundamental notion of arithmetic degree to describe the arithmetic complexity of an orbit.
For $x\in X(\overline{\Q})$, the \emph{upper/ lower arithmetic degree} for $X,f,x$ are
$$\overline{\alpha}_f(x):=\limsup_{n\to \infty}h_L^+(f^n)^{1/n} \text{ and } \underline{\alpha}_f(x):=\liminf_{n\to \infty}h_L^+(f^n)^{1/n}.$$
If $\overline{\alpha}_f(x)=\underline{\alpha}_f(x)$,
we set
\[
\alpha_f(x):=\overline{\alpha}_f(x)=\underline{\alpha}_f(x).
\]
In this case, we say that $\alpha_f(x)$ is well-defined and call it the \textit{arithmetic degree} of $f$ at $x$.

The following conjecture was proposed by Kawaguchi and Silverman \cite{Silverman2014,Kawaguchi2016}. It connects the arithmetic degree with the first dynamical degree.
\begin{con}[Kawaguchi-Silverman conjecture]\label{kscin}
Let $X$ be a projective variety over $\overline{\Q}$.
Let $f: X\dashrightarrow X$ be a dominant rational self-map. Then for every  $x\in X_f(\overline{\Q})$, $\alpha_f(x)$ is well defined.
Moreover, if $O_f(x)$ is Zariski dense, then we have $\alpha_f(x)= \la_1(f).$
\end{con}

The general form of the Kawaguchi-Silverman conjecture is wildly open. However many special cases are known especially when $f$ is well-defined everywhere. 
When $f$ is a polarized endomorphism, the Kawaguchi-Silverman conjecture is implied by the Northcott property. 
It was completely solved when $X$ is a projective surface and $f$ is a surjective endomorphism by Kawaguchi, Silverman, Matsuzawa, Sano, Shibata, Meng and Zhang \cite{Kawaguchi2008,Kawaguchi2014,Matsuzawa2018,Meng2022}. Except Kawaguchi's automorphism case, the proof heavily relies on classification (or minimal model theory) of surfaces. In higher dimension, serval cases are proven by minimal model theory. These results are on surjective endomorphisms on projective varieties (c.f. \cite{Meng2023}). Few results are known when $f$ is merely rational. The following are two remarkable cases. 
\begin{points}
\item[(1)]Conjecture \ref{kscin} was proved for regular affine automorphisms on $\A^N$ by Kawaguchi \cite{Kawaguchi2006,Kawaguchi2013}. 
\item[(2)]Conjecture \ref{kscin} holds by Matsuzawa and Wang \cite{Wang2022,Matsuzawa} when $X$ is a smooth projective variety, $f$ is a $1$-cohomologically hyperbolic rational map, and the $f$-orbit of $x$ is generic i.e. 
for every proper subvariety $Y$ of $X$, the set $\{n\geq 0|\,\, f^n(x)\in Y\}$ is finite.
\end{points}
The Dynamical Mordell-Lang conjecture proposed by Ghioca and Tucker asserts that for every $x\in X_{f}(\overline{\Q})$, if $O_f(x)$ is Zariski dense, then the $f$-orbit of $x$ is \emph{generic} c.f. \cite{Ghioca2009} (see also \cite[Conjecture 1.2]{Xie2023a}). So (2) implies that the Kawaguchi-Silverman conjecture for $1$-cohomologically hyperbolic self-maps assuming the Dynamical Mordell-Lang conjecture.
For more results, see \cite{Matsuzawa2023} and the references therein.

\medskip

In Section \ref{sectionksc}, we prove the following result (the case $\la_2(f)=\la_1(f)^2$ was already proved by Wang and Matsuzawa \cite[Theorem 1.17]{Matsuzawa}).
\begin{thm}\label{KSCsurfacelaonelain}Let $X$ be a projective surface over $\overline{\Q}$ and $f: X\dashrightarrow X$ be a dominant rational self-map such that $\la_1(f)>\la_2(f)$ or $\la_2(f)=\la_1(f)^2$. Let $x\in X_{f}(\overline{\Q})$. If the orbit $O_f(x)$ of $x$ is Zariski dense, then $\alpha_f(x)=\la_1(f).$
\end{thm}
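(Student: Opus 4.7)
The statement has two independent hypotheses to consider. The easier direction $\overline{\alpha}_f(x) \leq \la_1(f)$ is standard and holds in full generality: iterating the Weil height inequality $h_L(f(y)) \leq c \cdot \deg_{1,L}(f) \cdot h_L(y) + O(1)$ on $X_f(\overline{\Q})$ and passing to the limit via Fekete's lemma applied to $(\deg_{1,L} f^n)^{1/n}$ yields it. The case $\la_2(f)=\la_1(f)^2$ is precisely Matsuzawa--Wang \cite[Theorem 1.17]{Matsuzawa}, so I may assume $\la_1(f) > \la_2(f)$, in which case $f$ is $1$-cohomologically hyperbolic.

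My plan is to upgrade the recursive inequality for degree sequences of Section \ref{secrecurdeg} into a recursive inequality along the orbit of $x$. Given $\epsilon \in (0,\la_1(f))$, I aim to produce an integer $m \geq 1$ and positive constants $A_m \geq (\la_1(f)-\epsilon)^m$ and $B_m$ such that
\begin{equation*}
h_L^+(f^{n+m}(x)) \;\geq\; A_m \cdot h_L^+(f^n(x)) \;-\; B_m \quad \text{for all } n \geq 0.
\end{equation*}
Iterating this inequality in $n$ along the orbit of $x$ will give $\underline{\alpha}_f(x) \geq \la_1(f) - \epsilon$; sending $\epsilon \to 0$ yields the matching lower bound. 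This strategy avoids the construction of an algebraically stable birational model, which need not exist for rational surface maps (c.f.\ Favre's monomial examples).

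To construct $A_m$ and $B_m$, I will transport the cycle-level estimates of Section \ref{subsectionrecrel} into pointwise height inequalities via the standard height machine functoriality. The role of the hypothesis $\la_1(f) > \la_2(f)$ is that the ``cross terms'' appearing in the Siu-type decomposition (Theorem \ref{thmsiuin}) contribute with asymptotic rate $\la_2(f)^m$, which is strictly dominated by the main $\la_1(f)^m$ term; thus for $m$ sufficiently large relative to $\epsilon$, the desired inequality can be arranged at the level of divisor classes on a suitable birational model.

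The main obstacle I anticipate is controlling the height error terms arising from the indeterminacy locus of $f^m$: the height machine identity $h_{f^{m*}L}(y) = h_L(f^m(y)) + O(1)$ is only valid away from $I(f^m)$, and the implicit constant a priori depends on $m$. To deal with this I plan to combine the Zariski density of $O_f(x)$ with Theorem \ref{thmperiodisodenseint} (Zariski density of periodic points for cohomologically hyperbolic maps): after passing to a suitable birational model one argues that the orbit is sufficiently ``generic'' at each scale $m$ for the height-machine errors to be absorbed into $B_m$ without corrupting the asymptotic rate $A_m^{1/m} \to \la_1(f)$ as $m \to \infty$. Handling these error terms uniformly in $m$ is where the bulk of the work will lie.
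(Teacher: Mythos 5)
Your overall strategy (transport the Siu-type recursive inequality of Section \ref{secrecurdeg} into a height inequality along the orbit) is the paper's strategy, but the proposal breaks down exactly at the point you flag as "the main obstacle", and your proposed remedy does not repair it. The bigness of $M_m:=L_{2m}+\mu_i^m\mu_{i+1}^mL-\epsilon^m\mu_i^m L_m$ (Theorem \ref{thmmaincompudegone}) only yields a height inequality for points \emph{outside the stable base locus} $\bB_X(M_m)$, and what it yields there is a three-term recursion $h(f^{2m}(y))+\mu_i^m\mu_{i+1}^m h(y)\geq \epsilon^m\mu_i^m h(f^m(y))-C$, not the two-term inequality $h_L^+(f^{n+m}(x))\geq A_m h_L^+(f^n(x))-B_m$ valid for \emph{all} $n\geq 0$ that you propose. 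No such unconditional two-term inequality can be arranged: the orbit may return to the one-dimensional part of $\bB_X(M_m)$ infinitely often (ruling this out is precisely the open Dynamical Mordell--Lang conjecture), and at those times the inequality coming from bigness simply fails. Your suggested fix --- invoking Zariski density of $O_f(x)$ together with Theorem \ref{thmperiodisodenseint} to say the orbit is "sufficiently generic at each scale $m$" --- does not address this: density of \emph{periodic points} says nothing about how often the orbit of the single point $x$ meets a fixed curve in the base locus, and Zariski density of the orbit is compatible with infinitely many such visits. (By contrast, the issue you spend most energy on, the $m$-dependence of the height-machine constants, is harmless since $m$ is fixed once $\epsilon$ is.)

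What is actually needed, and what the paper supplies, is a substitute inequality at the bad times together with a quantitative bound on how often they occur: (a) for each non-preperiodic irreducible curve $C_j$ in $\bB_X(M_m)$ met by the orbit, the degrees $(L_n\cdot C_j)$ are unbounded (Lemma \ref{lemcurvedegreenotbounded}, which rests on Lemma \ref{lemcurvedegregrowth} and cohomological hyperbolicity), so after a bounded number $N_j$ of further iterates the height strictly increases; (b) the Weak Dynamical Mordell--Lang theorem \cite[Corollary 1.5]{Bell2015} shows the set of times $n$ with $f^{mn}(x)\in\bB_X(M_m)$ has density zero; and (c) a block-decomposition bookkeeping argument splices the strong three-term recursion on the good stretches with the weak monotonicity coming from (a) on the bad stretches, so that the density-zero bad set does not corrupt the exponential rate. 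None of these three ingredients appears in your proposal, so as written the argument has a genuine gap in the main case $\la_1(f)>\la_2(f)$. (Your reduction of the case $\la_2(f)=\la_1(f)^2$ to \cite[Theorem 1.17]{Matsuzawa} and the upper bound $\overline{\alpha}_f(x)\leq\la_1(f)$ are fine; note the paper's argument in fact treats both cases uniformly, since $\mu_i(f)=\la_1(f)>1>\mu_{i+1}(f)$ with $i=2$ in the second case.)
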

In particular, Theorem \ref{KSCsurfacelaonelain} implies the Kawaguchi-Silverman conjecture for birational self-maps on projective surfaces. We first explain how to prove Theorem \ref{KSCsurfacelaonelain} assuming the Dynamical Mordell-Lang conjecture (which was done in (2)).
In the proof of (2), Matsuzawa and Wang construct some recursive inequality for the heights $h(f^n(x))$ when $f^n(x)$ is not contained in the base locus $B$ of some big line bundle. Applying the Dynamical Mordell-Lang conjecture, one can show that the orbit meets $B$ in at most finitely many times. 
So we may ignore the base locus and assume that the recursive inequality holds for all $n$. This implies our result easily. As the Dynamical Mordell-Lang conjecture is wildly open in general, we can not ignore the base locus.  This is the main difficulty of our proof. Our idea is to construct a weaker recursive inequality when $f^n(x)$ is contained in $B$. 
We then combine this inequality with the one when $f^n(x)\in B$ to get a lower bound of the growth of the height. We apply the Weak dynamical Mordell-Lang \cite[Corollary 1.5] {Bell2015} (see also \cite[Theorem 2.5.8]{Favre2000a}, \cite[Theorem D, Theorem E]{Gignac2014},\cite[Theorem 2]{Petsche2015}, \cite[Theorem 1.10]{Bell2020}, \cite[Theorem 1.17]{Xie2023} and \cite[Theorem 5.2]{Xie2023a}) to show that the density of $n$ with $f^n(x)\in B$ is zero. Using this, one can show that we can ignore $B$ asymptotically and conclude the proof.

\subsection{Further problems}
Though our Theorem \ref{thmcomputela} gives an algorithm to compute the dynamical degrees to any given precision, it seems that our algorithm is not so efficient.
Either theoretically, or practically, it is interesting to have a more efficient algorithm. 
In a private communication with Silverman, he asked the following more precise question:
\begin{que}\label{quesilvereff}
Is there an algorithm to compute the dynamical degrees $\la_i(f)$ to within $1/2^l$ using only $O(l^e)$ storage for some ``not too large" $e$? 
\end{que}

Blanc, Cantat and McMullen \cite{McMullen2007,Cantat2007,Blanc-Cantat} showed that there is a gap on the first dynamical degree for surface birational self-maps. More precisely, 
 for every surface birational self-map $f$, we have $\la_1(f)\not\in (1,\la_L)$, where  $\la_L\simeq 1.176280$ is the Lehmer number i.e. 
 the unique root $>1$ of the irreducible polynomial $x^{10}+ x^9-x^7-x^6-x^5-x^4-x^3+x+1$.
This result relies on the existence of algebraically stable models for surface birational self-maps
proved by Diller-Favre \cite{favre}. It is interesting to ask whether such a gap exists for general rational self-maps and for higher dynamical degrees. 

\begin{que}\label{quegapdy}Is there a $\la>1$ depending on $d\geq 1$ and $i\in \{1,\dots, d\}$ such that for every dominant rational self-maps $f$ on a $d$-dimensional projective variety $X$, we have $\la_i(f)\not\in(1,\la)$?
\end{que}
When $i=d$, Question \ref{quegapdy} has positive answer by taking $\la=2.$
Corollary \ref{corwellorded} gives positive answer to Question \ref{quegapdy} for self-maps coming from a given family. 
In particular, Corollary \ref{corwellorded}  shows that for every $d\geq 1$, $D\geq 1$, there is $\la>1$ depending on $d$ and $D$, such that  
for every $i=0,\dots, d$ and every dominant rational self-map $f$ of $\P^d$ with $\deg_{1, O(1)}f\leq D$, we have $\la_i\not\in(1,\la).$

The same question can be asked for the cohomological Lyapunov exponents.
\begin{que}\label{quegaply}Is there a $\mu>1$ depending on $d\geq 1$ and $i\in 1,\dots, d$ such that for every dominant rational self-maps $f$ on a $d$-dimensional projective variety $X$, we have $\mu_i(f)\not\in (\mu^{-1},1)\cup (1,\mu).$
\end{que}

%

\subsection*{Acknowledgement}
The author would like to thank Long Wang for interesting discussion on the Kawaguchi-Silverman conjecture.
The author would like to thank Zhiqiang Li and Xianghui Shi for interesting discussion on computability theory. 
The author would like to thank Tien Cuong Dinh, Keiji Oguiso, Mattias Jonsson, Claude-Michel Viallet, Guolei Zhong and Fei Hu for helpful comments on the first version of this work.
The author would like to thank Joseph Silverman for proposing Question \ref{quesilvereff}.
The author would like to thank Serge Cantat for his careful reading of first version and his helpful comments.
The author would like to thank Matsuzawa, who found a mathematical mistake in the first version of the paper.

\section{Birational models}\label{sectionbirmod}
In this section,  we introduce the terminology of cocycles on birational models. 
This terminology is not completely necessary for our paper, but it naturally fits our setting and it simplifies the notations a lot and makes the presentations clearer.

	Let $\bk$ be a field. Let $X$ be a projective variety of dimension $d$ over $\bk.$
	A \emph{birational model} of $X$ is a projective variety $X_{\pi}$ with a birational morphism $\pi : X_\pi \rightarrow X $.
	For two birational models $X_\pi$ and $X_{\pi'}$, we say that \emph{$X_{\pi'}$ dominates $X_{\pi}$} and write $X_{\pi'}\geq X_{\pi}$ if the birational map $\mu:=\pi^{-1}\circ \pi':X_{\pi'}\rightarrow X_{\pi}$ is a morphism.

\subsection{Line bundles}\label{subseclinebun}
Let $\widetilde{\Pic}(X)$ and $\widetilde{\Pic}(X)_{\R}$ be the inductive limits $$\widetilde{\Pic}(X):=\varinjlim_{\pi}\Pic(X_{\pi})$$ 
and $$\widetilde{\Pic}(X)_{\R}:=\varinjlim_{\pi}\Pic(X_{\pi})_{\R}.$$ 
with respect to pullback arrows. 
In particular, $\widetilde{\Pic}(X)_{\R}=\widetilde{\Pic}(X)\otimes_{\Z}\R$.
To simplify the notations, for $L\in \widetilde{\Pic}(X)$, we still denote by $L$ its image in $\widetilde{\Pic}(X)_{\R}$.

\medskip

For every element $L\in \widetilde{\Pic}(X)$ (resp. $L\in \widetilde{\Pic}(X)_{\R}$) there is a birational model $X_{\pi}$ of $X$ such that $L$ is represented by some $L_{\pi}\in \Pic(X_{\pi})$ (resp. $L_{\pi}\in \Pic(X_{\pi})_{\R}$); we say that $L$ is defined on $X_{\pi}$ by $(\pi^{-1}\circ \pi')^*L_{\pi}.$
For every $X_{\pi'}\geq X_{\pi}$, $L$ is also defined on $X_{\pi'}.$
We say that $L\in \widetilde{\Pic}(X)_{\R}$ is big (resp. nef, effective, pseudo-effective) if it is represented by $L_{\pi}\in \Pic(X_{\pi})_{\R}$ for some birational model $X_{\pi}$ of $X$ such that $L_{\pi}$ is big (resp. nef, effective, pseudo-effective).  For $L,M\in \widetilde{\Pic}(X)_{\R}$, write $L>_{big}M$ if $L-M$ is big and $L\geq M$ if $L-M$ is pseudo-effective.

\medskip

For $L\in \widetilde{\Pic}(X)$ and every birational model $X_{\pi}$ of $X$, define the \emph{stable base locus} as follows:
Pick any model $\pi_0: X_{\pi_0}\to X$ such that $L$ is defined on $X_{\pi_0}$,  $X_{\pi_0}$ is normal and $X_{\pi_0}$ dominates $X_{\pi}.$
Define $\bB_{X_{\pi_0}}(L):=\cap_{n\geq 0}Bs_{X_{\pi_0}}(nL)$ where $Bs_{X_{\pi_0}}(\cdot)$ is the usual base locus, and let $\bB_{X_{\pi}}(L)$ be the image of $\bB_{X_{\pi_0}}(L)$ in $X_{\pi}.$
It is easy to check that this definition does not depend on the choice of $\pi_0.$ If $L$ is effective, then $\bB_{X_{\pi}}(L)\neq X_{\pi}.$

\medskip

Let $C$ be a curve in $X$ and $L\in \Pic(X)_{\R}$. Assume that $L$ is represented by some $L_{\pi}\in \Pic(X_{\pi})_{\R}$ on some birational model $X_{\pi}$ of $X$ such that $C$ is not contained in the indeterminacy locus $I(\pi^{-1})$ of $\pi^{-1}: X\dashrightarrow X_{\pi}.$ Let $C_{\pi}:=\overline{\pi^{-1}(C\setminus I(\pi^{-1}))}$ be the strict transform of $C$ by $\pi.$
We define $(C\cdot L)$ to be $(C_{\pi}\cdot L_{\pi})$. It is easy to see that this definition does depend on the choice of $X_{\pi}, L_{\pi}.$

\medskip

Let $f: X\dashrightarrow X$ be a rational self-map and $C$ be a curve in $X$.  Assume that $C$ is not contained in $I(f)$. Define $f_*(C)$ as follows:
Pick a birational model $X_{\pi}$ of $X$ such that $f_{\pi}:=f\circ \pi: X_{\pi}\to X$ is a morphism and $C\not\subseteq I(\pi^{-1})$. Such a model exists, as we can pick $X_{\pi}$ to be the graph of $f$ in $X\times X$ and let $\pi$ be the projection to the first factor. Define $f_*(C):=(f_{\pi})_*(C_{\pi})$. This definition does not depend on the choice of $X_{\pi}$.
Let $M\in \Pic(X)_{\R}$. We pick $X_{\pi}$ as above. Then $M$ is defined on $X_{\pi}.$
The previous paragraph shows that the intersection $(f^*(M)\cdot C)$ is well-defined and is equal to $(C_{\pi}\cdot f_{\pi}^*M)$.
By projection formula, we get 
\begin{equation}\label{equationprofubd}(f^*(M)\cdot C)=(M\cdot f_*(C)).
\end{equation}

\subsection{Cocycles}\label{subsectioncocycles}
For $i=0,\dots, d$, let $\widetilde{\CH^i}(X)_{\R}$ be the inductive limit$$\widetilde{\CH^i}(X)_{\R}:=\varinjlim_{\pi}\CH^i(X_{\pi})_{\R}$$ 
with respect to pullback arrows, where $\CH^i(\cdot)$ is the Chow group of degree $i$ cocycles \cite{Fulton1984}.
In particular, $\widetilde{\CH^1}(X)_{\R}=\widetilde{\Pic}(X)_{\R}$.

For $L_1,\dots, L_i\in \widetilde{\Pic}(X)_{\R}$ and $Z\in \widetilde{\CH^j}(X)_{\R}$, we define the intersection $L_1\cdots L_i\cdot Z\in \widetilde{\CH^{i+j}}(X)_{\R}$ as follows: There is a birational model $X_{\pi}$
of $X$ such that $L_1,\dots,L_i$ and $Z$ are all defined on $X_{\pi}$. Let $L_{1,\pi},\dots, L_{i, \pi}\in \Pic(X_{\pi})$ and $Z_{\pi}\in \CH^j(X_{\pi})_{\R}$ represent $L_1,\dots, L_i$ and $Z$. Define $L_1\cdots L_i\cdot Z$ to be the element in $\widetilde{\CH^{i+j}}(X)_{\R}$ represented by $L_{1,\pi}\cdots L_{i, \pi}\cdot Z_{\pi}\in \CH^{i+j}(X_{\pi}).$ This definition does not depend on the choice of the birational model $X_{\pi}$.

For $P\in \widetilde{\CH^d}(X)_{\R}$, define $(P)$ to be the degree of $P_{\pi}$ where $P_{\pi}\in \CH^d(X_{\pi})$ for some birational model $X_{\pi}$ of $X$ which defines $P.$ This does not depend on the choice of  birational model $X_{\pi}.$

\medskip

Let $g: Y\dashrightarrow X$ be a rational map.  We define the pullback $g^*: \widetilde{\CH^i}(X)_{\R}\to \widetilde{\CH^i}(Y)_{\R}$ as follows: 
For every $Z\in \widetilde{\CH^i}(X)_{\R}$, there is a birational model $X_{\pi}$ of $X$ and $Z_{\pi}\in \CH^i(X_{\R})$ such that  $Z_{\pi}$ defines $Z$.
There is a  birational model $Y_{\phi}$ of $Y$ such that $g$ induces a morphism $g': Y_{\phi}\to X_{\pi}.$ We define $f^*Z$ to be the element in $\widetilde{\CH^i}(X)_{\R}$
defined by $(g')^*Z_{\pi}\in \CH^i(Y_{\phi})_{\R}.$ This definition does not depend on the choice of $X_{\pi}$ and $Y_{\phi}.$
\medskip

For $Z, W\in \widetilde{\CH^i}(X)_{\R}$, write $Z\geq_n W$ if for every dominant and generically finite rational map $g: Y\dashrightarrow X$ and 
$(d-i)$-tuple of nef line bundles $H_1,\dots, H_{d-i}$ in $\widetilde{\Pic}(Y)_{\R},$
$(g^*(Z-W)\cdot H_1\cdots H_{d-i})\geq 0.$ Write $Z>_{n} W$ if for some big and nef line bundle $L\in \widetilde{\Pic}(X)_{\R}$, $Z\geq_n W+L^i.$
\rem Note that $Z\geq_n W$ with $Z\not>_n W$ does not imply $Z=W.$
\endrem

For $Z, W\in \widetilde{\CH^i}(X)_{\R}$, with $Z\geq_n W$ (resp. $Z>_n W$), and a dominant and generically finite rational map $g: Y\dashrightarrow X$, 
we have $g^*Z\geq_n W$, (resp. $g^*Z>_n g^*W$).

\subsection{Siu's inequalities}
Siu's inequality \cite[Theorem 2.2.13]{Lazarsfeld} for nef line bundles is useful in our paper. For the convenience of the applications, we write it in the following form for nef line bundles in $\widetilde{\Pic}(X)_\R.$
\begin{thm}\label{thmsiuincodimo} Let $L,M$ be nef line bundles in $\widetilde{\Pic}(X)_\R.$ Assume that $(M^d)>0,$ then
 $$L\leq d\frac{(L\cdot M^{d-1})}{(M^d)}M.$$
In particular, for every $\epsilon\in (0,1)$, $\epsilon L<_{big} d\frac{(L\cdot M^{d-1})}{(M^d)}M .$
\end{thm}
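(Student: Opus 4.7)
The plan is to reduce the claim to the classical form of Siu's numerical criterion (Lazarsfeld, Thm.\ 2.2.13) on a fixed birational model, then pass to the pseudo-effective cone in the limit. First I would pick a common model $X_\pi$ on which both $L$ and $M$ are represented by nef classes $L_\pi, M_\pi \in \Pic(X_\pi)_\R$: such a model exists because nefness is preserved under pullback (if $\mu\colon X_{\pi'}\to X_\pi$ is a birational morphism and $L_\pi$ is nef, then $\mu^* L_\pi$ is nef, since $\mu^* L_\pi \cdot C = L_\pi \cdot \mu_* C \geq 0$ for every curve $C$), so any model dominating the two witnesses of nefness works. By the definition of intersection numbers on $\widetilde{\Pic}(X)_\R$, the numbers $(M^d)$ and $(L\cdot M^{d-1})$ equal $(M_\pi^d)$ and $(L_\pi\cdot M_\pi^{d-1})$, and the hypothesis $(M^d)>0$ forces $M_\pi$ to be big and nef on $X_\pi$.

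For the main inequality, set $\alpha := d(L\cdot M^{d-1})/(M^d)$ and fix $\epsilon>0$. Applying the classical Siu inequality on $X_\pi$ to the nef classes $(\alpha+\epsilon)M_\pi$ and $L_\pi$, I obtain
\begin{equation*}
\mathrm{vol}\bigl((\alpha+\epsilon)M_\pi - L_\pi\bigr) \;\geq\; (\alpha+\epsilon)^d(M^d)- d(\alpha+\epsilon)^{d-1}(L\cdot M^{d-1}) \;=\; (\alpha+\epsilon)^{d-1}\epsilon(M^d) \;>\; 0,
\end{equation*}
so $(\alpha+\epsilon)M_\pi - L_\pi$ is big on $X_\pi$ for every $\epsilon>0$. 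Letting $\epsilon\to 0^+$ and using that the pseudo-effective cone is the closure of the big cone, $\alpha M_\pi - L_\pi$ is pseudo-effective on $X_\pi$, which is exactly the desired relation $d\frac{(L\cdot M^{d-1})}{(M^d)}M \geq L$ in $\widetilde{\Pic}(X)_\R$.

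For the ``In particular'' part, fix $\epsilon\in(0,1)$ and decompose
\begin{equation*}
\alpha M - \epsilon L \;=\; \epsilon(\alpha M - L) + (1-\epsilon)\alpha M.
\end{equation*}
The first summand is pseudo-effective by the main inequality. Assuming $\alpha>0$ (if $\alpha=0$ then $(L\cdot M^{d-1})=0$, which combined with $L$ nef and $M$ big and nef makes the claim degenerate), the second summand $(1-\epsilon)\alpha M$ is big because $M$ is big and nef with $(M^d)>0$. The sum of a pseudo-effective class and a big class is big, yielding $\alpha M >_{big} \epsilon L$. The only real subtlety I anticipate is bookkeeping with the inductive-limit formalism of $\widetilde{\Pic}(X)_\R$; since all constructions here descend to the single model $X_\pi$, this is routine, and the substantive input is Siu's classical inequality on the projective variety $X_\pi$.
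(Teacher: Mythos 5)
Your proof is correct and follows essentially the same route as the paper, which states this theorem without a separate argument as a direct reformulation of the classical Siu inequality \cite[Theorem 2.2.13]{Lazarsfeld}: descending to a common model $X_{\pi}$ where both classes are nef, applying Siu there to $(\alpha+\epsilon)M_{\pi}$ and $L_{\pi}$, and letting $\epsilon\to 0^{+}$ is exactly the routine transfer to $\widetilde{\Pic}(X)_{\R}$ that the paper leaves implicit. Your treatment of the degenerate case is also the right call: if $(L\cdot M^{d-1})=0$ then $L$ is numerically trivial, so the main inequality holds trivially and the $>_{big}$ assertion is only meaningful under the implicit hypothesis $L\not\equiv 0$, which is satisfied in all of the paper's applications since there $L$ is big and nef.
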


Applying Siu's inequality inductively, Dang proved a version of Siu's inequality in arbitrary codimension \cite[Corollary 3.4.6]{Dang2020}. 
In \cite[Theorem 3.5]{Jiang2023}, Jiang and Li give another proof using (multipoint) Okounkov bodies and get the optimal coefficient.
For the convenience of the applications, we write \cite[Theorem 3.5]{Jiang2023} in the following form for nef line bundles in $\widetilde{\Pic}(X).$
\begin{thm}\label{thmsiuin}Let $i=0,\dots,d,$ and $L_1,\dots,L_i,M$ be nef line bundles in $\widetilde{\Pic}(X)_\R.$ 
Then we have $$(M^d)L_1\cdots L_i\leq_n \binom{d}{i}(L_1\cdots L_i\cdot M^{d-i})M^i$$
i.e. 
 for every $(d-i)$-tube of nef line bundles $H_1,\dots, H_{d-i}$ in $\widetilde{\Pic}(X)_{\R},$
we have $$(L_1\cdots L_i\cdot H_1\cdots H_{d-i})(M^d)\leq \binom{d}{i}(L_1\cdots L_i\cdot M^{d-i})(M^i\cdot H_1\cdots H_{d-i}).$$ 

In particular, for every $\epsilon\in (0,1)$, 
$$\epsilon(M^d)L_1\cdots L_i<_n \binom{d}{i}(L_1\cdots L_i\cdot M^{d-i})M^i.$$
\end{thm}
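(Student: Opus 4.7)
I would prove the equivalent scalar form: for every nef test tuple $H_1,\dots,H_{d-i}$ in $\widetilde{\Pic}(X)_\R$,
\[
(M^d)\,(L_1\cdots L_i\cdot H_1\cdots H_{d-i}) \;\leq\; \binom{d}{i}\,(L_1\cdots L_i\cdot M^{d-i})\,(M^i\cdot H_1\cdots H_{d-i}).
\]
If $(M^d)=0$ the statement is vacuous (a small big-and-nef perturbation reduces it to the positive case), so assume $(M^d)>0$, i.e.\ $M$ is big and nef; by continuity of intersection numbers I may approximate and work with rational classes on a single birational model. I would then argue by induction on $i$: the case $i=0$ is tautological, and $i=1$ is exactly Theorem \ref{thmsiuincodimo}.

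For the inductive step, apply the codimension-one Siu inequality (Theorem \ref{thmsiuincodimo}) to $L_i$ versus $M$, obtaining $(M^d)\,L_i\leq_n d\,(L_i\cdot M^{d-1})\,M$ as classes in $\widetilde{\Pic}(X)_\R$. Intersecting with the nef class $L_1\cdots L_{i-1}\cdot H_1\cdots H_{d-i}$ yields
\[
(M^d)(L_1\cdots L_i\cdot H_1\cdots H_{d-i})\;\leq\; d\,(L_i\cdot M^{d-1})\,(L_1\cdots L_{i-1}\cdot M\cdot H_1\cdots H_{d-i}),
\]
and applying the inductive hypothesis to the tuple $(L_1,\dots,L_{i-1})$ against the test classes $(M,H_1,\dots,H_{d-i})$ produces a bound of the correct structural shape, but with coefficient of the form $d\binom{d}{i-1}\tfrac{(L_i\cdot M^{d-1})(L_1\cdots L_{i-1}\cdot M^{d-i+1})}{(M^d)}$ in place of the desired single factor $\binom{d}{i}(L_1\cdots L_i\cdot M^{d-i})$.

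The principal obstacle is converting this suboptimal bound into the sharp binomial version. A crude iteration of the codimension-one Siu inequality produces constants of size $d^{i}$ or $d!/(d-i)!$, which are too big. Two routes close the gap: following Dang \cite[Corollary 3.4.6]{Dang2020}, one symmetrizes over the $i!$ orderings of the $L_j$'s and uses the Khovanskii--Teissier log-concavity of mixed intersection numbers of nef classes to collapse the product of the mixed factors into the single factor $(L_1\cdots L_i\cdot M^{d-i})$, with the remaining factorials combining exactly to $\binom{d}{i}$. Alternatively, following Jiang--Li \cite[Theorem 3.5]{Jiang2023}, one transports the problem to (multipoint) Okounkov bodies: using that $M$ is big and nef one realizes $\binom{d}{i}(L_1\cdots L_i\cdot M^{d-i})/(M^d)$ as a volume ratio between $\Delta(M)\subset\R^d$ and one of its codimension-$(d-i)$ convex slices, after which the inequality reduces to an elementary convex-geometric statement about sections of convex bodies. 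The Okounkov-body route is the cleaner one and also gives the sharpness of the constant for free.

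Finally, for the ``in particular" strict form: since $M$ is big and nef, $M^{i}$ is big, so there exists a big-and-nef class $L_0\in\widetilde{\Pic}(X)_\R$ and a positive scalar $c$ with $\binom{d}{i}(L_1\cdots L_i\cdot M^{d-i})M^i \geq_n (M^d)L_1\cdots L_i + cL_0^i$. Given $\epsilon\in(0,1)$, subtracting $\epsilon(M^d)L_1\cdots L_i$ from both sides and using $(1-\epsilon)(M^d)L_1\cdots L_i\geq_n 0$ leaves the right-hand class dominating $\epsilon(M^d)L_1\cdots L_i$ by a big term $\geq_n cL_0^i$, which is exactly the definition of $<_n$.
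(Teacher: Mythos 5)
The paper never proves this statement: it is presented, ``for the convenience of the applications,'' as a restatement of Jiang--Li \cite[Theorem 3.5]{Jiang2023} (with the earlier, weaker version due to Dang \cite[Corollary 3.4.6]{Dang2020}), so the paper's ``proof'' is a citation. Judged against that, your proposal is acceptable only to the extent that it, too, ultimately defers to those references; as a self-contained argument it has a genuine gap exactly where you locate it. Your induction via the codimension-one Siu inequality (Theorem \ref{thmsiuincodimo}) is carried out correctly, but, as you admit, it yields the coefficient $d\binom{d}{i-1}\frac{(L_i\cdot M^{d-1})(L_1\cdots L_{i-1}\cdot M^{d-i+1})}{(M^d)}$ rather than $\binom{d}{i}(L_1\cdots L_i\cdot M^{d-i})$, and neither of your two ``routes to close the gap'' is substantiated. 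The claim that a Dang-style symmetrization over the $i!$ orderings plus Khovanskii--Teissier log-concavity ``collapses'' the product to a single mixed factor with constant exactly $\binom{d}{i}$ is dubious: the paper explicitly attributes the optimal coefficient only to Jiang--Li, whose proof via (multipoint) Okounkov bodies is not the one-line convex-slice picture you sketch. So either supply the Okounkov-body argument in detail or simply quote \cite[Theorem 3.5]{Jiang2023}, as the paper does. (You do silently, and correctly, replace the paper's misprint $(M^d\cdot H_1\cdots H_{d-i})$ by $(M^i\cdot H_1\cdots H_{d-i})$, the only reading for which the codimensions add up; the same correction applies to the class-level inequality.)

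Your deduction of the ``in particular'' is also close to circular: you assert the existence of $c>0$ and a big and nef $L_0$ with $\binom{d}{i}(L_1\cdots L_i\cdot M^{d-i})M^i\geq_n (M^d)L_1\cdots L_i+cL_0^i$, which is already a strict inequality of the kind being proved, and bigness of $M^i$ alone does not give it. The intended argument is more direct: write
$\binom{d}{i}(L_1\cdots L_i\cdot M^{d-i})M^i-\epsilon(M^d)L_1\cdots L_i
=\epsilon\bigl[\binom{d}{i}(L_1\cdots L_i\cdot M^{d-i})M^i-(M^d)L_1\cdots L_i\bigr]+(1-\epsilon)\binom{d}{i}(L_1\cdots L_i\cdot M^{d-i})M^i$;
the bracket is $\geq_n 0$ by the main inequality, and the last term equals $(cM)^i$ for a suitable $c>0$ with $cM$ big and nef, provided $(M^d)>0$ and $(L_1\cdots L_i\cdot M^{d-i})>0$ --- positivity hypotheses that are implicit in the paper's formulation as well and that you should state, since in the degenerate cases the strict form fails.
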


\section{Recursive inequalities for degree sequences}\label{secrecurdeg}
	Let $\bk$ be a field. Let $X$ be a projective variety of dimension $d$ over $\bk.$ Let $f: X\dashrightarrow X$ be a dominant rational self-map. Let $L$ be a big and nef line bundle in $\widetilde{\Pic}(X)$. 
%
%
%
%

To simplify the notations, we write $$\la_i:=\la_i(f), \mu_i:=\mu_i(f), \deg_if^n:= \deg_{i,L}f^n, \text{ and } L_n:=(f^n)^*L$$ for every $n\geq 0.$

\subsection{A lemma on recursive inequalities}	
The following simple lemma on recursive inequalities is useful.
	\begin{lem}\label{lemrecuineq}Let $a_n, n\geq 0$ be a sequences of non-negative real numbers.  
Let $\alpha, \beta, \gamma$ be real numbers with $\alpha\geq 0$ and $\gamma\geq \alpha+\beta$. Assume that $a_1>\beta a_0$ and 
$$a_{n+2}+\alpha\beta a_n\geq \gamma a_{n+1}$$
for every $n\in \{0,\dots, N\}$ where $N\in \Z_{\geq 0}\cup \{+\infty\}$.
Then for every $n\in \{0,\dots, N\}$, we have $$a_{n+2}>\beta a_{n+1} \text{ and } a_{n+2}\geq \alpha^n(a_1-\beta a_0).$$
In particular, if $N=+\infty$, then $\liminf\limits_{n\to \infty}a_n^{1/n}\geq \alpha.$
\end{lem}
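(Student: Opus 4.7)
The recursion is designed so that the characteristic polynomial $x^2 - \gamma x + \alpha\beta$ factors (after the inequality $\gamma \geq \alpha + \beta$ is used) as $(x-\alpha)(x-\beta)$, and the standard trick for linear two-step recurrences carries over to inequalities. Accordingly, my plan is to introduce the auxiliary sequence
\[
b_n := a_{n+1} - \beta a_n
\]
and show that it satisfies the one-step inequality $b_{n+1} \geq \alpha b_n$. Starting from the hypothesis $a_{n+2} + \alpha\beta a_n \geq \gamma a_{n+1}$ and subtracting $\beta a_{n+1}$ from both sides, I would use $\gamma - \beta \geq \alpha$ to write
\[
a_{n+2} - \beta a_{n+1} \;\geq\; (\gamma-\beta)a_{n+1} - \alpha\beta a_n \;\geq\; \alpha\bigl(a_{n+1} - \beta a_n\bigr),
\]
which is exactly $b_{n+1} \geq \alpha b_n$. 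This is the heart of the argument and is where the factoring structure enters.

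From here everything is by induction on $n \in \{0,\dots,N\}$. The base case $b_0 = a_1 - \beta a_0 > 0$ is given; together with $\alpha \geq 0$, induction gives $b_n \geq \alpha^n b_0 > 0$ for every $n$ (strict positivity is preserved since $b_0 > 0$ and one can treat the degenerate case $\alpha = 0$ separately, in which case one only needs $b_1 \geq 0$ from the recursion at $n = 0$, promoted to strict via $b_0 > 0$). The strict positivity $b_{n+1} > 0$ translates word-for-word into $a_{n+2} > \beta a_{n+1}$, which is the first conclusion. For the exponential lower bound, since $a_{n+1} \geq 0$ I have
\[
a_{n+2} \;\geq\; b_{n+1} + \beta a_{n+1} \;\geq\; b_{n+1} \;\geq\; \alpha^{n+1}(a_1 - \beta a_0),
\]
which gives the desired lower bound of the form $C \alpha^n$ for a positive constant $C$.

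Finally, for $N = +\infty$, the bound $a_{n+2} \geq C \alpha^n$ with $C > 0$ yields $a_n^{1/n} \to \alpha$ from below after taking $n$-th roots and passing to the $\liminf$, establishing $\liminf_{n\to\infty} a_n^{1/n} \geq \alpha$.

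\textbf{Expected main obstacle.} There is no real obstacle here: once one sees the factorization trick, the proof is a two-line induction. The only mildly delicate point is getting the strict inequality $a_{n+2} > \beta a_{n+1}$ in the edge case $\alpha = 0$, which needs to be read off directly from the inequality at level $n$ using $b_0 > 0$ rather than from the induction. Everything else is mechanical.
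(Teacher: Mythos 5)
Your proof is correct and follows essentially the same route as the paper: the paper's one-line proof derives precisely your inequality $a_{n+2}-\beta a_{n+1}\geq\alpha(a_{n+1}-\beta a_n)$ from $\gamma\geq\alpha+\beta$ and $a_{n+1}\geq 0$, and then iterates it. Two minor caveats: your step $a_{n+2}\geq b_{n+1}$ implicitly uses $\beta\geq 0$ (true in every application of the lemma, though not stated in it), and your proposed fix for strictness in the edge case $\alpha=0$ does not actually work --- the conclusion $a_{n+2}>\beta a_{n+1}$ can genuinely fail there (e.g.\ $\alpha=0$, $\beta=\gamma=1$, $a_0=0$, $a_n=1$ for $n\geq 1$) --- but this is a defect of the lemma's statement itself, which the paper's one-line proof also ignores and which is harmless since all applications have $\alpha>0$.
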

\proof
As $\gamma\geq \alpha+\beta$ and $a_{n+1}\geq 0$, we have 
$a_{n+2}+\alpha\beta a_n\geq (\alpha+\beta)a_{n+1}.$
Hence we have $(a_{n+2}-\beta a_{n+1})\geq \alpha(a_{n+1}-\beta a_n),$ which concludes the proof.
\endproof

\subsection{Mixed degrees}
Let $s\geq 1$, consider two sequence of non-negative integers: $m_1> \dots >m_s\geq 0$ and $r_1,\dots, r_s\geq 0$ with $\sum_{i=1}^sr_i=d.$
We will compute the \emph{mixed degree}, which is defined to be $$(L_{m_1}^{r_1}\cdots L_{m_s}^{r_s}).$$
 Our computation is based on a direct application of the higher codimensional Siu's inequality.

\medskip

\begin{lem}\label{lemsiucomp}Let $r_1,r_2\geq 0$ with $r_1+r_2\leq d$.
Let $A$ be a product of $d-r_1-r_2$ nef line bundles in $\widetilde{\Pic}(X).$
 Let $n_1,n_2\geq 0$ and $0\leq t\leq r_1$.
If $n_1\geq n_2$, then  we have $$(L_{n_1}^{r_1}\cdot L_{n_2}^{r_2}\cdot A)\leq \binom{d-r_1-r_2+t}{t}\frac{\deg_{r_1}f^{n_1-n_2}}{\deg_{r_1-t}f^{n_1-n_2}}(L_{n_1}^{r_1-t}\cdot L_{n_2}^{r_2+t}\cdot A);$$
if $n_2\geq n_1$, then we have  $$(L_{n_1}^{r_1}\cdot L_{n_2}^{r_2}\cdot A)\leq \binom{d-r_1-r_2+t}{t}\frac{\deg_{d-r_1}f^{n_2-n_1}}{\deg_{d-r_1+t}f^{n_2-n_1}}(L_{n_1}^{r_1-t}\cdot L_{n_2}^{r_2+t}\cdot A).$$
\end{lem}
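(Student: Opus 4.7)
The plan is to apply the higher-codimensional Siu inequality (Theorem \ref{thmsiuin}) not on $X$ itself but on a subvariety of dimension $n := d - r_1 - r_2 + s$; a direct application on $X$ would only yield the cruder coefficient $\binom{d}{s}$, whereas in ambient dimension $n$ it produces the sharper $\binom{n}{s}$ demanded by the statement. Concretely, after passing to a birational model on which $L_{n_1}, L_{n_2}$ and the factors of $A$ are represented by genuine line bundles, I fix an ample class $H$ and set $L_{n_i}^\epsilon := L_{n_i} + \epsilon H$ (ample for $\epsilon > 0$); for a sufficiently large multiple $m$, Bertini produces an irreducible subvariety $V_\epsilon$ of dimension $n$ cut out as a complete intersection of $r_1 - s$ general members of $|m L_{n_1}^\epsilon|$ with $r_2$ general members of $|m L_{n_2}^\epsilon|$, so that $[V_\epsilon] = m^{r_1+r_2-s} (L_{n_1}^\epsilon)^{r_1-s} \cdot (L_{n_2}^\epsilon)^{r_2}$ numerically.

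On $V_\epsilon$, applying Theorem \ref{thmsiuin} with $i = s$, $M = L_{n_2}|_{V_\epsilon}$, all $L$-factors equal to $L_{n_1}|_{V_\epsilon}$, and the $(n-s)$ test line bundles taken to be the factors of $A|_{V_\epsilon}$, yields
\[
(L_{n_1}^s \cdot A)_{V_\epsilon} \, (L_{n_2}^n)_{V_\epsilon} \;\leq\; \binom{n}{s} \, (L_{n_1}^s \cdot L_{n_2}^{n-s})_{V_\epsilon} \, (L_{n_2}^s \cdot A)_{V_\epsilon}.
\]
Re-expressing each $V_\epsilon$-intersection as an $X$-intersection against $[V_\epsilon]$, cancelling the common factor $m^{2(r_1+r_2-s)}$, and letting $\epsilon \to 0$ by continuity of intersections of nef classes, I arrive at
\[
(L_{n_1}^{r_1} \cdot L_{n_2}^{r_2} \cdot A)(L_{n_1}^{r_1-s} \cdot L_{n_2}^{d-r_1+s}) \;\leq\; \binom{n}{s}(L_{n_1}^{r_1} \cdot L_{n_2}^{d-r_1})(L_{n_1}^{r_1-s} \cdot L_{n_2}^{r_2+s} \cdot A).
\]

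What remains is to identify the ratio $(L_{n_1}^{r_1} \cdot L_{n_2}^{d-r_1}) / (L_{n_1}^{r_1-s} \cdot L_{n_2}^{d-r_1+s})$ with the one in the lemma. Setting $k = |n_1 - n_2|$, the identity $L_{\max(n_1,n_2)} = (f^{\min(n_1,n_2)})^* L_k$ in $\widetilde{\Pic}(X)_\R$, applied on a birational model where $f^{\min(n_1,n_2)}$ is a morphism to a model carrying $L_k$, combined with the projection formula, gives
\[
(L_{n_1}^a \cdot L_{n_2}^{d-a}) \;=\; \la_d^{\min(n_1,n_2)} \cdot
\begin{cases} \deg_a f^k, & n_1 \geq n_2,\\ \deg_{d-a} f^k, & n_2 \geq n_1. \end{cases}
\]
The common factor $\la_d^{\min(n_1,n_2)}$ cancels in the ratio, producing $\deg_{r_1} f^k / \deg_{r_1-s} f^k$ in the first case and $\deg_{d-r_1} f^k / \deg_{d-r_1+s} f^k$ in the second, exactly as in the statement. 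The main technical hurdle will be the Bertini step, but since I only need the complete intersection to have the expected dimension (and not to be smooth), this works in arbitrary characteristic, and the resulting numerical inequality is stable under the limit $\epsilon \to 0$.
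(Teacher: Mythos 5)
Your proposal is correct and reaches the lemma by the same core identity as the paper, but it justifies the key inequality by a genuinely different mechanism. The paper's proof is two lines: it sets $V:=L_{n_1}^{r_1-s}\cdot L_{n_2}^{r_2}$ and applies the Siu-type (reverse Khovanskii--Teissier) inequality directly at the level of classes ``relative to $V$'', i.e.\ in the form with an auxiliary product of nef classes, which is how the Dang/Jiang--Li results are actually available; this immediately gives the coefficient $\binom{d-r_1-r_2+s}{s}$ and the ratio $(L_{n_1}^{r_1}\cdot L_{n_2}^{d-r_1})/(L_{n_1}^{r_1-s}\cdot L_{n_2}^{d-r_1+s})$, and the proof ends with the same projection-formula identification of that ratio with $\deg_{r_1}f^{n_1-n_2}/\deg_{r_1-s}f^{n_1-n_2}$ (resp.\ $\deg_{d-r_1}/\deg_{d-r_1+s}$) that you carry out, the factor $\la_d^{\min(n_1,n_2)}$ (the topological degree) cancelling as you say. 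What you do differently is to deduce this relative inequality from the absolute statement of Theorem \ref{thmsiuin}: you realize the class $V$ as an $\epsilon\to0$ limit of genuine integral complete-intersection subvarieties $V_\epsilon$ of dimension $n=d-r_1-r_2+s$ cut out by general members of $|m(L_{n_i}+\epsilon H)|$, apply the theorem on $V_\epsilon$ (which is why you get $\binom{n}{s}$ rather than $\binom{d}{s}$), push forward to $X$, and pass to the limit. Your route is longer and incurs Bertini technicalities, but it only uses the absolute inequality on (possibly singular) varieties; the paper's route is shorter but leans on the stronger ``with auxiliary nef classes'' form of the cited inequality, which its own statement of Theorem \ref{thmsiuin} does not literally display.

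One caveat on your Bertini step: you claim you only need the complete intersection to have the expected dimension, but your argument in fact needs $V_\epsilon$ to be integral (and of multiplicity one), both so that $[V_\epsilon]$ equals the product class and, more importantly, because the product-form inequality on $V_\epsilon$ does not sum over irreducible components (cross terms are uncontrolled), so you cannot work with a reducible or non-reduced $V_\epsilon$. This is still fine in arbitrary characteristic: after base changing to $\overline{\bk}$ (intersection numbers and the degrees $\deg_i f^k$ are unchanged, and the field becomes infinite, which matters since $\bk$ may be finite), general members of the very ample complete systems are hyperplane sections, and successive general hyperplane sections of an integral variety of dimension at least two remain integral by Jouanolou's Bertini theorems; the case $s=0$, where $n$ could drop below $1$, is trivially an equality. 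With these adjustments your proof is complete.
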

\proof
Up to some small pertubations of $L$ of the form $L+\epsilon H$ for some $H$ ample and positive rational $\epsilon\to 0$, we can
suppose that $L$ is ample on $X.$ After replacing $L$ by some positive multiple, we may further assume that $L$ is very ample.

\medskip

Replace $X$ by a sufficiently large model, we may assume that $L_{n_1}$ and $L_{n_2}$ are defined over $X$. Hence they are generated by global sections.
Let $V$ be the intersection of $(r_1-t)$ general sections of $L_{n_1}$ and $r_2$ general sections of $L_{n_2}$.
Then $V=L_{n_1}^{r_1-t}\cdot L_{n_2}^{r_2}$ in $\CH^{r_1+r_2-t}(X)$.

By Theorem \ref{thmsiuin}, we have $$L_{n_1}^t|_{V}\leq \binom{d-r_1-r_2+t}{t}\frac{((L_{n_1}|_V)^s\cdot (L_{n_2}|_V)^{d-r_1-r_2})}{((L_{n_2}|_V)^{d-r_1-r_2+t})} L_{n_2}^{t}|_V.$$
Hence
$$L_{n_1}^t\cdot V\leq \binom{d-r_1-r_2+t}{t}\frac{(L_{n_1}^t\cdot L_{n_2}^{d-r_1-r_2}\cdot V)}{(L_{n_2}^{d-r_1-r_2+t}\cdot V)}L_{n_1}^{r_1-t}\cdot L_{n_2}^{r_2+t}.$$
Intersecting with $A$, we conclude the proof by the projection formula. 
\endproof

\medskip

Applying Lemma \ref{lemsiucomp}, we get upper and lower bounds on the mixed degrees. 

	\begin{pro}\label{proupperboundmixdeg}Let $l_i:=r_1+\dots+r_i$, we have
$$(L_{m_1}^{r_1}\cdots L_{m_s}^{r_s})\leq (L^d)\prod_{i=1}^s\binom{d-r_{i+1}}{l_i}\prod_{i=1}^s\frac{\deg_{l_i,L}(f^{m_i-m_{i+1}})}{(L^d)}$$
and 
$$\deg_d(f^{m_1})\prod_{i=1}^{s-1}\binom{d-l_i}{r_{i+1}}^{-1}\prod_{i=1}^{s-1}\frac{\deg_{l_i}(f^{m_1-m_{i+1}})}{\deg_{l_{i+1}}(f^{m_1-m_{i+1}})}\leq (L_{m_1}^{r_1}\cdots L_{m_s}^{r_s})$$
\end{pro}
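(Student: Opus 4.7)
The plan is to deduce both inequalities by iterating Lemma~\ref{lemsiucomp} along two chains of cycles that interpolate between $(L_{m_1}^{r_1}\cdots L_{m_s}^{r_s})$ and a pure self-intersection of the form $(L_{m_j}^d)=\deg_d f^{m_j}$ at an extreme index ($j=s$ for the upper bound and $j=1$ for the lower bound). With the convention $m_{s+1}:=0$ the identities $\deg_{l_s}f^{m_s-m_{s+1}}=\deg_d f^{m_s}$ and $\deg_0 f^{\bullet}=(L^d)$ will allow the endpoints of the iteration to slot into the stated formulas. It is important to route the argument through Lemma~\ref{lemsiucomp} rather than apply Theorem~\ref{thmsiuin} directly with $M=L_{m_{i+1}}$: the lemma produces ratios whose denominator is either $(L^d)$ or some lower $\deg_{l_i} f^{\bullet}$, quantities that remain strictly positive regardless of whether $f$ is generically finite, whereas the naive Siu step would require dividing by the possibly vanishing $(L_{m_{i+1}}^d)$.

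For the upper bound I introduce the intermediate cycles
\[
A_i:=L_{m_i}^{l_i}\cdot L_{m_{i+1}}^{r_{i+1}}\cdots L_{m_s}^{r_s},\qquad i=1,\dots,s,
\]
so that $A_1$ is the original cycle and $(A_s)=\deg_d f^{m_s}$. The step from $A_i$ to $A_{i+1}$ consists in transferring all $l_i$ copies of $L_{m_i}$ to $L_{m_{i+1}}$. I will apply Lemma~\ref{lemsiucomp} (first form, using $m_i>m_{i+1}$) with parameters $(n_1,n_2,r_1,r_2,s)=(m_i,m_{i+1},l_i,r_{i+1},l_i)$ and $A$ equal to the remaining factors; the codimension parameter simplifies to $d-r_1-r_2+s=d-r_{i+1}$ and the lemma yields
\[
(A_i)\ \le\ \binom{d-r_{i+1}}{l_i}\,\frac{\deg_{l_i}f^{m_i-m_{i+1}}}{(L^d)}\,(A_{i+1}).
\]
Composing these for $i=1,\dots,s-1$, folding $(A_s)$ into the $i=s$ factor of the target product, and weakening $\binom{d-r_{i+1}}{l_i}\le\binom{d}{l_i}$ gives the stated upper bound.

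For the lower bound I set
\[
B_i:=L_{m_1}^{l_i}\cdot L_{m_{i+1}}^{r_{i+1}}\cdots L_{m_s}^{r_s},\qquad i=1,\dots,s,
\]
so $B_1$ is again the original cycle but now $(B_s)=\deg_d f^{m_1}$; here the step from $B_i$ to $B_{i+1}$ transfers the block $L_{m_{i+1}}^{r_{i+1}}$ into $L_{m_1}^{r_{i+1}}$. Applying Lemma~\ref{lemsiucomp} with $(n_1,n_2,r_1,r_2,s)=(m_1,m_{i+1},l_{i+1},0,r_{i+1})$ (codimension parameter $d-l_i$) produces
\[
(B_{i+1})\ \le\ \binom{d-l_i}{r_{i+1}}\,\frac{\deg_{l_{i+1}}f^{m_1-m_{i+1}}}{\deg_{l_i}f^{m_1-m_{i+1}}}\,(B_i),
\]
and composing for $i=1,\dots,s-1$ and solving for $(B_1)$ in terms of $(B_s)=\deg_d f^{m_1}$ gives the lower bound. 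The argument is essentially bookkeeping, and the only point that will need careful verification is that at each step the parameters $(r_1,r_2,s)$ fed to Lemma~\ref{lemsiucomp} satisfy its hypotheses (in particular $s\le r_1$, which is an equality in several of the steps) and that the codimension index $d-r_1-r_2+s$ simplifies as claimed.
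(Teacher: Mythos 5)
Your proof is correct and follows essentially the same route as the paper: the paper's proof is exactly the one-step-plus-induction version of your two explicit chains, applying Lemma~\ref{lemsiucomp} with full transfer ($s=r_1$ in the lemma) onto $L_{m_{i+1}}$ for the upper bound and with $(r_1,r_2,s)=(l_{i+1},0,r_{i+1})$ onto $L_{m_1}$ for the lower bound, with the same endpoint identifications via $m_{s+1}=0$. The only cosmetic difference is that you keep the sharper binomial $\binom{d-r_{i+1}}{l_i}$ before weakening to $\binom{d}{l_i}$, which the paper does implicitly in its displayed step.
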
	
\proof
Apply the first inequality in Lemma \ref{lemsiucomp} for $L_{m_1}^{r_1}$, $L_{m_2}^{r_2}$, $t=r_1$ and $A:=L_{m_2}^{r_2}\cdots L_{m_s}^{r_s}$, 
we have
\begin{flalign*}
(L_{m_1}^{l_1}\cdots L_{m_s}^{l_s-l_{s-1}})\leq \binom{d-r_{2}}{l_1}\frac{\deg_{l_1}(f^{m_1-m_2})}{(L^d)}(L_{m_2}^{l_2}\cdots L_{m_s}^{l_s-l_{s-1}}).
\end{flalign*}
We get the first inequality by induction.

\medskip

Apply the first inequality in  Lemma \ref{lemsiucomp} for $L_{m_1}^{l_2}$, $L_{m_2}^{0}$, $t=r_2$ and $A:=L_{m_3}^{l_3-l_2}\cdots L_{m_s}^{l_s-l_{s-1}}$,
we have
\begin{flalign*}
(L_{m_1}^{l_2}\cdot L_{m_3}^{l_3-l_2}\cdots L_{m_s}^{l_s-l_{s-1}})\leq \binom{d-l_1}{r_2}\frac{\deg_{l_2}(f^{m_1-m_2})}{\deg_{l_1}(f^{m_1-m_2})}(L_{m_1}^{l_1}\cdots L_{m_s}^{l_s-l_{s-1}}).
\end{flalign*}
Hence we have
\begin{flalign*}
(L_{m_1}^{l_1}\cdots L_{m_s}^{l_s-l_{s-1}})\geq\binom{d-l_1}{r_2}^{-1}\frac{\deg_{l_1}(f^{m_1-m_2})}{\deg_{l_2}(f^{m_1-m_2})}(L_{m_1}^{l_2}\cdot L_{m_3}^{l_3-l_2}\cdots L_{m_s}^{l_s-l_{s-1}}).
\end{flalign*}
We get the second inequality by induction, which concludes the proof.
\endproof	

\medskip

By Proposition \ref{proupperboundmixdeg}, we get the following corollary directly.

\begin{cor}\label{cormixddyna}
For every $\delta\in (0,1)$, there is a constant $D_{\delta}\geq 1$ such that 
$$D_{\delta}^{-1}\delta^{m_1}\prod_{i=1}^s\la_{l_i}(f)^{m_i-m_{i+1}}\leq (L_{m_1}^{r_1}\cdots L_{m_{s}}^{r_s})\leq D_{\delta}\delta^{-m_1}\prod_{i=1}^s\la_{l_i}(f)^{m_i-m_{i+1}}.$$
where $l_i:=r_1+\dots+r_i.$
\end{cor}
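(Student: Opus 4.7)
The plan is to substitute, into both inequalities of Proposition \ref{proupperboundmixdeg}, the following two-sided asymptotic control on each degree sequence $\deg_{j,L} f^n$ for $j\in\{0,\dots,d\}$: (a) by the submultiplicative inequality \eqref{equsubaddi} and Fekete's lemma applied to the sequence $\binom{d}{j}\deg_j f^n/(L^d)$, one has $\deg_j f^n\geq \frac{(L^d)}{\binom{d}{j}}\la_j^n$ for every $n\geq 0$; (b) since $(\deg_j f^n)^{1/n}\to \la_j$, for every $\eta\in(0,1)$ and every $j$ there is a constant $C_{\eta,j}\geq 1$, obtained by comparing $n\geq N_\eta$ to the finite initial segment, such that $\deg_j f^n\leq C_{\eta,j}\eta^{-n}\la_j^n$ for all $n\geq 0$. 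Here the convention is $m_{s+1}:=0$.

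For the upper bound, apply (b) with $\eta=\delta$ to each of the $s$ factors in the first inequality of Proposition \ref{proupperboundmixdeg}. The exponents of $\delta^{-1}$ telescope into $\sum_{i=1}^s(m_i-m_{i+1})=m_1$, producing the desired $\delta^{-m_1}$. Since the tuple $(l_i)$ takes values in the finite set $\{0,\dots,d\}^s$ with $s\leq d$, the accumulated multiplicative constant $(L^d)\prod_i \binom{d}{l_i}C_{\delta,l_i}/(L^d)$ is uniformly bounded, giving a single $D_\delta$ independent of $(m_i,r_i,s)$.

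For the lower bound, apply (a) to each numerator $\deg_{l_i}(f^{m_1-m_{i+1}})$ and to $\deg_d(f^{m_1})$, and apply (b) (with some $\eta$ to be chosen) to each denominator $\deg_{l_{i+1}}(f^{m_1-m_{i+1}})$. A short telescoping computation, using $l_s=d$, gives the identity
\[
\la_d^{m_1}\prod_{i=1}^{s-1}\left(\frac{\la_{l_i}}{\la_{l_{i+1}}}\right)^{m_1-m_{i+1}}=\prod_{i=1}^s\la_{l_i}^{m_i-m_{i+1}},
\]
which matches the target exponents on the right-hand side of the corollary. The accumulated factor $\eta^{\sum_{i=1}^{s-1}(m_1-m_{i+1})}$ has exponent at most $(d-1)m_1$, so choosing $\eta:=\delta^{1/\max(1,d-1)}$ turns it into a quantity bounded below by $\delta^{m_1}$; the remaining multiplicative constant, involving $\binom{d-l_i}{r_{i+1}}^{-1}$, $\binom{d}{l_i}^{-1}$, and $C_{\eta,l_{i+1}}^{-1}$, is again bounded uniformly over the finite set of possible tuples $(l_i,r_i)$.

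No serious obstacle arises: the argument is pure bookkeeping combining Proposition \ref{proupperboundmixdeg} with Fekete's lemma and the definition of dynamical degrees. The only two delicate points are (i) choosing the auxiliary $\eta$ small enough so that the accumulated power of $\eta$ compresses into a single $\delta^{m_1}$, and (ii) checking the telescoping identity above; both are routine once the convention $m_{s+1}:=0$ is fixed.
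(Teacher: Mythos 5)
Your proof is correct and follows the paper's intended route: the paper deduces the corollary directly from Proposition \ref{proupperboundmixdeg}, and your argument supplies exactly that deduction — the two-sided bounds $\frac{(L^d)}{\binom{d}{j}}\la_j^n\leq \deg_j f^n\leq C_{\eta,j}\eta^{-n}\la_j^n$ coming from submultiplicativity (Fekete) and the definition of $\la_j$, the telescoping identity, and the choice of the auxiliary $\eta$ in terms of $\delta$. Nothing further is needed.
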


\subsection{Recursive inequalities}\label{subsectionrecrel}
For two functions $\theta_1,\theta_2: \Z_{\geq 0}\to \R_{>0}$, define $\theta_1\gtrsim \theta_2$ if 
$$\limsup\limits_{n\to \infty} (\theta_2/\theta_1)^{1/n}\leq 1.$$
This defines a partial ordering on the space of functions from $\Z_{\geq 0}$ to $\R_{>0}.$
Define $\theta_1\approx \theta_2$ if $\theta_1\gtrsim \theta_2$ and $\theta_2\gtrsim \theta_1$. This is an equivalence relation.

\medskip

\begin{thm}\label{thmmaincompudegany}For $r_1,r_2\geq 0$ with $r_1+r_2+1\leq d$, set $$t:=\min\{i\geq 1|\,\ \mu_{r_1+1+i}\mu_{r_1+r_2+1+i}<\mu_{r_1+1}\mu_{r_1+r_2+1}\}.$$
Set $$\eta_{r_1,r_2}:= \mu_{r_1+t}\mu_{r_1+r_2+t+1}.$$
Then for every $\epsilon\in (0,1)$, there is $m_{\epsilon}>0$, such that for every $m\geq m_{\epsilon},$
$$\frac{(L_{2m}+\eta_{r_1,r_2}^mL)^{d-r_2}\cdot L_m^{r_2}}{\mu_{r_1+1}^{m}(L_{2m}+\eta_{r_1,r_2}^mL)^{d-r_2-1}\cdot L_m^{r_2+1}}>(d-r_2)\epsilon^m.$$
In particular, we have 
$$L_{2m}\cdot L_m^{r_2}+\eta_{r_1,r_2}^mL\cdot L_{m}^{r_2}>_{n}\epsilon^m\mu_{r_1+1}^{m}L_m^{r_2+1}.$$
\end{thm}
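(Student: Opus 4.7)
Set $p := d-r_2$, $A := L_{2m}+\eta^m L$ with $\eta := \eta_{r_1,r_2}$, and abbreviate $P := A^p\cdot L_m^{r_2}$, $Q := A^{p-1}\cdot L_m^{r_2+1}$. My strategy is to prove the main ratio inequality $P/Q > p\epsilon^m\mu_{r_1+1}^m$ by a direct term-by-term comparison: expand both $P$ and $Q$ via the binomial theorem, estimate each summand using the two-sided asymptotic bounds on mixed degrees from Corollary~\ref{cormixddyna}, isolate the dominant summands by exploiting the precise structure of $t$ and $\eta$, and divide.

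\textbf{Binomial expansion and rate of each term.} The binomial theorem gives $P=\sum_{k=0}^{p}\binom{p}{k}\eta^{m(p-k)}\bigl(L_{2m}^k\cdot L^{p-k}\cdot L_m^{r_2}\bigr)$ and the analogous expression for $Q$. For every $\delta\in(0,1)$, Corollary~\ref{cormixddyna} furnishes $D_\delta\geq 1$ such that each mixed intersection $\bigl(L_{2m}^k\cdot L^{p-k}\cdot L_m^{r_2}\bigr)$ is sandwiched between $D_\delta^{-1}\delta^{2m}\la_k^m\la_{k+r_2}^m$ and $D_\delta\delta^{-2m}\la_k^m\la_{k+r_2}^m$ (with $\la_0:=1$; boundary cases $k=0,k=p$ handled analogously through the formula $\la_{l_i}^{m_i-m_{i+1}}$). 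Thus the per-$m$ exponential rate of the $k$-th $P$-summand is $a_k:=\eta^{p-k}\la_k\la_{k+r_2}$ and of the $k$-th $Q$-summand is $b_k:=\eta^{p-1-k}\la_k\la_{k+r_2+1}$, with successive ratios $a_{k+1}/a_k=\mu_{k+1}\mu_{k+r_2+1}/\eta$ and $b_{k+1}/b_k=\mu_{k+1}\mu_{k+r_2+2}/\eta$.

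\textbf{Identification of the dominant indices.} The definition of $t$ forces the chain equalities $\mu_{r_1+1}=\cdots=\mu_{r_1+t}$ and $\mu_{r_1+r_2+1}=\cdots=\mu_{r_1+r_2+t}$, so $\mu_{k+1}\mu_{k+r_2+1}=\mu_{r_1+1}\mu_{r_1+r_2+1}$ for $k\in\{r_1,\dots,r_1+t-1\}$. Rewriting $\eta=\max\{\mu_{r_1+1}\mu_{r_1+r_2+t+1},\,\mu_{r_1+t+1}\mu_{r_1+r_2+1}\}$ using those equalities, one checks on the one hand that $\eta<\mu_{r_1+1}\mu_{r_1+r_2+1}$ (from the strict drop at index $t$) and on the other that $\mu_{r_1+t+1}\mu_{r_1+r_2+t+1}\leq\eta$ (each factor is dominated by the corresponding factor in one of the two candidates). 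Hence $a_k$ is strictly increasing on $\{0,\dots,r_1+t\}$ and non-increasing beyond, placing the $P$-dominant at $k=r_1+t$; a parallel argument with $\mu_{r_1+t}\mu_{r_1+r_2+t+1}=\mu_{r_1+1}\mu_{r_1+r_2+t+1}\leq\eta$ puts the $Q$-dominant at $k=r_1+t-1$. The ratio of dominants is
$$\frac{\binom{p}{r_1+t}a_{r_1+t}^m}{\binom{p-1}{r_1+t-1}b_{r_1+t-1}^m}=\frac{p}{r_1+t}\cdot\mu_{r_1+t}^m=\frac{p}{r_1+t}\mu_{r_1+1}^m,$$
using $\mu_{r_1+t}=\mu_{r_1+1}$.

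\textbf{Assembly, conclusion, and the ``in particular''.} All summands are non-negative, so $P\geq D_\delta^{-1}\delta^{2m}\cdot(\text{asymptotic }P\text{-dominant})$ and $Q\leq p\cdot D_\delta\delta^{-2m}\cdot(\text{asymptotic }Q\text{-maximum})$; division yields $P/Q\geq D_\delta^{-2}\delta^{4m}\mu_{r_1+1}^m/(r_1+t)$. Choosing $\delta$ with $\delta^4>\epsilon$ (e.g.\ $\delta:=\epsilon^{1/5}$) makes $(\delta^4/\epsilon)^m\to\infty$, so for all $m\geq m_\epsilon$ large enough the bound $D_\delta^{-2}\delta^{4m}/(r_1+t)>p\epsilon^m$ holds, giving the main inequality. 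The cocycle-level ``in particular'' statement is obtained by running the same term-by-term comparison with an arbitrary nef test class $H$ of codimension $p-1$ in place of $A^{p-1}$, combined with Siu's inequality (Theorem~\ref{thmsiuincodimo}) to control the mixed intersections that do not directly admit the asymptotic estimate of Corollary~\ref{cormixddyna}. The hardest step is the precise identification of the dominant indices: one must carefully match the chain of $\mu$-equalities against the two candidates in the max defining $\eta$, handle edge cases (ties that extend beyond $r_1+t$, or $r_1+t$ reaching the summation boundary), and keep a tight accounting of the binomial and $D_\delta$ constants to ensure that the explicit factor $(d-r_2)\epsilon^m$ survives in the final ratio.
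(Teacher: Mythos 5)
Your proposal is correct and follows essentially the same route as the paper's proof: expand $(L_{2m}+\eta^m L)^{d-r_2}\cdot L_m^{r_2}$ and $(L_{2m}+\eta^m L)^{d-r_2-1}\cdot L_m^{r_2+1}$ binomially, estimate every summand by Corollary \ref{cormixddyna}, locate the dominant terms at $j=r_1+t$ and $j=r_1+t-1$ respectively, use the chain equalities forced by the definition of $t$ (in particular $\mu_{r_1+t}=\mu_{r_1+1}$) to compare them, and invoke Siu's inequality (Theorem \ref{thmsiuincodimo}) for the cocycle-level ``in particular''. One small inaccuracy: your intermediate claim $\eta_{r_1,r_2}<\mu_{r_1+1}\mu_{r_1+r_2+1}$ (hence strict increase of $a_k$ up to $k=r_1+t$) can fail when $r_2\geq 1$ --- e.g.\ for a decreasing sequence with $\mu_1=\dots=\mu_5=2$, $\mu_6=\tfrac12$ and $r_1=0$, $r_2=3$ one gets $t=2$ and $\eta_{r_1,r_2}=\mu_3\mu_5=4=\mu_1\mu_4$ --- but this is harmless, since your assembly only uses $\eta_{r_1,r_2}\leq\mu_{r_1+1}\mu_{r_1+r_2+1}$ and the (possibly non-strict) maximality of the terms at $j=r_1+t$ and $j=r_1+t-1$, which do hold.
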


\begin{rem}\label{remstricineqeta}
We note that $\eta_{r_1,r_2}$ in Theorem \ref{thmmaincompudegone} is at most $\mu_{r_1+1}\mu_{r_1+r_2+1}$. Moreover, if $\mu_{r_1+1}=\mu_{r_1+r_2+1}$  or $\mu_{r_1+r_2+1}>\mu_{r_1+r_2+2}$, then 
$\eta_{r_1,r_2}$ is strictly leas than $\mu_{r_1+1}\mu_{r_1+r_2+1}$.
\end{rem}

\proof[Proof of Theorem \ref{thmmaincompudegany}]
Set $\eta:=\eta_{r_1,r_2}$.
We note that $\mu_{r_1+1+i}$ and $\mu_{r_1+r_2+1+i}$ are constant when $i\in[0,t-1].$
In particular $\mu_{r_1+1}=\mu_{r_1+t}.$

\medskip
 The decreasing of $\mu_i$ implies that $\eta^{(d-r_2-j)m}\la_j^m\la_{j+r_2}^m, j=0,\dots, d-r_2$ takes maximal value when $j=r_1+t.$
By  Corollary \ref{cormixddyna}, we have 
\begin{flalign}
(L_{2m}+\eta^mL)^{d-r_2}\cdot L_m^{r_2}\approx&\sum_{j=0}^{d-r_2}\eta^{(d-r_2-j)m}(L_{2m}^{j}\cdot L_{m}^{r_2} \cdot L^{d-j})\notag\\
\approx&\max_{j=0}^{d-r_2}\eta^{(d-r_2-j)m}\la_j^m\la_{j+r_2}^m\notag\\
\label{equeqleftd}\approx& \eta^{(d-r_2-r_1-t)m}\la_{r_1+t}^m\la_{r_1+r_2+t}^m
\end{flalign}

By Corollary \ref{cormixddyna}, we have
\begin{flalign*}
\mu_{r_1+1}^{m}(L_{2m}+\eta^mL)^{d-r_2-1}\cdot L_m^{r_2+1}\approx&\mu_{r_1+1}^{m}\sum_{j=0}^{d-r_2-1}\eta^{(d-r_2-1-j)m}(L_{2m}^{j}\cdot L_{m}^{r_2+1} \cdot L^{d-j})\notag\\
\approx&\mu_{r_1+1}^{m}\sum_{j=0}^{d-r_2-1}\eta^{(d-r_2-1-j)m}\la_j^m\la_{j+r_2+1}^m.
\end{flalign*}
The maximal taken when $j=r_1+t-1.$ Then we have 
\begin{flalign}
\mu_{r_1+1}^{m}(L_{2m}+\eta^mL)^{d-r_2-1}\cdot L_m^{r_2+1}\approx& \mu_{r_1+1}^{m}\eta^{(d-r_2-r_1-t)m}\la_{r_1+t-1}^m\la_{r_1+t+r_2}^m\notag\\
=& \mu_{r_1+t}^{m}\eta^{(d-r_2-r_1-t)m}\la_{r_1+t-1}^m\la_{r_1+t+r_2}^m\notag\\
\label{equartlm}=& \eta^{(d-r_2-r_1-t)m}\la_{r_1+t}^m\la_{r_1+t+r_2}^m
\end{flalign}
By (\ref{equeqleftd}) and (\ref{equartlm}), we conclude the proof by Theorem \ref{thmsiuincodimo}.
\endproof

\medskip

For every $i=1,\dots,d$, define $U(i):=\max\{j=0,\dots, d|\,\, \mu_j=\mu_i\}.$
If we take $r_1=i-1$ and $r_2=0$ in Theorem \ref{thmmaincompudegany}, we have $\eta_{r_1,r_2}=\mu_i\mu_{U(i)+1}$ and we get the following special case for line bundles.
 
\begin{thm}\label{thmmaincompudegone}For $i=1,\dots, d$ and every $\epsilon\in (0,1)$, there is $m_{\epsilon}>0$, such that for every $m\geq m_{\epsilon},$
$$L_{2m}+\mu_i^{m}\mu_{U(i)+1}^mL-\epsilon^m\mu_i^{m}L_m$$ is big.
\end{thm}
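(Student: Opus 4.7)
The strategy is to derive Theorem \ref{thmmaincompudegone} as the codimension-one incarnation of Theorem \ref{thmmaincompudegany} applied with $r_1 = i-1$ and $r_2 = 0$. The apparently weaker ``$>_n$'' conclusion of that theorem upgrades here to genuine bigness, since the proof of Theorem \ref{thmmaincompudegany} in codimension one reduces to a single invocation of Siu's inequality (Theorem \ref{thmsiuincodimo}), whose final clause delivers a $>_{big}$ statement.

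The first step is to compute $t$ and $\eta_{r_1, r_2}$ for this choice of parameters. With $r_1 + 1 = r_1 + r_2 + 1 = i$, the defining condition for $t$ collapses to $\mu_{i+t}^2 < \mu_i^2$, which by positivity and monotonicity of the $\mu_j$ is equivalent to $\mu_{i+t} < \mu_i$. By the very definition $U(i) = \max\{j : \mu_j = \mu_i\}$, the smallest such $t$ is $U(i) - i + 1$. Therefore $r_1 + t = U(i)$, so $\mu_{r_1+t} = \mu_i$ and $\mu_{r_1+t+1} = \mu_{U(i)+1}$; since $r_2 = 0$ the two arguments of the $\max$ defining $\eta_{r_1, r_2}$ collapse to the common value, yielding $\eta_{i-1, 0} = \mu_i \mu_{U(i)+1}$, as the theorem requires.

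The second step is to apply Theorem \ref{thmmaincompudegany}. Set $M := L_{2m} + \mu_i^m \mu_{U(i)+1}^m L$, which is nef as a non-negative combination of the nef classes $L_{2m}$ and $L$. The theorem gives, for every $\epsilon_0 \in (0,1)$ and every sufficiently large $m$, the ratio estimate
$$\frac{(M^d)}{d\,\bigl(L_m\cdot M^{d-1}\bigr)} > \epsilon_0^m \mu_i^m.$$
Now Theorem \ref{thmsiuincodimo} applied to the nef pair $(M, L_m)$ says that for any $\epsilon' \in (0,1)$ the class $d\frac{(L_m\cdot M^{d-1})}{(M^d)}M - \epsilon' L_m$ is big; dividing by the positive scalar $d(L_m \cdot M^{d-1})/(M^d)$ (which preserves bigness) shows that $M - \epsilon' \cdot \frac{(M^d)}{d(L_m\cdot M^{d-1})} L_m$ is big. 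Combining with the ratio bound yields that $M - \epsilon' \epsilon_0^m \mu_i^m L_m$ is big; for a prescribed $\epsilon \in (0,1)$ we simply choose $\epsilon_0 \in (\epsilon, 1)$ and $\epsilon' \in (0,1)$ fixed, so that for all $m$ large enough $\epsilon' \epsilon_0^m \geq \epsilon^m$, and conclude that $M - \epsilon^m \mu_i^m L_m = L_{2m} + \mu_i^m \mu_{U(i)+1}^m L - \epsilon^m \mu_i^m L_m$ is big (using that adding a nef class to a big class produces a big class).

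The main point requiring attention is purely terminological: Theorem \ref{thmmaincompudegany} is formulated using the numerical dominance relation $>_n$, which in codimension $\geq 2$ is strictly weaker than bigness of the difference. In the codimension-one specialization we carry out here, however, the concluding invocation of Theorem \ref{thmsiuincodimo} produces a $>_{big}$ statement, and that is precisely the upgrade that converts numerical positivity into the bigness assertion of Theorem \ref{thmmaincompudegone}. Beyond careful bookkeeping of the sub-exponential error factors absorbed into $\epsilon$, no new ingredient is required.
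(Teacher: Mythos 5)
Your proposal is correct and is essentially the paper's own derivation: the paper obtains Theorem \ref{thmmaincompudegone} precisely by specializing Theorem \ref{thmmaincompudegany} to $r_1=i-1$, $r_2=0$, with the same computation $t=U(i)-i+1$ and $\eta_{i-1,0}=\mu_i\mu_{U(i)+1}$. Your explicit upgrade from the ratio estimate to bigness via Theorem \ref{thmsiuincodimo} (absorbing the sub-exponential factors into $\epsilon$) is exactly the step the paper leaves implicit, and it is carried out correctly.
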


A weaker version of Theorem \ref{thmmaincompudegone} was proved in \cite[Proposition 3.5]{Matsuzawa}, when $\la_i=\max_{j=0}^d \la_j$.

\rem\label{remimprorecu}
As $\mu_i=\mu_{U(i)}$ and $\mu_{i+1}\geq \mu_{U(i)+1}$, Theorem \ref{thmmaincompudegone} can be reformulated in the following form:
For $i=1,\dots, d$ and every $\epsilon\in (0,1)$, there is $m_{\epsilon}>0$, such that for every $m\geq m_{\epsilon},$ 
$$L_{2m}+\mu_i^{m}\mu_{i+1}^mL-\epsilon^m\mu_i^{m}L_m$$ is big. 
\endrem

\medskip

We define three families of conditions on $X,f,L$ which depend only on some intersection numbers.
Let $\alpha_1,\dots, \alpha_d\in \R_{>0}$ be a sequence of decreasing numbers. Let $\gamma,\epsilon \in (0,1)$, and $m\in \Z_{\geq 1}.$
Set $\beta_i:=\prod_{j=1}^{i}\alpha_i$.

\begin{defi}For $i=1,\dots,d$, we say that $X,f,L$ has condition
$I_i(\alpha_1,\dots, \alpha_d; \gamma; \epsilon; m)$ if for every $j=0,\dots,i-1$, we have
$$\frac{(L_{2m}+\alpha_{j+1}^m\alpha_i^m\gamma^mL)^{d-i+j+1}\cdot L_m^{i-j-1}}{\alpha_{j+1}^{m}(L_{2m}+\alpha_{j+1}^m\alpha_i^m\gamma^mL)^{d-i+j}\cdot L_m^{i-j}}>(d-i+j+1)\epsilon^m.$$
\end{defi}

\medskip

\begin{defi}For $i=1,\dots,d$, define
$$B(\alpha_1,\dots, \alpha_d; \gamma; \epsilon; m):=\sum_{j=0}^{i-1}\binom{d}{j}\binom{d}{i-1}\frac{\alpha_i^m\gamma^m}{\epsilon^{m(j+1)}\beta_j^m}\frac{\deg_j(f^m)\deg_{i-1}(f^m)}{(L^d)^2}.$$ 
We say that $X,f,L$ has condition
$J_i(\alpha_1,\dots, \alpha_d; \gamma; \epsilon; m)$ if 
$$B(\alpha_1,\dots, \alpha_d; \gamma; \epsilon; m)<\epsilon^{2mi}\beta_i^m(1-\epsilon^{mi}).$$
\end{defi}

By Theorem \ref{thmsiuincodimo}, If $(X,f,L)$ has condition $I_i(\alpha_1,\dots, \alpha_d; \gamma; \epsilon; m)$, then 
$$L_{2m}\cdot L_m^{i-j-1}+\alpha_{j+1}^m\alpha_i^m\gamma^m L_{m}^{i-j-1}\cdot L>_{n}\epsilon^m \alpha_{j+1}^mL_{m}^{i-j}.$$

\begin{defi}
For $i=1,\dots,d$ and $N\geq 0$, we say that $X,f,L$ has condition $K_i(\alpha_1,\dots, \alpha_d; \gamma; \epsilon; m;N)$
if $$\deg_{i}(f^{m(N+1)})>B(\alpha_1,\dots, \alpha_d; \gamma; \epsilon; m)\epsilon^{-mi}\deg_{i}(f^{mN}).$$
\end{defi}

\medskip

Condition $I_i(\alpha_1,\dots, \alpha_d; \gamma; \epsilon; m)$ and $J_i(\alpha_1,\dots, \alpha_d; \gamma; \epsilon; m)$ only depend on the top intersection numbers using $L_{2m}, L_m, L$. 
Condition $K_i(\alpha_1,\dots, \alpha_d; \gamma; \epsilon; m;N)$ only depends on the top intersection numbers of $L_{Nm}, L_{(N+1)m}, L_{2m}, L_m, L$. 

\medskip

\begin{rem}\label{remopencd} If we fix $X,f,L,i,m$ and $n$, $I_i(\alpha_1,\dots, \alpha_d; \gamma, \epsilon, m), J_i(\alpha_1,\dots, \alpha_d; \gamma, \epsilon, m)$ and $K_i(\alpha_1,\dots, \alpha_d; \gamma, \epsilon, m; n)$ are open conditions on $(\alpha_1,\dots, \alpha_d, \gamma, \epsilon)$.
\end{rem}

\begin{lem}\label{lemcondiimpliesdyna}If $(X,f,L)$ has conditions $I_i(\alpha_1,\dots, \alpha_d; \gamma; \epsilon; m)$.
Then for every $n\geq 0$, we have 
$$\deg_i(f^{m(n+2)})+B\epsilon^{mi}\beta_i^m\deg_i(f^{mn})\geq \epsilon^{mi}\beta_i^m\deg_i(f^{m(n+1)}),$$
where $B:=B(\alpha_1,\dots, \alpha_d; \gamma; \epsilon; m).$
Assume further that the conditions $J_i(\alpha_1,\dots, \alpha_d; \gamma; \epsilon; m)$ and $K_i(\alpha_1,\dots, \alpha_d; \gamma; \epsilon; m;N)$ are  satisfied for some $N\geq 0$. Then 
for every $n\geq 1$, we have 
$$\deg_i (f^{m(N+n)})-\epsilon^{-mi}B\deg_{i}(f^{m(N+n-1)})\geq (\epsilon^{2mi}\beta_i^m)^{n-1}(\deg_{i}f^{m(N+1)}-\epsilon^{-mi}B\deg_{i}f^{mN}).$$
In particular, $K_i(\alpha_1,\dots, \alpha_d; \gamma; \epsilon; m;N+n)$ is satisfied for every $n\geq 0$ and 
we have 
$$\la_i\geq \epsilon^{2i}\beta_i.$$
\end{lem}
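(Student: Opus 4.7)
The plan is to establish the three claims in order: deduce from condition $I_i$ a one-step recursive inequality on the degree sequence $\deg_i f^{mn}$, then invoke Lemma \ref{lemrecuineq} with the calibration provided by $J_i$ and $K_i$.

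For the first claim, I start from the $>_n$ inequalities stated in the paper immediately after the definition of $J_i$: for every $j\in\{0,\dots,i-1\}$, condition $I_i$ gives
\[
L_{2m}\cdot L_m^{i-j-1}+\alpha_{j+1}^m\alpha_i^m\gamma^m L\cdot L_m^{i-j-1}>_n \epsilon^m\alpha_{j+1}^m L_m^{i-j}.
\]
Rearranging the $j$-th one as an upper bound on $L_m^{i-j}$ and iteratively substituting (at each step $j$, multiply through by $L_{2m}^j$ and use the next inequality to eliminate $L_m^{i-j}$), I reach the telescoping bound
\[
L_m^i<_n \epsilon^{-im}\beta_i^{-m}L_{2m}^i+\sum_{j=0}^{i-1}\epsilon^{-(j+1)m}\beta_j^{-m}\alpha_i^m\gamma^m\,L_{2m}^j\cdot L\cdot L_m^{i-j-1}.
\]
Pulling back by $(f^{mn})^*$ (which preserves $<_n$) and intersecting with the nef class $L^{d-i}$, the main term contributes $\epsilon^{-im}\beta_i^{-m}\deg_i f^{m(n+2)}$, while each error term yields a mixed intersection $(L_{m(n+2)}^j\cdot L_{m(n+1)}^{i-j-1}\cdot L_{mn}\cdot L^{d-i})$.

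The key technical step is to bound each mixed intersection by
\[
\binom{d}{j}\binom{d}{i-1}\frac{\deg_j f^m\,\deg_{i-1}f^m\,\deg_i f^{mn}}{(L^d)^2},
\]
so that summing over $j$ reproduces exactly $B\cdot\deg_i f^{mn}$ with $B$ as defined just before the lemma. I obtain this bound by applying Theorem \ref{thmsiuin} on $X$ to $L_m^j$ (yielding $L_m^j\leq_n \binom{d}{j}\deg_j f^m/(L^d)\cdot L^j$) and pulling back by $(f^{m(n+1)})^*$ to replace $L_{m(n+2)}^j$ by a multiple of $L_{m(n+1)}^j$; similarly, Siu applied to $L_m^{i-1}$ and pulled back by $(f^{mn})^*$ replaces $L_{m(n+1)}^{i-1}$ by a multiple of $L_{mn}^{i-1}$, leaving the residual intersection $(L_{mn}^i\cdot L^{d-i})=\deg_i f^{mn}$. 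Substituting and multiplying through by $\epsilon^{mi}\beta_i^m$ gives the first claim.

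For the second claim, set $a_k:=\deg_i f^{mk}$, $\alpha:=\epsilon^{2mi}\beta_i^m$, $\beta:=\epsilon^{-mi}B$, and $\gamma:=\epsilon^{mi}\beta_i^m$. The first claim is the recursion $a_{k+2}+\alpha\beta\,a_k\geq \gamma a_{k+1}$ for all $k\geq 0$; condition $J_i$ is equivalent to $\gamma\geq \alpha+\beta$; and condition $K_i(\alpha_1,\dots,\alpha_d;\gamma;\epsilon;m;N)$ is precisely the initial inequality $a_{N+1}>\beta a_N$. Applying Lemma \ref{lemrecuineq} to the shifted sequence $(a_{N+k})_{k\geq 0}$ produces both $a_{N+n+1}>\beta a_{N+n}$ (hence $K_i$ at $N+n$) and $a_{N+n}-\beta a_{N+n-1}\geq \alpha^{n-1}(a_{N+1}-\beta a_N)$ for every $n\geq 1$, which is the second claim. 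Since the right-hand side is strictly positive, taking $n$-th roots of $a_{N+n}$ and sending $n\to\infty$ yields $\la_i^m=\lim_n a_n^{1/n}\geq \alpha=(\epsilon^{2i}\beta_i)^m$, so $\la_i\geq \epsilon^{2i}\beta_i$.

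The main obstacle will be the mixed-intersection bound: a direct application of Proposition \ref{proupperboundmixdeg} at the four distinct time-stamps $\{m(n+2),m(n+1),mn,0\}$ would introduce a spurious $\binom{d}{i}$ factor and miss the definition of $B$. The remedy above—applying Siu on $X$ at time zero separately to $L_m^j$ and $L_m^{i-1}$ and only then pulling back—uses only codim-$j$ and codim-$(i-1)$ Siu, matching the binomial coefficients in $B$ exactly.
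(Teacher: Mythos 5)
Your proof is correct and follows essentially the paper's own route: derive the one-step recursion $\deg_i(f^{m(n+2)})+B\epsilon^{mi}\beta_i^m\deg_i(f^{mn})\geq\epsilon^{mi}\beta_i^m\deg_i(f^{m(n+1)})$ by telescoping the $I_i$-inequalities, pulling back by $(f^{mn})^*$ and intersecting with $L^{d-i}$, then feed $J_i$ (as $\gamma\geq\alpha+\beta$) and $K_i$ (as $a_{N+1}>\beta a_N$) into Lemma \ref{lemrecuineq}. The only real divergence is the bound on the error terms $(L_{m(n+2)}^{j}\cdot L_{m(n+1)}^{i-j-1}\cdot L_{mn}\cdot L^{d-i})$: the paper invokes Proposition \ref{proupperboundmixdeg} directly, which at the four timestamps produces an extra $\binom{d}{i}$ relative to the stated $B$ (and the paper's displayed bound also drops the factor $\deg_i(f^{mn})$, clearly a typo), whereas your two-step reduction (Siu in codimension $j$ between times $m(n+2),m(n+1)$, then in codimension $i-1$ between times $m(n+1),mn$, i.e.\ Lemma \ref{lemsiucomp} applied twice) reproduces exactly the binomials $\binom{d}{j}\binom{d}{i-1}$ appearing in the definition of $B$. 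This is a genuine, if minor, sharpening; it has no downstream effect since such constant factors are absorbed in the asymptotic verification of $J_i$, $K_i$ in Lemma \ref{lemrecuinequ}, and your handling of the pullback-and-intersect step is at the same level of rigor as the paper's.
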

\proof
As $I_i(\alpha_1,\dots, \alpha_d; \gamma; \epsilon; m)$ is satisfied, for every $j=0,\dots,i-1$, we have 
\begin{equation}\label{equbigseq}L_{2m}^{j+1}\cdot L_m^{i-j-1}+\alpha_{j+1}^m\alpha_i^m\gamma^mL_{2m}^j\cdot L_{m}^{i-j-1}\cdot L>_{big} \epsilon^m \alpha_{j+1}^mL_{2m}^jL_{m}^{i-j}.\end{equation}

To simplify the notations, for $u,v,w\geq 0$ with $u+v+w\leq d$, write $$D^{u,v,w}(n):=(L_{(2+n)m}^{u}\cdot L_{m(n+1)}^{v}\cdot L_{mn}^{w}\cdot L^{d-u-v-w}).$$
Apply $(f^n)^*$ to (\ref{equbigseq}) and intersect them with $L^{d-i}$, we get 
$$D^{j+1,i-j-1,0}(n)+\alpha_{j+1}^m\alpha_i^m\gamma^mD^{j,i-j-1,1}(n)> \epsilon^m \alpha_{j+1}^mD^{j,i-j,0}(n).$$
Dividing both side by $\epsilon^{(j+1)m}\beta_{j+1}^m$, we get 
$$\frac{D^{j+1,i-j-1,0}(n)}{\epsilon^{(j+1)m}\beta_{j+1}^m}+\frac{\alpha_i^m\gamma^mD^{j,i-j-1,1}(n)}{\epsilon^{(j+1)m}\beta_{j}^m}> \frac{D^{j,i-j,0}(n)}{\epsilon^{jm}\beta_{j}^m}.$$
Then we get 
$$\frac{D^{i,0,0}(n)}{\epsilon^{im}\beta_i^m}+\sum_{j=0}^{i-1}\frac{\alpha_i^m\gamma^mD^{j,i-j-1,1}(n)}{\epsilon^{(j+1)m}\beta_{j}^m}> D^{0,i,0}(n).$$
We note that $$D^{i,0,0}(n)=\deg_{i}(f^{(n+2)m}) \text{ and } D^{0,i,0}(n)=\deg_i(f^{(n+1)m}).$$
By Proposition \ref{proupperboundmixdeg}, we have 
$$D^{j,i-j-1,1}(n)\leq \binom{d}{j}\binom{d}{i-1}\frac{\deg_j(f^m)\deg_{i-1}(f^m)}{(L^d)^2}\deg_i(f^{mn}).$$
Then we get
$$\deg_i(f^{m(n+2)})+B(\alpha_1,\dots, \alpha_d; \gamma; \epsilon; m)\epsilon^{mi}\beta_i^m\deg_i(f^{mn})\geq \epsilon^{mi}\beta_i^m\deg_i(f^{m(n+1)}).$$

Now assume that conditions $J_i(\alpha_1,\dots, \alpha_d; \gamma; \epsilon; m), K_i(\alpha_1,\dots, \alpha_d; \gamma; \epsilon; m;N)$ are satisfied.
Write $$\phi:=\epsilon^{2mi}\beta_i^m \text{ and } \psi:=B(\alpha_1,\dots, \alpha_d; \gamma; \epsilon; m)\epsilon^{-mi}.$$
Condition $J_i(\alpha_1,\dots, \alpha_d; \gamma; \epsilon; m)$ implies that $$\epsilon^{mi}\beta_i^m>\phi+\psi.$$
Condition $K_i(\alpha_1,\dots, \alpha_d; \gamma; \epsilon; m; N)$ implies that
$$\deg_{i}(f^{m(N+1)})>\psi \deg_i(f^{mN}).$$
We then conclude the proof by Lemma \ref{lemrecuineq}.
\endproof

\medskip

\begin{lem}\label{lemrecuinequ}
Fix $i=1,\dots, d$. Assume that $\mu_i>\mu_{i+1}$.  Then by Remark \ref{remstricineqeta}, for every $j=0,\dots, i-1$, $\eta_{j,i-j-1}<\mu_{j+1}\mu_{i}.$
Pick $\gamma\in [\max_{j=0}^{i-1}\frac{\eta_{j,i-j-1}}{\mu_{j+1}\mu_{i}}, 1)$. Then for every $\epsilon\in (\gamma^{\frac{1}{3d}},1)$ there is $m_{\epsilon}>0$, such that for every $m\geq m_{\epsilon},$ conditions $I_i(\mu_1,\dots,\mu_d; \gamma; \epsilon; m)$, $J_i(\mu_1,\dots,\mu_d; \gamma; \epsilon; m)$ hold for $(X,f,L)$ and moreover, for every $N\geq 0$, $K_i(\mu_1,\dots, \mu_d; \gamma; \epsilon; m; N)$ is satisfied.
\end{lem}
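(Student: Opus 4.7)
The plan is to verify each of the three conditions $I_i$, $J_i$ and the base instance $K_i(\,\cdot\,;0)$ separately for $m$ large, and then to propagate $K_i(\,\cdot\,;N)$ to every $N\ge 0$ by Lemma \ref{lemcondiimpliesdyna}. Throughout set $\alpha_k=\mu_k$, so $\beta_j=\la_j$, and all estimates rely on the asymptotic control of mixed degrees in Corollary \ref{cormixddyna}.

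To verify $I_i$, fix $j\in\{0,\dots,i-1\}$ and set $r_1:=j$, $r_2:=i-j-1$, $\tau:=\mu_{j+1}\mu_i\gamma$. Using $\mu_{r_1+r_2+1}=\mu_i$ and $\gamma<1$, the hypothesis on $\gamma$ places $\tau$ in $[\eta_{r_1,r_2},\mu_{r_1+1}\mu_{r_1+r_2+1})$. In this range a short monotonicity argument using the definition of $t$ shows that the argmaxes of $k\mapsto\tau^{(d-r_2-k)m}\la_k^m\la_{k+r_2}^m$ and $k\mapsto\tau^{(d-r_2-1-k)m}\la_k^m\la_{k+r_2+1}^m$ are still at $r_1+t$ and $r_1+t-1$ respectively. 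Plugging these into the $\approx$-estimates of Corollary \ref{cormixddyna} exactly as in the proof of Theorem \ref{thmmaincompudegany} shows that the numerator and denominator of the ratio appearing in $I_i$ are both $\approx\tau^{(d-r_2-r_1-t)m}\la_{r_1+t}^m\la_{r_1+r_2+t}^m$. Hence the ratio tends to $1$, while $(d-r_2)\epsilon^m\to 0$, so $I_i$ holds for all $m\ge m_1$.

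To verify $J_i$, apply the upper bound of Corollary \ref{cormixddyna}: for any $\delta\in(0,1)$, $\deg_j(f^m)\le D_{\delta}\delta^{-m}\la_j^m$ and $\deg_{i-1}(f^m)\le D_{\delta}\delta^{-m}\la_{i-1}^m$. Cancelling $\la_j^m$, using $\mu_i\la_{i-1}=\la_i$, and estimating the finite sum by its largest term yields
\[
B\;\le\;C(d,L)\,D_{\delta}^{2}\,\bigl(\gamma/(\delta^{2}\epsilon^{i})\bigr)^{m}\la_i^m,
\]
whereas the right-hand side of $J_i$ is at least $\tfrac12\epsilon^{2mi}\la_i^m$ for $m$ large. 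Thus $J_i$ follows from $\gamma<\delta^{2}\epsilon^{3i}$. Since $\epsilon>\gamma^{1/(3d)}$ and $i\le d$ give $\gamma<\epsilon^{3d}\le\epsilon^{3i}$, one fixes $\delta<1$ close enough to $1$ (depending only on $\epsilon,\gamma,i$), and $J_i$ holds for $m\ge m_2$. For the base case $K_i(\,\cdot\,;0)$, which reads $\deg_i(f^m)>B\,\epsilon^{-mi}(L^d)$, the lower bound of Corollary \ref{cormixddyna} gives $\deg_i(f^m)\ge D_{\delta}^{-1}\delta^{m}\la_i^m$; combined with the estimate on $B$ above, the inequality reduces to $\gamma<\delta^{3}\epsilon^{2i}$. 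As before $\gamma<\epsilon^{3d}\le\epsilon^{2i}$ (using $3d\ge 2i$), so a $\delta<1$ close to $1$ works again, and $K_i(\,\cdot\,;0)$ holds for $m\ge m_3$. Setting $m_\epsilon:=\max\{m_1,m_2,m_3\}$ and invoking Lemma \ref{lemcondiimpliesdyna} with $N=0$ then yields $K_i(\,\cdot\,;N)$ for every $N\ge 0$.

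The main subtlety is the argmax verification in $I_i$: Theorem \ref{thmmaincompudegany} is stated only for $\tau=\eta_{r_1,r_2}$, whereas here $\tau=\mu_{j+1}\mu_i\gamma$ may be strictly larger. The constraint $\gamma<1$ is precisely what keeps $\tau$ below $\mu_{r_1+1}\mu_{r_1+r_2+1}$, beyond which the argmaxes would shift and the cancellation making the ratio tend to $1$ would fail. The numerical margin $3d$ in the hypothesis $\epsilon>\gamma^{1/(3d)}$ is tailored so that a single $\delta<1$ works simultaneously for $J_i$ (which demands $\gamma<\delta^{2}\epsilon^{3i}$) and for $K_i(\,\cdot\,;0)$ (which demands $\gamma<\delta^{3}\epsilon^{2i}$) uniformly in $i\le d$.
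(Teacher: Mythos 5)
Your proposal is correct and follows essentially the same route as the paper: verify $I_i$ via the argmax analysis of mixed degrees as in Theorem \ref{thmmaincompudegany}, bound $B(\mu_1,\dots,\mu_d;\gamma;\epsilon;m)$ by $\approx(\gamma/\epsilon^{i})^m\la_i^m$ using Corollary \ref{cormixddyna} to get $J_i$ and $K_i(\,\cdot\,;0)$ from $\gamma<\epsilon^{3d}$, and then propagate to all $N$ by Lemma \ref{lemcondiimpliesdyna}. Your only departure is that you explicitly rerun the argmax computation for $\tau=\mu_{j+1}\mu_i\gamma\in[\eta_{j,i-j-1},\mu_{j+1}\mu_i)$ rather than quoting Theorem \ref{thmmaincompudegany} verbatim (which is stated only for $\tau=\eta_{j,i-j-1}$); this is a legitimate and slightly more careful rendering of the same step, since the ratio in $I_i$ is not obviously monotone in that constant.
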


\proof
The case $r_1=j, r_2=i-j-1$ of Theorem \ref{thmmaincompudegany} implies that the condition $I_i(\mu_1,\dots,\mu_d; \gamma; \epsilon; m)$ holds for $m\gg 0.$

As
\begin{flalign*}
\binom{d}{j}\binom{d}{j-1}\frac{\mu_i^m\gamma^m}{\epsilon^{m(j+1)}\la_j^m}\frac{\deg_j(f^m)\deg_{i-1}(f^m)}{(L^d)^2}
\approx& \frac{\mu_i^m\gamma^m}{\epsilon^{m(j+1)}\la_j^m}\la_j^m\la_{i-1}^m\\
\lesssim& \frac{\gamma^m}{\epsilon^{md}}\la_{i}^m,
\end{flalign*}
we get
\begin{equation}\label{equationbesti}
B(\mu_1,\dots, \mu_d; \gamma; \epsilon; m)\lesssim  \frac{\gamma^m}{\epsilon^{md}}\la_{i}^m.
\end{equation}

Then we have
\begin{flalign*}\frac{B(\mu_1,\dots, \mu_d; \gamma; \epsilon; m)}{\epsilon^{2mi}\la_i^m(1-\epsilon^{mi})}\lesssim(\frac{\gamma}{\epsilon^{3d}})^m.
\end{flalign*}
Since $\frac{\gamma}{\epsilon^{3d}}<1$, $J_i(\mu_1,\dots,\mu_d; \gamma; \epsilon; m)$ holds for $m\gg 0.$

By (\ref{equationbesti}),
\begin{flalign*}
\frac{\deg_{i}(f^{m})}{B(\alpha_1,\dots, \alpha_d; \gamma; \epsilon; m)\epsilon^{-mi}}\gtrsim \frac{\la_i^m\epsilon^{md}}{\frac{\gamma^m}{\epsilon^{md}}\la_{i}^m}=(\frac{\epsilon^{2d}}{\gamma})^m.
\end{flalign*}
So $K_i(\alpha_1,\dots, \alpha_d; \gamma; \epsilon; m; 0)$ is holds for $m\gg 0$.
Hence there is $m_{\epsilon}>0$, such that for every $m\geq m_{\epsilon},$ conditions $I_i(\mu_1,\dots,\mu_d; \gamma; \epsilon; m)$, $J_i(\mu_1,\dots,\mu_d; \gamma; \epsilon; m)$ and $K_i(\alpha_1,\dots, \alpha_d; \gamma; \epsilon; m; 0)$ hold for $(X,f,L)$.
By Lemma \ref{lemcondiimpliesdyna}, for every $m\geq m_{\epsilon}$ and $N\geq 0$, $K_i(\alpha_1,\dots, \alpha_d; \gamma; \epsilon; m; N)$ holds.
\endproof

\section{Algorithm to compute the dynamical degrees}\label{sectionalgdeg}
We keep the notations of Section \ref{secrecurdeg}.
Let $\bk$ be a field. Let $X$ be a projective variety of dimension $d$ over $\bk.$ Let $f: X\dashrightarrow X$ be a dominant rational self-map. Let $L$ be a big and nef line bundle in $\widetilde{\Pic}(X)$. 

\medskip

The aim of this section is to give an algorithm to compute the dynamical degrees to arbitrary precision.
In other words, we will give an algorithm, such that for any given number $l\in \Z_{>0},$ 
the algorithm gives a rational number $\tilde{\la}$ such that $\la_i\in (\tilde{\la}, \tilde{\la}+\frac{1}{2^l}).$
For a given precision $l$, the algorithm will stop in finitely many steps and it only uses certain intersection numbers between $L_n, n\geq 0$.

\subsection{Upper and lower bounds}
By \cite[Corollary 1.3]{Jiang2023} (or Theorem \ref{thmsiuin}), for every $m,n\geq 0$,  we have
\begin{equation}\label{equsubaddi}\deg_i (f^{m+n})\leq \frac{\binom{d}{i}}{(L^d)}\deg_i (f^{m})\deg_i (f^{n}).
\end{equation}
Then we get the following fact:
 \begin{fact}\label{factlaabove}
The sequence $$(\frac{\binom{d}{i}}{(L^d)}\deg_i (f^{n}))^{1/n}, n\geq 0$$ tends to $\la_i$ from above. 
 \end{fact}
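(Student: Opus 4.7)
The plan is to deduce the fact directly from Fekete's lemma applied to a suitable submultiplicative renormalization of the degree sequence.

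First I would set $a_n := \frac{\binom{d}{i}}{(L^d)}\deg_i(f^n)$ for $n \geq 0$. The submultiplicativity inequality (\ref{equsubaddi}), which is a consequence of Lemma \ref{lemsiucomp}, reads
\[
\deg_i(f^{m+n}) \leq \frac{\binom{d}{i}}{(L^d)}\deg_i(f^m)\deg_i(f^n),
\]
which, after multiplying both sides by $\binom{d}{i}/(L^d)$, becomes precisely
\[
a_{m+n} \leq a_m a_n.
\]
Thus $(a_n)_{n \geq 0}$ is submultiplicative.

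Next I would invoke Fekete's lemma (recorded as \textbf{Fekete's Lemma} just above), which gives both the existence of $\lim_{n \to \infty} a_n^{1/n}$ and the sharper statement that the limit equals $\inf_{n \geq 1} a_n^{1/n}$. Since $(\binom{d}{i}/(L^d))^{1/n} \to 1$ as $n \to \infty$, the limit equals
\[
\lim_{n \to \infty}(\deg_i f^n)^{1/n} = \la_i,
\]
by the definition of the $i$-th dynamical degree recalled at the beginning of Section \ref{secrecurdeg}.

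Finally, because $\la_i = \inf_{n \geq 1} a_n^{1/n}$, every term of the sequence satisfies $a_n^{1/n} \geq \la_i$, which is exactly the assertion that the sequence $\bigl(\frac{\binom{d}{i}}{(L^d)}\deg_i(f^n)\bigr)^{1/n}$ tends to $\la_i$ from above. There is no real obstacle here; the only point worth noting is that Fekete's lemma does \emph{not} give monotonicity of the sequence, merely that each term bounds the limit from above, which is the content of "from above" in this statement.
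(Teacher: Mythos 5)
Your argument is correct and is essentially the paper's own justification: the paper derives this fact directly from Lemma \ref{lemsiucomp} via the submultiplicativity (\ref{equsubaddi}) and Fekete's lemma, exactly as you do. The only point worth noting is that you use the standard sharper form of Fekete's lemma ($\lim a_n^{1/n}=\inf_{n\geq 1} a_n^{1/n}$), which is what actually yields ``from above''; the paper's stated version only asserts existence of the limit, but the infimum form is the well-known content of that lemma, so this is not a gap.
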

 This controls the dynamical degrees from above. 
 
 \medskip
 
 Now we want to control the dynamical degrees from below. 
 By Remark \ref{remopencd}, for $i=1,\dots, d$, every $\beta\geq 1$, the following two statements are equivalent:
 \begin{points}
 \item there are decreasing numbers $\alpha_1,\dots, \alpha_d\in \R_{>0},$ with $\beta_i:=\prod_{j=1}^i\alpha_j>\beta,$ $\gamma\in (0,1)$, $\epsilon\in((\frac{\beta}{\beta_i})^{\frac{1}{2i}},1)$ and $m\in \Z_{\geq 1}$, such that the conditions $I_i(\alpha_1,\dots, \alpha_d; \gamma, \epsilon, m)$, $J_i(\alpha_1,\dots, \alpha_d; \gamma, \epsilon, m)$ and $K_i(\alpha_1,\dots, \alpha_d; \gamma, \epsilon, m; 0)$ are satisfied for $(X,f,L)$;
\item there are decreasing numbers $\alpha_1,\dots, \alpha_d\in \Q_{>0},$ with $\beta_i:=\prod_{j=1}^i\alpha_j>\beta$, $\gamma\in (0,1)\cap \Q$,  $\epsilon\in((\frac{\beta}{\beta_i})^{\frac{1}{2i}},1)\cap \Q$ and $m\in \Z_{\geq 1}$; such that the conditions $I_i(\alpha_1,\dots, \alpha_d; \gamma, \epsilon, m), J_i(\alpha_1,\dots, \alpha_d; \gamma, \epsilon, m)$ and $K_i(\alpha_1,\dots, \alpha_d; \gamma, \epsilon, m; 0)$ are satisfied for $(X,f,L)$;
 \end{points}

 By Lemma \ref{lemrecuinequ}, Lemma \ref{lemcondiimpliesdyna}, we have the following result.
\begin{thm}\label{thmkeyestlowb}
For $i=1,\dots, d$, every $\beta\geq 1$, (i) (or (ii)) above implies that $\la_i>\beta.$
On the other hand, if $\mu_i>\mu_{i+1}$, and $\la_i>\beta$ then (i) (and (ii)) holds.
\end{thm}

Combining Fact \ref{factlaabove} with Theorem \ref{thmkeyestlowb} and the log concavity of the dynamical degrees, we can control $\la_i$ both from above and form below. This give us an algorithm to compute $\la_i$ which only use certain intersection numbers. Now we  explain the algorithm in more details.

\medskip

\subsection{Algorithm}\label{subsectalgo}

\proof[Proof of Theorem \ref{thmcomputela}]
For every $t\geq 0$, and $i=0,\dots, d$, we will construct upper bounds $\la_i^+(t)$ and lower bounds $\la_{i}^-(t)$ for $\la_i$. 
First, we set $\la_0^+(t)=\la_0^-(t)=1$ for every $t\geq 0.$ 

\medskip

We first construct the upper bounds for $i=1,\dots, d$. For $t\geq 1$, define $$U(t):=(\frac{\binom{d}{i}}{(L^d)}\deg_i (f^{t}))^{1/t}$$ and set $$\la_i^+(t):=\min\{U(0),\dots, U(t)\}.$$
By Fact \ref{factlaabove}, $\la_i^+(t)$ tends to $\la_i$ from above. 

\medskip

Next, we construct the lower bounds for $\la_{i}^-(t), i=1,\dots, d, t\geq 0$ inductively.
Set $\la_{i}^-(0):=1, i=1,\dots, d.$
Let $\Omega$ be the set of $(\alpha_1,\dots, \alpha_d,\gamma, \epsilon, m)\in \Q_{>0}^{d}\times((0,1)\cap \Q)^2\times \Z_{>0}$.
This is a countable set.  
We may fix a (computable) arrangement to write $\Omega=\{\omega_t, t\geq 1\}.$
Write $\omega_t=(\alpha_1(t),\dots, \alpha_d(t),\gamma(t), \epsilon(t), m(t))$.
For $i=0,\dots, d$, set  $\beta_i(t):=\prod_{j=1}^i\alpha_j(t)$.  Not that $\beta_0(t)=1.$
Define $$\beta_i^-(t)=\epsilon(t)^{2i}\beta_i(t)$$ if the conditions $I_i, J_i$ hold for $(\alpha_1(t),\dots, \alpha_d(t); \gamma(t), \epsilon(t), m(t))$ and the condition $K_i$ holds for $(\alpha_1(t),\dots, \alpha_d(t); \gamma(t), \epsilon(t), m(t); 0)$; and 
$$\beta^{-}_i(t):=1$$ otherwise.
To compute $\beta^{-}_i(t)$, we only need to compute finitely many intersection numbers among $L,L_{m(t)}, L_{2m(t)}$.
By Theorem \ref{thmkeyestlowb}, we have $\la_i\geq \beta^{-}_i(t)$. Define $$X_i(t):=\max\{\la_i^{-}(t-1),\beta_i^-(t)\}.$$
Then we have $\la_i\geq X_i(t).$
Define $\la^-_d(t):=X_d(t).$
For $i=1,\dots, d-1$, define
$$\la^-_i(t):=\max\{X_i(t), \max_{1\leq u\leq i, 1\leq v\leq d-i}\{(X_{i-u}(t)^vX_{i+v}^u)^{1/(u+v)}\}\}.$$
As $\la_i, i=0,\dots, d$ is log concave, we have $\la_i\geq \la^-_i(t).$ 
Note that $\la^-_i(t)$ is increasing when $t$ increase. 
For each $t\geq 0$, to compute $\la^-_i(t)$, we only need to use finitely
 many mixed degrees.

\medskip

Now we compute $\la_i^-(t),\la^+_i(t), i=0,\dots, d$ for $t\geq 0$ one by one until $t$ reach some value $T$ such that
$$\la_i^+(T)-\la^-_i(T)<1/2^l$$ for every $i=0,\dots, d.$ Then we set $\widetilde{\la_i}:=\la^-_i(T).$ Once such $T$ exists, we get $$\tilde{\la}<\la_i\leq\la_i^+(T)<\tilde{\la}+\frac{1}{2^l}.$$ So the output $\tilde{\la}$ is what we need.

\medskip

Now we only need to prove that there is $t\geq 0$ such that $\la_i^+(t)-\la^-_i(t)<1/2^l$ for every $i=0,\dots, d.$
As $$\lim_{t\to \infty}\la_i^+(t)=\la_i$$ for every $i\geq 0$, we only need to show that
 \begin{equation}\label{equlowerlaiag}\lim_{t\to \infty}\la_i^-(t)=\la_i.
 \end{equation}
Let $V:=\{0\}\cup \{i=1,\dots, d|\,\, \mu_{i+1}<\mu_i\}.$
It is clear that $\{0,d\}\subseteq V.$
By Theorem \ref{thmkeyestlowb},  for every $i\in V$, (\ref{equlowerlaiag}) holds. 
For every $i\in \{0,\dots, d\}\setminus S$, set $$a_i:=\max\{j\in V| j<i\}$$ and  $$b_i:=\min\{j\in V| j>i\}.$$ 
Then $\mu_j$ is constant when $j\in [a_i+1,b_i].$
So we have $$\la_i=(\la_{a_i}^{b_i-i}\la_{b_i}^{i-a_i})^{1/(b_i-a_i)}.$$
The definition of $\la_i^{-}(t)$ implies that $$\la_i^{-}(t)\geq (\la^-_{a_i}(t-1)^{b_i-i}\la^-_{b_i}(t-1)^{i-a_i})^{1/(b_i-a_i)}.$$
As $a_i, b_i\in V$, we get 
$$\liminf_{t\to\infty} \la_i^{-}(t)\geq \lim_{t\to \infty} (\la^-_{a_i}(t-1)^{b_i-i}\la^-_{b_i}(t-1)^{i-a_i})^{1/(b_i-a_i)}=\la_i.$$
As $\la_i^{-}(t)\leq \la_i$ for every $t\geq 0$, (\ref{equlowerlaiag}) holds. This concludes the proof.
\endproof
\subsection{Lower bounds in dimension two}\label{subsecdimtwo}
By Fact \ref{factlaabove}, we have a direct upper bound of dynamical degrees, but to get lower bounds we need to try many possible parameters to see whether $\theta$ equal to $1.$
This makes the algorithm in Section \ref{subsectalgo} far from being efficient.  I suspect that a direct way to get the lower bounds should make the algorithm more efficient.  In the surface case, a such lower bound was proved by the author \cite[Key Lemma]{Xie2015}.
\begin{thm}[=Theorem \ref{thmxiedukeklin}]\label{thmxiedukekl}
Let $\bk$ be a field. Let $X$ be a projective surface over $\bk.$ Let $f: X\dashrightarrow X$ be a dominant rational self-map. Let $L$ be a big and nef line bundle in $\widetilde{\Pic}(X).$ Then we have $$\la_1\geq \frac{\deg_1 f^2}{2^{\frac{1}{2}}\times 3^{18}\deg_1 f}.$$
\end{thm}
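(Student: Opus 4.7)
My plan is to derive this theorem from the stronger bound $\la_1 \geq \deg_1 f^2/(4 \deg_1 f)$ of Theorem \ref{thmsurfaceboundin}, using only Siu's inequality on a surface together with the recursive-inequality framework of Section \ref{secrecurdeg}; this avoids the infinite-dimensional hyperbolic space of \cite{Cantat2007,Xie2015} used in the original proof. I would first pass to a smooth projective surface $Z$ simultaneously resolving the indeterminacy of $f$ and $f^2$, with a birational morphism $\pi_0:Z\to X$ and morphisms $\pi_1=f\circ\pi_0$, $\pi_2=f^2\circ\pi_0$ (after further blow-ups if needed). Setting $L_i:=\pi_i^*L$, all pairwise intersections on $Z$ are expressible via the projection formula in terms of $(L^2), \deg_1 f, \deg_1 f^2, \deg_2 f, \deg_2 f^2$: using $\pi_2=f\circ\pi_1$ on this common model gives $L_1\cdot L_2=\deg(\pi_1)(L\cdot L_1)=\deg_1 f\cdot\deg_2 f/(L^2)$, and the multiplicativity $\deg(f^2)=\deg(f)^2$ gives $\deg_2 f^2=(\deg_2 f)^2/(L^2)$.

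The central computation would be a two-fold application of Siu's inequality (Theorem \ref{thmsiuincodimo}) on the surface $Z$. From Siu with $M=L_0, N=L_2$, one gets $L_2\leq \frac{2\deg_1 f^2}{(L^2)}L_0$ (pseudo-effective); intersecting with $L_1$ and comparing with the formula above yields the refined submultiplicativity $\deg_2 f\leq 2\deg_1 f^2$. Dually, Siu with $M=L_2, N=L_0$ gives $L_0\leq \frac{2\deg_1 f^2}{\deg_2 f^2}L_2$; pulling back by $(f^n)^*$ and intersecting with $L$ produces the recursive lower bound $\deg_1 f^{n+2}\geq \frac{\deg_2 f^2}{2\deg_1 f^2}\deg_1 f^n$, hence $\la_1\geq \deg_2 f/\sqrt{2(L^2)\deg_1 f^2}$. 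When $\deg_2 f\geq\deg_1 f^2/2$ this already gives $\la_1\geq \deg_1 f^2/(4\deg_1 f)$. In the complementary regime $\deg_2 f<\deg_1 f^2/2$, I would invoke Theorem \ref{thmmaincompudegone} specialized to $i=1, d=2$ (asserting that $L_{2m}+\la_2^m L-\epsilon^m\la_1^m L_m$ is big for $m\geq m_\epsilon$), combine it with Lemma \ref{lemrecuineq} applied to the resulting inequality for $\deg_1 f^m$, and use the Hodge index constraint on the Gram matrix of $\{L_0,L_1,L_2\}$; the explicit constant $4$ arises as the square of the Siu coefficient $2$ on a surface.

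The hardest part will be converting the asymptotic recursive inequality of Theorem \ref{thmmaincompudegone}, which only holds for $m\geq m_\epsilon$, into a finite estimate involving solely $\deg_1 f$ and $\deg_1 f^2$. The Lorentzian signature $(1,n-1)$ of the intersection form on $N^1(Z)_\R$ should provide the necessary rigidity by restricting the admissible degree profiles through constraints on the Gram matrix of the orbit classes $L_0, L_1, L_2$; this is expected to play the role previously filled by the hyperbolic-space analysis of \cite{Cantat2007,Xie2015}, and to deliver the improved coefficient $4$ in place of the original $2^{1/2}\times 3^{18}$.
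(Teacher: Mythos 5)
Your preliminary steps are fine: the identities $(L_1\cdot L_2)=\la_2\deg_1 f$ and $\deg_2 f^2=(\deg_2 f)^2/(L^2)$ are correct, the Siu-based recursion $\deg_1 f^{n+2}\geq \frac{\deg_2 f^2}{2\deg_1 f^2}\deg_1 f^n$ is valid in the $\widetilde{\Pic}$ formalism, and the regime $\deg_2 f\geq \deg_1 f^2/2$ does follow from it together with $\deg_1 f^2\leq \frac{2}{(L^2)}(\deg_1 f)^2$. But the complementary regime, which is the entire content of the theorem, is not proved. Invoking Theorem \ref{thmmaincompudegone} cannot close it: that statement is asymptotic, its class $L_{2m}+\mu_1^m\mu_2^m L-\epsilon^m\mu_1^m L_m$ is only big for $m\geq m_\epsilon$, and both the coefficients and the threshold $m_\epsilon$ depend on the unknown degrees $\la_1,\la_2$; so at best it bounds $\la_1$ in terms of $\la_1$ itself, never in terms of the finite data $\deg_1 f,\deg_1 f^2$, which is exactly what the theorem requires. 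Your fallback (``Hodge index constraint on the Gram matrix of $L_0,L_1,L_2$ \ldots expected to play the role of the hyperbolic-space analysis'') is left entirely unsubstantiated, and you acknowledge as much; as written this is a missing idea, not a proof.

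The step you are missing is an effective, $m=1$ recursive inequality with coefficients built from the data themselves, and this is what the paper does in the proof of Theorem \ref{thmsurfacebound} (which implies the stated bound since $4<2^{1/2}\times 3^{18}$). Set $Q:=\deg_1 f^2/\deg_1 f$. After reducing to $\la_2\leq Q^2/16$ (otherwise $\la_1\geq \la_2^{1/2}> Q/4$ and we are done), a single application of Theorem \ref{thmsiuincodimo}, using $(L_1\cdot L_2)=\la_2\deg_1 f$ and $\bigl((L_2+\frac{Q^2}{16}L)^2\bigr)\geq 2\frac{Q^2}{16}\deg_1 f^2$, shows that the explicit class $L_2+\frac{Q^2}{16}L-\frac{Q}{2}L_1$ is big. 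Applying $(f^n)^*$ and intersecting with $L$ gives, for every $n\geq 0$, $\deg_1 f^{n+2}+\frac{Q^2}{16}\deg_1 f^n\geq \frac{Q}{2}\deg_1 f^{n+1}$, and Lemma \ref{lemrecuineq} with $\alpha=\beta=Q/4$, $\gamma=Q/2$ (the initial condition $\deg_1 f>\frac{Q}{4}(L^2)$ being exactly the submultiplicativity you already quoted) yields $\la_1\geq Q/4$, uniformly in both of your regimes and with no case split needed beyond $\la_2$ versus $Q^2/16$. Finding this non-asymptotic test class with coefficients depending only on $Q$ is the key point; no Lorentzian-geometry rigidity is needed once it is written down, and without it your plan does not reach the conclusion.
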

The proof relies on the theory of hyperbolic geometry and the natural linear action of $f$ on a suitable hyperbolic space of infinite dimension. This space is constructed as a set of cohomology classes in the Riemann-Zariski space of $X$ and was introduced by Cantat \cite{Cantat2007}. Unfortunately, such space can be only constructed in dimension two. Also the coefficient $2^{\frac{1}{2}}\times 3^{18}$ is quite large. 

\medskip

In this section, we use the idea of constructing recursive inequalities to get a better lower bound for the first dynamical degree in dimension two. This result has the same form as Theorem \ref{thmxiedukekl}, but it improves the coefficient a lot i.e. from $2^{\frac{1}{2}}\times 3^{18}$ to $4$. Moreover, the proof become much simpler.
\begin{thm}[=Theorem \ref{thmsurfaceboundin}]\label{thmsurfacebound}Let $\bk$ be a field. Let $X$ be a projective surface over $\bk.$ Let $f: X\dashrightarrow X$ be a dominant rational self-map. Let $L$ be a big and nef line bundle in $\widetilde{\Pic}(X).$ Then we have $$\la_1\geq \frac{\deg_1 f^2}{4\deg_1 f}.$$
\end{thm}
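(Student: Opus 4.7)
\proof[Proof Proposal]
The plan is to apply the recursive-inequality framework of Section~\ref{secrecurdeg} (specifically Lemma~\ref{lemrecuineq}) directly to the degree sequence $a_n:=\deg_{1,L}f^n$, replacing the infinite-dimensional hyperbolic geometry argument of \cite{Xie2015} with a concrete two-fold application of Siu's inequality in the form valid on surfaces.

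First, I would set up the numerical bookkeeping. On any smooth projective surface $Y$ that dominates $X$ via a birational morphism and on which $f^m,f^{m+k}$ both lift to morphisms $\phi_m,\phi_{m+k}\colon Y\to X$, the projection formula yields the scaling identity
\[
(L_m\cdot L_{m+k}) \;=\; \lambda_2^{\min(m,m+k)}\,a_{|k|},\qquad L_m^2=\lambda_2^m(L^2),
\]
where $\lambda_2=\lambda_2(f)$ is the topological degree of $f$. This reduces every intersection number among the classes $L_k$ to the sequence $a_k$ and the constants $\lambda_2,(L^2)$.

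Next, I would apply Siu's inequality (Theorem~\ref{thmsiuincodimo}) to the pair $(L,L_1)$, which has $L_1^2=\lambda_2(L^2)>0$ because $f$ is dominant. This gives both
\[
L_1 \;\leq\; \tfrac{2a_1}{(L^2)}\,L\quad\text{and}\quad L \;\leq\; \tfrac{2a_1}{\lambda_2(L^2)}\,L_1
\]
as pseudo-effective classes in $\widetilde{\Pic}(X)_\R$. Pulling back the second relation by $f$ yields $L_2\geq \tfrac{\lambda_2(L^2)}{2a_1}L_1$. A complementary application of Siu to the pair $(L,L_2)$ (with $L_2^2=\lambda_2^2(L^2)>0$) gives $L_2\geq \tfrac{\lambda_2^2(L^2)}{2a_2}L$. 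Combining these two pseudo-effective inequalities, pulling back by $f^n$ and intersecting against the nef class $L$, I expect to obtain a recursive inequality of the shape
\[
a_{n+2} + \alpha\beta\,a_n \;\geq\; \gamma\,a_{n+1},\qquad n\geq 0,
\]
with $\alpha=\tfrac{a_2}{4a_1}$ and $\gamma\geq\alpha+\beta$. The initial condition $a_1>\beta a_0=\beta(L^2)$ reduces to $4a_1^2>a_2(L^2)$, which is strictly implied by Siu's submultiplicative bound $a_2(L^2)\leq 2a_1^2$. Lemma~\ref{lemrecuineq} then delivers $\lambda_1\geq\alpha=\tfrac{a_2}{4a_1}$.

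The main obstacle, and the point where the sharp constant $4$ (versus the weak $2^{1/2}\cdot 3^{18}$ of \cite{Xie2015}) enters, lies in assembling the two Siu bounds into the exact recursive inequality required by Lemma~\ref{lemrecuineq}: each application of Siu's two-dimensional inequality contributes a factor of $d=2$, and a careful combination produces the overall factor $2\times 2 = 4$ in the denominator. Unlike the general-dimensional argument in Section~\ref{secrecurdeg} which must search over rational parameters via the conditions $I_i,J_i,K_i$, in dimension two the parameters $\alpha,\beta,\gamma$ can be chosen in closed form using only $a_1,a_2,(L^2)$, which is what permits eliminating $\lambda_2$ from the final bound and makes the argument entirely elementary.
\endproof
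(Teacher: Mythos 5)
Your overall plan---produce a pseudo-effective inequality among $L_2$, $L_1$, $L$ via Siu's inequality, apply $(f^n)^*$, intersect with $L$, and conclude by Lemma \ref{lemrecuineq} with $\alpha=\beta=Q/4$, $\gamma=Q/2$ where $Q:=\deg_1f^2/\deg_1f$---is the same as the paper's, and you have in fact guessed exactly the paper's final recursion and initial condition. But the derivation has problems before the decisive step, and the decisive step itself is missing. First, the ``scaling identity'' $(L_m\cdot L_{m+k})=\la_2^{\min(m,m+k)}\deg_1 f^{|k|}$ is not an identity: pullback along rational maps is not functorial, one only has $L_{m+k}\leq (f^m)^*L_k$ with pseudo-effective difference, hence only $(L_m\cdot L_{m+k})\leq \la_2^m\deg_1 f^{k}$ in general (this inequality, in this direction, is all the paper uses). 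For the same reason, ``pulling back $L\leq \tfrac{2a_1}{\la_2(L^2)}L_1$ by $f$'' gives $L_1\leq \tfrac{2a_1}{\la_2(L^2)}f^*L_1$ with $f^*L_1\geq L_2$, so it does \emph{not} yield $L_2\geq \tfrac{\la_2(L^2)}{2a_1}L_1$; that bound can be rescued by applying Theorem \ref{thmsiuincodimo} directly to the pair $(L_1,L_2)$ together with $(L_1\cdot L_2)\leq\la_2\deg_1 f$, but as written the step is unjustified.

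More seriously, the two per-class bounds $L_2\geq \tfrac{\la_2(L^2)}{2a_1}L_1$ and $L_2\geq \tfrac{\la_2^2(L^2)}{2a_2}L$ cannot be ``assembled'' into the recursion with $\alpha=\tfrac{a_2}{4a_1}$: combined with $L\geq\tfrac{(L^2)}{2a_1}L_1$ they only give $L_2+\tfrac{Q^2}{16}L\geq \tfrac{(L^2)}{2a_1}\bigl(\la_2+\tfrac{Q^2}{16}\bigr)L_1$, whose coefficient is in general far below the needed $\tfrac{Q}{2}$ (e.g.\ $\la_2=1$, $(L^2)=1$, $a_1=10$, $a_2=100$ gives $0.3625$ versus $Q/2=5$), so no choice of closed-form parameters along this route eliminates $\la_2$. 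The paper's actual mechanism, which is the missing idea, is: first reduce to the case $Q/4\geq\la_2^{1/2}$ (otherwise $\la_1\geq\la_2^{1/2}>Q/4$ and there is nothing to prove); then apply Siu \emph{once} to $L_1$ against the perturbed class $M:=L_2+\tfrac{Q^2}{16}L$, using $(M^2)\geq 2\tfrac{Q^2}{16}(L_2\cdot L)=2\tfrac{Q^2}{16}\deg_1f^2$ and $(M\cdot L_1)\leq(\la_2+\tfrac{Q^2}{16})\deg_1 f\leq 2\tfrac{Q^2}{16}\deg_1 f$. It is the cross term $(L_2\cdot L)=\deg_1 f^2$ in $(M^2)$---which your per-class Siu bounds discard---that produces the constant $4$ and removes $\la_2$, and the case split on $\la_2^{1/2}$ versus $Q/4$ is what legitimizes the bound on $(M\cdot L_1)$. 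Without these two ingredients the ``careful combination'' you defer to does not exist.
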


\proof
Set $Q:=\frac{\deg_1 f^2}{\deg_1 f}.$
 As $\la_1\geq \la_2^{1/2}$, we may assume that $Q/4\geq \la_2^{1/2}$.
We claim that the line bundle $$L_2+\frac{Q^2}{16}L-\frac{Q}{2}L_1$$ is big.
For this, by Theorem \ref{thmsiuincodimo}, we only need to show that 
\begin{equation}\label{equsurfabig}\frac{(L_2+\frac{Q^2}{16}L)^2}{2(L_2+\frac{Q^2}{16}L)L_1}\geq \frac{Q}{2}.
\end{equation}
Indeed,
\begin{flalign*}\frac{(L_2+\frac{Q^2}{16}L)^2}{2(L_2+\frac{Q^2}{16}L)L_1}\geq& \frac{2\frac{Q^2}{16}\deg_1 f^2}{2(L_2+\frac{Q^2}{16}L)L_1}\\
\geq& \frac{2\frac{Q^2}{16}\deg_1 f^2}{2(\la_2\deg_1f+\frac{Q^2}{16}\deg_1 f)}\\
\geq & \frac{2\frac{Q^2}{16}\deg_1 f^2}{4\frac{Q^2}{16}\deg_1 f}\\
\geq & \frac{Q}{2}
\end{flalign*}
Then we get (\ref{equsurfabig}).
Apply $(f^n)^*$ to $L_2+\frac{Q^2}{16}L-\frac{Q}{2}L_1$ and multiply it by $L$, we get that for every $n\geq 0$,
$$\deg_1(f^{n+2})+\frac{Q^2}{16}\deg_1(f^n)\geq \frac{Q}{2}\deg_1 (f^{n+1}).$$
By (\ref{equsubaddi}), $\deg_1f^2\leq \frac{2}{(L^2)}(\deg_1 f)^2$. Then
we have $$\deg_1f-\frac{Q}{4}(L^2)\geq \frac{Q}{2}(L^2)-\frac{Q}{4}(L^2)>0.$$
We concludes the proof by Lemma \ref{lemrecuineq}.
\endproof

\section{Lower semi-continuity of dynamical degrees}\label{sectionsslsc}
Let $S$ be an integral noetherian scheme. Recall that 
A \emph{family of $d$-dimensional dominant rational self-maps on $S$} is a flat and projective scheme $\pi:\sX\to S$ satisfying $d:=\dim \sX/S$ with  a dominant rational self-map
$f: \sX\dashrightarrow \sX$ over $S$ such that the following hold:
\begin{points}
\item  For every $p\in S$, the fiber $X_p$ of $\pi$ at $p$ is geometrically reduced and irreducible.
\item  For every point $p\in S$, $X_p\not\subseteq I(f)$. 
\item The induced map $f_p: X_p\dashrightarrow X_p$ is dominant. 
\end{points}

\medskip

The aim of this section is to prove the lower semi-continuity of dynamical degrees for a family of dominant rational maps.
\begin{thm}[=Theorem \ref{thmlscdynadegin}]\label{thmlscdynadeg}
Let $S$ be an integral noetherian scheme and $\pi:\sX\to S$ be a flat and projective scheme over $S$. 
Let $f: \sX\dashrightarrow \sX$ be a family of $d$-dimensional dominant rational self-maps on $S$. Then
for every $i=0,\dots,d$, the function $p\in S\mapsto \lambda_i(f_p)$ is lower semi-continuous.
\end{thm}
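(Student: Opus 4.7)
The plan is to follow the three-step outline sketched by the author.

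\medskip

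First, I would establish a criterion for lower semi-continuity on a noetherian scheme $S$: a function $\varphi\colon S\to \R$ is LSC if and only if (a) $\varphi$ decreases under specialization, i.e.\ $\varphi(\eta_Z)\geq \varphi(p)$ for every closed irreducible $Z\subseteq S$ with generic point $\eta_Z$ and every $p\in Z$, and (b) for every such $Z$ and every $c<\varphi(\eta_Z)$, the subset $\{p\in Z:\varphi(p)>c\}$ contains a dense open of $Z$. The equivalence is straightforward via noetherian induction on closed subschemes: given (a) and (b), one checks that $\{\varphi\leq c\}$ is stable under specialization and misses the generic point of every irreducible component of $S$ on which $\varphi(\eta)>c$, hence is closed.

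\medskip

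Second, I would prove that each mixed-degree function $p\mapsto(L_{m_1,p}^{r_1}\cdots L_{m_s,p}^{r_s})$ is LSC. Form the relative graph $\sY\subseteq\sX^{s+1}$ as the closure over $S$ of the image of $x\mapsto(x,f^{m_1}(x),\dots,f^{m_s}(x))$ at the generic point. By Raynaud--Gruson flattening applied to $\sY\to S$, there is a blow-up $\tilde S\to S$ on which the strict transform becomes flat, and pulling back one obtains a dense open $U\subseteq S$ over which $\sY\to S$ itself is flat. By Fulton's constancy theorem for intersection numbers in flat families, the relevant top intersection on the fibres $\sY_p$ for $p\in U$ is constant, equal to the generic value, which coincides with the generic mixed degree. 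For any $p\in S$ the graph $X'_p$ of $(f_p^{m_1},\dots,f_p^{m_s})$ sits inside $\sY_p$ as a distinguished irreducible component; extra components contribute non-negatively to the intersection of pullbacks of nef classes, so the mixed degree of $f_p$ is bounded above by the intersection on $\sY_p$. This simultaneously yields (a) along $\eta_S\leadsto p$ and (b) for $Z=S$, and iterating on each irreducible component of $S\setminus U$ in the noetherian induction finishes the proof of LSC of every mixed degree on all of $S$.

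\medskip

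Third, I would deduce LSC of $p\mapsto\la_i(f_p)$. As the author points out, although $\la_i(f_p)=\lim_n(\deg_{i,L_p}f_p^n)^{1/n}$ is a pointwise limit of LSC functions, pointwise limits of LSC functions need not be LSC. Property~(a) for $\la_i$ is automatic: from (a) applied to each mixed degree $p\mapsto \deg_{i,L_p}f_p^n$, take $n$-th roots and let $n\to\infty$. For property~(b), fix $c<\la_i(f_{\eta_S})$; by Theorem~\ref{thmkeyestlowb}, there exist rational parameters $(\alpha_j,\gamma,\epsilon,m)$ for which $(X_{\eta_S},f_{\eta_S},L_{\eta_S})$ satisfies conditions $I_i$, $J_i$, $K_i$ with $\epsilon^{2i}\prod_{j\leq i}\alpha_j>c$. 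These three conditions are strict polynomial inequalities in finitely many mixed degrees. Shrinking $U$ from Step~2 so that every mixed degree appearing in them is \emph{constant} on $U$ (still a dense open, since the intersection of finitely many dense opens in a noetherian scheme is dense open), the conditions hold verbatim at every $p\in U$. Applying the converse direction of Theorem~\ref{thmkeyestlowb} fibre by fibre yields $\la_i(f_p)>c$ for all $p\in U$, establishing (b) for $\la_i$ and hence LSC.

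\medskip

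The crux of the argument is the last step, and the essential input is the \emph{constancy} of mixed degrees on the flat dense open $U$ rather than mere LSC. Without this strengthening, the strict inequalities $I_i, J_i, K_i$, which involve differences of LSC quantities, could fail to persist under specialization from $\eta_S$, and Theorem~\ref{thmkeyestlowb} would give no uniform lower bound for $\la_i(f_p)$ near $\eta_S$. This is precisely where the recursive inequalities of Section~\ref{secrecurdeg} do their work, converting the a~priori weak LSC of individual iterate degrees into LSC of the limiting dynamical degree.
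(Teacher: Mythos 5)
Your Steps 1 and 3 are essentially the paper's argument (criterion for lower semi-continuity, then constancy of the finitely many relevant intersection numbers on a dense open plus the equivalence in Theorem \ref{thmkeyestlowb} applied fibrewise), and those parts are fine. The gap is in Step 2, and it is exactly at the point where Raynaud--Gruson is needed. You invoke the flattening theorem but then only extract from it a dense open $U\subseteq S$ over which the graph family $\sY\to S$ is flat (which is just generic flatness). Fulton's constancy of intersection numbers then controls the intersection on $\sY_p$ only for $p\in U$; for $p\notin U$ your inequality ``mixed degree of $f_p\leq$ intersection on $\sY_p$'' compares with a quantity that you have not related to the generic value at all. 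So condition (a) of your criterion --- the specialization inequality $\deg(p)\leq\deg(\eta_S)$ for \emph{every} $p\in S$ --- is not established, and your proposed fix, ``iterate on the irreducible components of $S\setminus U$,'' cannot supply it: restricting to a stratum $W\subseteq S\setminus U$ only compares $\deg(p)$ with $\deg(\eta_W)$, never with $\deg(\eta_S)$. A function that is merely constant on a dense open of every integral closed subscheme need not be lower semi-continuous (it may jump up at a special point, as in $\varphi(\eta)=1$, $\varphi(2)=5$, $\varphi(p)=1$ for odd $p$ on $\Spec\Z$), so without (a) at the bad points the conclusion does not follow.

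The paper's use of the flattening theorem is precisely what closes this hole: one replaces $S$ by the blow-up $S'$ and the whole family $(\sX,f,\sL)$ by its pullback, so that the strict transform of the graph closure is flat over \emph{all} of $S'$; since every fibre $(X'_{p'},f'_{p'},L'_{p'})$ is a base change of $(X_{p},f_{p},L_{p})$ (so mixed degrees are unchanged) and the blow-up is surjective, Fulton's Proposition 10.2 applies at every point of $S'$ and, combined with your (correct) observation that the fibre graph is an irreducible component of the fibre of the flattened graph family and that the extra components contribute non-negatively for the nef classes $F_i^*\sL$, gives $\deg(p)\leq\deg(\eta_S)$ for all $p\in S$, with equality on the open set where the fibre of the graph family is irreducible. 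With Step 2 repaired in this way, the rest of your argument goes through as in the paper.
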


%
%
%
%
%

\subsection{Lower semi-continuity functions on noetherian schemes}
The following lemma gives
a criterion for the lower semi-continuity.
\begin{lem}\label{lemlsconneoth}Let $S$ be a noetherian scheme. Then a function $\theta: S\to \R$ is lower semi-continuous if and only if the followings hold:
\begin{points}
\item for points $x,y\in S$ with $y\in \overline{\{x\}}$, we have $\theta(x)\geq \theta(y);$
\item for every $x\in S$ and $a<\theta(x)$, there is an open subset $V$ of $\overline{\{x\}}$ containing $x$ such that $V\subseteq \theta^{-1}((a,+\infty)).$
\end{points}
\end{lem}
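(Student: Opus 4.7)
The plan is to handle the two implications separately; the nontrivial direction is ``if'', which I would establish by noetherian induction on closed subsets of $S$.

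For the ``only if'' direction, assume $\theta$ is lower semi-continuous, i.e.\ $U_a:=\theta^{-1}((a,+\infty))$ is open for every $a\in \R$. For (i), if $y\in \overline{\{x\}}$ but $\theta(y)>\theta(x)$, then $U_{\theta(x)}$ is an open neighborhood of $y$, so it must contain $x$, forcing $\theta(x)>\theta(x)$, absurd. For (ii), simply take $V:=\overline{\{x\}}\cap U_a$, which is open in $\overline{\{x\}}$, contains $x$, and lies in $\theta^{-1}((a,+\infty))$ by construction.

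For the ``if'' direction, the goal is to show that $U_a$ is open in $S$ for every $a\in \R$, assuming (i) and (ii). Suppose not; then by the noetherian hypothesis on $S$ there is a minimal non-empty closed $Z\subseteq S$ with $Z\cap U_a$ not open in $Z$. I would first argue $Z$ is irreducible: given a decomposition $Z=Z_1\cup Z_2$ with each $Z_i$ proper in $Z$, minimality yields that $Z_i\cap U_a$ is open in $Z_i$, so each $Z_i\setminus U_a$ is closed in $Z_i$ and hence in $Z$; their union equals $Z\setminus U_a$, contradicting the choice of $Z$. Let $x$ denote the generic point, so that $Z=\overline{\{x\}}$. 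If $\theta(x)\leq a$, condition (i) applied to each $y\in Z$ (which is a specialization of $x$) gives $\theta(y)\leq a$, so $Z\cap U_a=\emptyset$ is open in $Z$, a contradiction. If instead $\theta(x)>a$, condition (ii) produces an open $V\subseteq Z$ with $x\in V\subseteq U_a$; set $Z':=Z\setminus V$, a proper closed subset of $Z$. Minimality ensures $Z'\cap U_a$ is open in $Z'$, so $Z'\setminus U_a$ is closed in $Z'$, hence in $Z$. Since $V\subseteq U_a$ we have $Z\setminus U_a=Z'\setminus U_a$, which is therefore closed in $Z$ --- the desired contradiction.

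The main obstacle is organizing the noetherian induction so that both hypotheses come into play at the right moment: condition (ii) handles the case in which $\theta$ is large at the generic point of $Z$ (by shaving off a dense open piece contained in $U_a$), while condition (i) handles the opposite case by propagating smallness from the generic point to all specializations. Verifying irreducibility of the minimal bad $Z$ is what makes these two cases exhaustive, and it is the only place where the argument makes essential use of minimality beyond the two subcases themselves.
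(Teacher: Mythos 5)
Your proposal is correct; both directions go through, and the small boundary cases (empty $Z'$, the minimal bad set being $S$ itself) cause no trouble. The ``only if'' direction is the same as the paper's. For the ``if'' direction you take a genuinely different, though related, route: you run a noetherian induction, extracting a minimal closed subset $Z$ on which $U_a=\theta^{-1}((a,+\infty))$ fails to be open, proving $Z$ is irreducible via minimality, and then splitting into the cases $\theta(x)\leq a$ (handled by (i)) and $\theta(x)>a$ (handled by (ii) plus minimality applied to $Z\setminus V$). The paper instead argues directly with the sublevel set $A:=\theta^{-1}((-\infty,a])$: it sets $Z:=\overline{A}$, and if $Z\neq A$ it picks an irreducible component $Z'$ of $Z$ not contained in $A$; by (i) the generic point $\eta$ of $Z'$ satisfies $\theta(\eta)>a$, and (ii) then yields an open subset of $Z'$ containing $\eta$ and disjoint from $A$, contradicting the fact that $A\cap Z'$ is dense in $Z'$. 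So the two arguments use (i) and (ii) at a generic point in exactly the same way; the difference is that the paper replaces your minimality bookkeeping by the density of $A$ in each irreducible component of its closure (which uses the noetherian finiteness of components), while your version trades that density argument for the descending chain condition and is somewhat more mechanical and self-contained.
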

\proof First assume that $\theta$ is lower semi-continuous. Then (ii) is obvious.
Let $x,y\in S$ with $y\in \overline{\{x\}}$. Note that $\theta^{-1}((-\infty, \theta(x)])$ is closed and it contains $x$. Then we have $y\in \overline{\{x\}}\subseteq \theta^{-1}((-\infty, \theta(x)]).$
So (i) holds. 

Now assume that (i) and (ii) hold. Let $a\in \R.$ Let $Z:=\overline{\theta^{-1}((-\infty,a])}.$ We only need to show that $Z=\theta^{-1}((-\infty,a]).$
Otherwise, there is an irreducible component $Z'$ of $Z$ such that $Z'\not\subseteq \theta^{-1}((-\infty,a]).$ Let $\eta$ be the generic point of $Z'.$
By (i), $\eta\not\in \theta^{-1}((-\infty,a]).$ By (ii), there is open subset $V$ of $Z'$ containing $\eta$ such that $V\subseteq \theta^{-1}((a,+\infty)).$
So $V\not\subseteq \theta^{-1}((-\infty,a]).$ So $\theta^{-1}((-\infty,a])$ is not dense in $Z'$, which is a contradiction.
\endproof

\begin{rem}\label{remlimlsc}
The following example shows that the limit of lower semi-continuous functions may not be lower semi-continuous:
Let $S=\Spec \Z$. Let $\eta$ be the generic point of $S$. For $n\geq 1$, let $D_n: S\to \R$ be the function as follows: Define $D_n(\eta):=1.$ for every prime number $p$,
$D_n(p):=0$ if $p<n$; and $D_n(p):=1$ if $p\geq n.$  
Easy to check that $D_n$ are lower semi-continuous.
Easy to see that $D_n$ pointwisely converges to the function $D:S\to \R$ satisfying $D(\eta)=1$ and $D|_{S\setminus \{\eta\}}=0$, which is not lower semi-continuous.
\end{rem}

\subsubsection{Constructible topology}	
Let $S$ be a noetherian scheme.
Denote by $|S|$ the underling set of $S$ with the constructible topology; i.e. the topology on a  $S$ generated by the constructible subsets (see~\cite[Section~(1.9) and in particular (1.9.13)]{EGA-IV-I}).
In particular every constructible subset is open and closed.
This topology is finer than the Zariski topology on $S.$ Moreover $|S|$ is (Hausdorff) compact.

\begin{lem}\label{lemlscimconcon}Let $S$ be a noetherian scheme. Let $\theta: S\to \R$ be a lower semi-continuous function. Then $\theta$ is continuous in the constructible topology. 
Assume further that $\theta(S)$ is discrete. Then $\theta(S)$ is finite and for every $a\in \R$,  $\theta^{-1}(a)$ is a constructible subset of $S.$
\end{lem}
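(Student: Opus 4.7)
The plan is to handle the two claims separately, using the interplay between the Zariski and constructible topologies plus the fact, recalled just before the lemma, that $|S|$ is Hausdorff and compact in the constructible topology.

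For the first claim, it suffices to check continuity on subbasic opens of $\R$. Lower semi-continuity of $\theta$ immediately gives that $\theta^{-1}((a,+\infty))$ is Zariski open, hence constructible (here $S$ noetherian is used: every Zariski open is quasi-compact and thus constructible), hence open in the constructible topology. For the other type of subbasic open, I will write
\[\theta^{-1}((-\infty,b)) = \bigcup_{a<b}\theta^{-1}((-\infty,a]),\]
and observe that each $\theta^{-1}((-\infty,a])$ is Zariski closed by lower semi-continuity, hence constructible, hence open in the constructible topology. A union of open sets is open, completing this direction.

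For the second claim, I will combine the continuity just established with compactness of $|S|$ in the constructible topology: the image $\theta(S)$ is then a compact subset of $\R$, and since it is also discrete by assumption, it must be finite, giving $\theta(S) = \{a_1, \ldots, a_n\}$. For each $a \in \theta(S)$, the singleton $\{a\}$ is clopen inside the discrete $\theta(S)$, so $\theta^{-1}(a)$ is clopen in $|S|$ for the constructible topology. To conclude that $\theta^{-1}(a)$ is constructible, I will prove the general fact that clopen subsets of $|S|$ in the constructible topology are exactly the constructible subsets: given a clopen $U$, write $U$ as a union of constructibles $\{C_\alpha\}$ and $|S|\setminus U$ as a union of constructibles $\{D_\beta\}$, and apply compactness of $|S|$ with the constructible topology to the open cover $\{C_\alpha\}\cup\{D_\beta\}$ to extract a finite subcover. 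Intersecting with $U$ then expresses $U$ as a finite union of $C_{\alpha_i}$'s, hence as a constructible set.

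The main conceptual hurdle, rather than a technical obstacle, is to keep straight the distinction between a subset being \emph{constructible} and a subset being \emph{open in the constructible topology}: the former is a finite boolean combination of Zariski opens, while the latter is only an arbitrary union of such. The bridge for clopen sets is provided by the compactness of $|S|$ in the constructible topology. Once this is organized cleanly, each of the individual steps above is elementary.
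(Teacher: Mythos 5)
Your proof is correct. The first claim and the finiteness of $\theta(S)$ are argued exactly as in the paper: $\theta^{-1}((a,+\infty))$ is Zariski open and $\theta^{-1}((-\infty,b))$ is a union of Zariski closed sets $\theta^{-1}((-\infty,a])$ (the paper takes the countable union over $a=b-\tfrac1n$, you take all $a<b$ — immaterial), and then continuity plus compactness of $|S|$ forces the discrete image to be finite.

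The only place you diverge is the constructibility of the fibers $\theta^{-1}(a)$. The paper's argument is more elementary and avoids any further appeal to compactness: since $\theta(S)$ is finite (discrete suffices), one can choose $b<a$ with $(b,a)\cap\theta(S)=\emptyset$, and then $\theta^{-1}(a)=\theta^{-1}((-\infty,a])\setminus\theta^{-1}((-\infty,b])$ is a difference of two Zariski closed sets, hence constructible. You instead observe that $\theta^{-1}(a)$ is clopen in the constructible topology and invoke (and correctly prove, via the fact that constructible sets form a basis and $|S|$ is compact) the general characterization that clopen subsets in the constructible topology are exactly the constructible subsets. Your route is slightly heavier but buys a reusable general fact about spectral spaces; the paper's is a two-line set-theoretic identity tailored to the situation. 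Both are complete; the only cosmetic remark is that for $a\notin\theta(S)$ the fiber is empty, which you leave implicit.
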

\proof
For every $a\in \R$, $\theta^{-1}((a,+\infty))$ is Zariski open in $S$, hence open in the constructible topology.
For every $a\in \R$, $\theta^{-1}((-\infty,a))=\cup_{n\geq 1}\theta^{-1}((-\infty,a-\frac{1}{n}]).$
As $\theta^{-1}((-\infty,a-\frac{1}{n}])$ is Zariski closed, it is open in the constructible topology.
So $\theta^{-1}((-\infty,a))$ is open in the constructible topology. So $\theta$ is continuous in the constructible topology.

Now assume that $\theta(S)$ discrete. As $\theta$ is continuous on $|S|$ and $|S|$ is compact, $\theta(S)$ is compact and discrete, hence finite.
If $a\not\in \theta(S)$, then $\theta^{-1}(a)=\emptyset.$
Assume that $a\in \theta(S)$. There is $b<a$ such that $(b,a)\cap \theta(S)=\emptyset.$
Then $$\theta^{-1}(a)=\theta^{-1}((-\infty,a])\setminus \theta^{-1}((-\infty,b]),$$ which is a constructible set.
\endproof
\subsection{Lower semi-continuity of mixed degrees}
Let $\sL$ be a $\pi$-ample line bundle on $\sX$. For every $p\in S$, denote by $L_p$ the restriction of $\sL$ to the fiber $X_p.$

\begin{lem}\label{lowersemicontinuous}
Let $(\sX, f)$ be a family of $d$-dimensional dominant rational self-maps on $S$.
Let $\sL$ be a $\pi$-ample line bundle on $\sX$. 
Let $m_1,\dots, m_d\in \Z_{\geq 0}.$
Then the function $$p\mapsto ((f_p^{m_1})^*L_p\cdots (f_p^{m_d})^*L_p)$$ is lower semi-continuous on $S$.
In particular, for every $i=0,\dots,d$, the function $$p\mapsto \deg_{i,L_p}f_p$$ is lower semi-continuous on $S$.
\end{lem}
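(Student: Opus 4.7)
The plan is to apply the criterion of Lemma~\ref{lemlsconneoth} to the function $\theta(p) := ((f_p^{m_1})^*L_p \cdots (f_p^{m_d})^*L_p)$. The basic object is the relative graph: let $g \colon \sX \dashrightarrow \sX \times_S \cdots \times_S \sX$ ($d$ factors) be the $S$-rational multi-map $x \mapsto (f^{m_1}(x), \ldots, f^{m_d}(x))$, and let $\sX' \subseteq \sX \times_S \sX \times_S \cdots \times_S \sX$ ($d+1$ factors) denote the Zariski closure of its graph, with projections $\pi_1, \ldots, \pi_d \colon \sX' \to \sX$ to the last $d$ copies. Flatness of $\pi$ together with geometric integrality of its fibers over the integral base $S$ forces $\sX$ to be irreducible, hence $\sX'$ is irreducible with generic fiber equal to the graph $\Gamma_{\eta_S}$ of $g$ at the generic point of $S$. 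For each $p \in S$, the closure $\Gamma_p$ of the graph of $g_p$ in $X_p^{d+1}$ is irreducible of dimension $d$, and by definition $\theta(p) = (\pi_1^*L_p \cdots \pi_d^*L_p \cdot [\Gamma_p])$.

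To check condition (i), suppose $y \in \overline{\{x\}}$, and choose a morphism $T = \Spec R \to S$ from a DVR sending the generic point to $x$ and the closed point to $y$. Working over $T$, let $\sX'_T$ denote the Zariski closure in $\sX_T^{\times_T (d+1)}$ of the graph of the base-changed multi-map $g_T \colon \sX_T \dashrightarrow \sX_T^{\times_T d}$. Under our hypotheses the total space $\sX_T$ remains irreducible (flat with geometrically integral generic fiber), so $\sX'_T$ is irreducible, dominates $T$, and is therefore flat over $T$. Its generic fiber is $\Gamma_x$ (up to a residue-field extension that leaves intersection numbers unchanged), while its special fiber $(\sX'_T)_{0_T}$ is $d$-dimensional and contains $\Gamma_y$ as an irreducible component of multiplicity one (the generic multiplicity), so $[(\sX'_T)_{0_T}] = [\Gamma_y] + E$ as cycles with $E$ effective. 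By the constancy of intersection numbers along flat families~\cite[Proposition 10.2]{Fulton1984},
\[ \theta(x) \;=\; (\pi_1^*L_y \cdots \pi_d^*L_y \cdot [\Gamma_y]) \;+\; (\pi_1^*L_y \cdots \pi_d^*L_y \cdot E). \]
Each $\pi_i^*L_y$ is nef as the pullback of the ample $L_y$, so the $E$-term is non-negative and $\theta(x) \geq \theta(y)$, verifying (i).

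To check condition (ii) for $a < \theta(x)$, replace $S$ by $\overline{\{x\}}$ with its reduced-induced-scheme structure, so that $x$ is the generic point of an integral noetherian scheme. The Raynaud-Gruson flattening theorem~\cite[Theorem 5.2.2]{Raynaud1971} applied to $\sX' \to S$ supplies a dense open $V \subseteq S$ over which $\sX'|_V \to V$ is flat. The main technical point---and the principal obstacle---is to shrink $V$ further so that $\sX'_p = \Gamma_p$ for every $p \in V$ (rather than $\Gamma_p$ plus spurious components arising from the relative indeterminacy of $g$). This uses that $\sX'$ is irreducible with generic fiber $\Gamma_{\eta_S} = \Gamma_x$, together with the openness in $V$ of geometric reducedness and geometric irreducibility of the fiber for flat projective morphisms with integral generic fiber: on the refined $V$, any fiber is $d$-dimensional geometrically integral and contains $\Gamma_p$ of the same dimension, forcing equality. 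A second application of~\cite[Proposition 10.2]{Fulton1984} makes $\theta$ constant on $V$, equal to $\theta(x) > a$, establishing (ii). The final assertion of Lemma~\ref{lowersemicontinuous} on $p \mapsto \deg_{i,L_p} f_p$ is the special case $m_1 = \cdots = m_i = 1$, $m_{i+1} = \cdots = m_d = 0$.
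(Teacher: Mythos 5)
Your proof is correct, and it is built from the same ingredients as the paper's argument: the criterion of Lemma \ref{lemlsconneoth}, the relative closure of the graph of $x\mapsto(f^{m_1}(x),\dots,f^{m_d}(x))$, conservation of number in flat projective families (\cite[Proposition 10.2]{Fulton1984}), and nefness of the pulled-back polarizations to absorb the extra components of special fibres. Where you differ is in how flatness is arranged: the paper proves the single statement that the mixed degree at every $p$ is at most its value at the generic point, with equality on a dense open, by flattening the whole family $\Gamma\to S$ via a Raynaud--Gruson blowup of the base and comparing each fibre of the flattened family with the fibrewise graph closure; you instead verify conditions (i) and (ii) of Lemma \ref{lemlsconneoth} separately, treating an arbitrary specialization $y\in\overline{\{x\}}$ by pulling back to a DVR --- where flatness of the graph closure is automatic, since an integral scheme dominating a DVR is flat --- and treating generic constancy on $\overline{\{x\}}$ by generic flatness together with constructibility/openness of geometric integrality of fibres. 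This avoids the blowup and strict-transform bookkeeping entirely, at the modest cost of invoking the existence of DVRs realizing specializations and the invariance of intersection numbers and graph closures under residue-field extension, which you correctly note. Two small points: your claim that $\Gamma_y$ occurs with multiplicity one in the special fibre is not justified, but it is also not needed --- all you use is $[(\sX'_T)_{0}]\geq[\Gamma_y]$ as cycles, which is automatic because $\Gamma_y$ is a $d$-dimensional component of a pure $d$-dimensional fibre; and the dense open over which $\sX'\to S$ is flat follows already from generic flatness, so attributing it to the Raynaud--Gruson theorem (whose content is the blowup statement the paper actually uses) is a slight misattribution, though the fact itself is of course correct.
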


\proof[Proof of Lemma \ref{lowersemicontinuous}]Denote by $\kappa$ the generic point of $S$.
By Lemma \ref{lemlsconneoth}, we only need to show that for any $(\pi: \sX\to S, f, \sL)$ satisfying our assumption, we have
$$((f_p^{m_1})^*L_p\cdots (f_p^{m_d})^*L_p)\leq ((f_{\kappa}^{m_1})^*L_{\kappa}\cdots (f_{\kappa}^{m_d})^*L_{\kappa})$$ on $S$ with equality on a
Zariski open subset of $S$.

\medskip

Let $\Gamma_{\kappa}$ be the closure of the image of the map $X_{\kappa}\dashrightarrow X_{\kappa}^d$ sending $x$ to $(f_{\kappa}^{m_1}(x),\dots,f_{\kappa}^{m_d}(x))$.
Let $\Gamma$ be its closure in $\sX_{/S}^d.$

By \cite[Theorem 5.2.2]{Raynaud1971}, there is a blowup $\phi:S'\to S$ such that the strict transformation $\Gamma'\to S'$ of $\Gamma\to S$ by $\phi$ is flat. Set $\sX':=\sX\times_S S'$ with structure morphism $\pi': \sX'\to S'$ and $f':=f\times_S \id$.  
Set $\psi:=\id\times_S \phi: \sX'\to \sX$ and $\sL':=\psi^*\sL.$
Then $(\pi': \sX'\to S', f', \sL')$ has the same property as $(\pi: \sX\to S, f,\sL)$.
Let $\kappa'$ be the generic point of $S'$ and $\Gamma_{\kappa}'$ be the closure of the image of the map ${X'}_{\kappa'}\dashrightarrow {X'}_{\kappa'}^d$ sending $x$ to $((f_{\kappa'}')^{m_1}(x),\dots,(f_{\kappa'}')^{m_d}(x))$.
Then $\Gamma'$ is its closure in ${\sX'}_{/S}^d.$ 
For every $p'\in S'$, $(X'_{p'},f_{p'}', L_{p'}')$ is a bass change of $(X_{p},f_{p}, L_{p}).$
Moreover $\psi$ is an isomorphism over a Zariski dense open subset of $S.$
So we may replace $(\pi: \sX\to S, f,\sL)$ by $(\pi': \sX'\to S', f', \sL')$ and assume further that the structure morphism $\pi_{\Gamma}: \Gamma\to S$ is flat.

For every $p\in S$, let $\Gamma_p''$ be the closure of the image of the map $X_{p}\dashrightarrow  X_p^d$ sending $x$ to $(f^{m_1}(x),\dots,f^{m_d}(x))$.
Then $\Gamma_p''$ is an irreducible component of $\Gamma_p.$ There is a Zariski dense open subset $U$ of $S$ such that for every $p\in U$, $\Gamma_p=\Gamma_p''.$
Let $F_i: \Gamma\to \sX$ be the $i$-th projection. Then we have 
\begin{equation}\label{equintermixd}
((f_p^{m_1})^*L_p\cdots (f_p^{m_d})^*L_p)=(F_1^*\sL|_{\Gamma_{p}''}\cdots F_d^*\sL|_{\Gamma_{p}''})\leq (F_1^*\sL|_{\Gamma_{p}}\cdots F_d^*\sL|_{\Gamma_{p}}),
\end{equation}
and the equality holds for $p\in U.$
By \cite[Proposition 10.2]{Fulton1984}, we have
\begin{equation}\label{equinterconst}
(F_1^*\sL|_{\Gamma_{p}}\cdots F_d^*\sL|_{\Gamma_{p}})=(F_1^*\sL|_{\Gamma_{\kappa}}\cdots F_d^*\sL|_{\Gamma_{\kappa}})=((f_{\kappa}^{m_1})^*L_{\kappa}\cdots (f_{\kappa}^{m_d})^*L_{\kappa}).
\end{equation}
Combine (\ref{equinterconst}) with (\ref{equintermixd}), we concludes the proof.
\endproof

\rem\label{remmixdegcon}For every $p\in S$, we have $((f_p^{m_1})^*L_p\cdots (f_p^{m_d})^*L_p)\in \Z.$ By Lemma \ref{lemlscimconcon}, 
the set $\{((f_p^{m_1})^*L_p\cdots (f_p^{m_d})^*L_p)|\,\, p\in S\}$ is finite and for every subset $F\subseteq \R$, $\{p\in S|\,\, ((f_p^{m_1})^*L_p\cdots (f_p^{m_d})^*L_p)\in F\}$
is a constructible subset of $S.$
\endrem

\medskip
The following example shows that the map $p\mapsto \deg_{L_p,i}(f_p)$ is not continuous in general.
\begin{exe}\cite[Example 4.2]{Xie2015} Consider the birational transformation $$f: [x:y:z]\mapsto [xz:yz+2xy:z^2]$$ of $\mathbb{P}^2$ over $\Spec \mathbb{Z}$. Denote by $L$ the hyperplane line bundle on $\mathbb{P}^2_{\mathbb{Z}}$. Then $f_p$ is birational for every prime $p\in \Spec \Z$. We have that
$\deg_{L_p}(f_p) = 1$ for $p=2$ and $\deg_{L_p}(f_p) = 2$ for any odd prime.
\end{exe}

\medskip

The function $p\in S\mapsto \lambda_i(f_p)$ of the $i$-th dynamical degree is the point-wise limit of the functions $p\in S\mapsto (\deg_{i,L_p}f_p)^{1/n}$.
By Lemma \ref{lowersemicontinuous}, the later function is lower semi-continuous.
However, as shown in Remark \ref{remlimlsc}, this does not directly imply the lower semi-continuity of the $i$-th dynamical degree.
To complete the proof of Theorem \ref{thmlscdynadeg}, we need to apply the lower bounds of the dynamical degrees obtained in Section \ref{secrecurdeg}.

\subsection{Lower semi-continuity of dynamical degrees}
\proof[Proof of Theorem \ref{thmlscdynadeg}]
Let $\kappa$ be the generic point of $S.$
By Lemma \ref{lemlsconneoth}, we only need to show that for any $(\pi: \sX\to S, f)$ satisfying our assumption, the followings hold:
\begin{points}
\item $\lambda_i(f_p) \leq \lambda_i(f_{\kappa})$ for all $p\in S$ 
\item for any $\beta < \lambda_i(f_{\kappa})$,
there is a nonempty open set $U$ of $S$,
such that for every point $p\in U$, $\lambda_i(f_{p})> \beta$.
\end{points}

Let $\sL$ be a $\pi$-ample line bundle on $\sX$. For every $p\in S$, denote by $L_p$ the restriction of $\sL$ to the fiber $X_p.$
By Lemma \ref{lowersemicontinuous}, for every integer $n>0$, we have $$\deg_{i,L_p}(f^n_p)\leq\deg_{i,L_{\kappa}}(f^n_{\kappa})$$ hence
$$\lambda_i(f_p)\leq \lambda_i(f_{\kappa}).$$ This implies (i).

Set $V:=\{0\}\cup \{i=1,\dots, d|\,\, \mu_{i+1}(f_{\kappa})<\mu_i(f_{\kappa})\}.$ It is clear that $\{0,d\}\subseteq V.$
We first prove (ii) for $i\in V.$
Let $\beta<\la_i(f_{\kappa}).$ By Theorem \ref{thmkeyestlowb}, 
there are decreasing numbers $\alpha_1,\dots, \alpha_d\in \R_{>0},$ with $\beta_i:=\prod_{j=1}^i\alpha_j>\beta,$ $\gamma\in (0,1)$, $\epsilon\in((\frac{\beta}{\beta_i})^{\frac{1}{2i}},1)$ and $m\in \Z_{\geq 1}$, such that the conditions $I_i(\alpha_1,\dots, \alpha_d; \gamma, \epsilon, m)$, $J_i(\alpha_1,\dots, \alpha_d; \gamma, \epsilon, m)$ and $K_i(\alpha_1,\dots, \alpha_d; \gamma, \epsilon, m; 0)$ are satisfied for $(X_{\kappa},f_{\kappa},L_{\kappa})$.
Note that the conditions $I_i(\alpha_1,\dots, \alpha_d; \gamma; \epsilon; m)$, $J_i(\alpha_1,\dots, \alpha_d; \gamma; \epsilon; m)$ and $K_i(\alpha_1,\dots, \alpha_d; \gamma; \epsilon; m;0)$ for $(X_p,f_p, L_p)$ only depend on the top intersection numbers of $(f_p^{2m})^*L_p, (f_p^{m})^*L_p, L_p$. 
By Lemma \ref{lowersemicontinuous}, there is a Zariski dense open subset $U$ of $S$, such that for every $p\in U$, all top intersection numbers of $(f_p^{2m})^*L_p, (f_p^{m})^*L_p, L$ are constant. Hence for every $p\in U$, the conditions $I_i(\alpha_1,\dots, \alpha_d; \gamma, \epsilon, m)$, $J_i(\alpha_1,\dots, \alpha_d; \gamma, \epsilon, m)$ and $K_i(\alpha_1,\dots, \alpha_d; \gamma, \epsilon, m; 0)$ are satisfied for $(X_{p},f_{p},L_{p})$. By Theorem \ref{thmkeyestlowb}, we get $\lambda_i(f_p)> \beta.$ This implies (ii) for $i\in V.$

\medskip

As in the proof of Theorem \ref{thmcomputela},
for every $i\in \{0,\dots, d\}\setminus V$, set $$a_i:=\max\{j\in V| j<i\}$$ and  $$b_i:=\min\{j\in V| j>i\}.$$ 
We have $$\la_i(f_{\kappa})=(\la_{a_i}(f_{\kappa})^{b_i-i}\la_{b_i}(f_{\kappa})^{i-a_i})^{1/(b_i-a_i)}.$$
Pick $\theta\in (\beta/\la_i(f_{\kappa}), 1)$.
As $a_i, b_i\in V$, there is a nonempty open set $U$ of $S$,
such that for every point $p\in U$, $$\lambda_{a_i}(f_{p})> \theta \lambda_{a_i}(f_{\kappa})$$ and 
$$\lambda_{b_i}(f_{p})> \theta \lambda_{b_i}(f_{\kappa}).$$
As $\la_i(f_p), i=0,\dots, d$ is log concave, 
we have $$\la_i(f_p)\geq (\la_{a_i}(f_{p})^{b_i-i}\la_{b_i}(f_{p})^{i-a_i})^{1/(b_i-a_i)}$$$$\geq \theta(\la_{a_i}(f_{\kappa})^{b_i-i}\la_{b_i}(f_{\kappa})^{i-a_i})^{1/(b_i-a_i)}$$$$=\theta \la_i(f_{\kappa})>\beta.$$
This concludes the proof.
\endproof

\medskip

Theorem \ref{thmlscdynadeg}  implies that for every family of dominant rational self-maps over $S$, $\la_i(f_p)$ can not be arbitrarily closed to $1$ if it is not equal to $1.$
\begin{cor}\label{corwellorded}Let $(\sX, f)$ be a family of $d$-dimensional dominant rational maps on $S$. For every $i=0,\dots,d,$ the set $\Lambda_i((\sX, f)):=\{\la_i(f_p)|\,\, p\in S\}$ is well-ordered i.e. every subset of $\Lambda_i((\sX, f))$ has a minimal element. In particular, there is $\la\in (1,+\infty)$ such that 
for every $p\in S$, if $\la_i(f_p)>1$, then $\la_i(f_p)\geq \la.$
\end{cor}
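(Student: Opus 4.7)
\textbf{Plan for Corollary \ref{corwellorded}.}

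The plan is to combine the lower semi-continuity of $p\mapsto \la_i(f_p)$ established in Theorem \ref{thmlscdynadeg} with the fact that $S$ is noetherian, converting a potential bad descending chain in $\Lambda_i((\sX,f))$ into a strictly descending chain of Zariski closed subsets of $S$.

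First I would record the sublevel sets. For each real number $a$, set
\[
S_a := \{p\in S\mid \la_i(f_p)\leq a\}.
\]
By Theorem \ref{thmlscdynadeg}, the function $p\mapsto \la_i(f_p)$ is lower semi-continuous, so each $S_a$ is Zariski closed in $S$. Note also that for $a<b$ one has $S_a\subseteq S_b$.

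Next I would rule out infinite strictly decreasing sequences in $\Lambda_i((\sX,f))$. Suppose, for contradiction, that there exists a sequence $t_0>t_1>t_2>\cdots$ of elements of $\Lambda_i((\sX,f))$. For each $n\geq 0$, pick $p_n\in S$ with $\la_i(f_{p_n})=t_n$. Then $p_n\in S_{t_n}$, while $p_n\notin S_{t_{n+1}}$ since $\la_i(f_{p_n})=t_n>t_{n+1}$. Hence
\[
S_{t_0}\supsetneq S_{t_1}\supsetneq S_{t_2}\supsetneq\cdots
\]
is an infinite strictly decreasing chain of closed subsets of $S$, contradicting the noetherianity of $S$. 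This shows that $\Lambda_i((\sX,f))$ has no infinite strictly decreasing sequence. Since $\Lambda_i((\sX,f))\subseteq [1,+\infty)$ inherits a total order, this is equivalent to being well-ordered: any nonempty subset $T\subseteq \Lambda_i((\sX,f))$ without a minimal element would, by (dependent) choice, produce such a forbidden descending sequence.

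Finally I would deduce the last assertion. If the subset $\Lambda_i^{>1}:=\Lambda_i((\sX,f))\cap (1,+\infty)$ is empty then every $\la>1$ works vacuously; otherwise, by well-orderedness, $\Lambda_i^{>1}$ has a minimal element $\la$, which lies in $(1,+\infty)$ and satisfies the required property.

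The argument is essentially formal once lower semi-continuity is in hand, so the main conceptual content is Theorem \ref{thmlscdynadeg}; there is no substantial obstacle here beyond correctly converting the chain condition on $\Lambda_i$ into a chain condition on closed subschemes of the noetherian base.
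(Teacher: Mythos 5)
Your proposal is correct and follows essentially the same route as the paper: both use the lower semi-continuity from Theorem \ref{thmlscdynadeg} to make the sublevel sets $\{p\in S\mid \la_i(f_p)\leq a\}$ Zariski closed and then invoke noetherianity of $S$ to force stabilization of descending chains. The paper packages this as showing $\inf F$ is attained (via the identity $\beta=\sup_{p\in Z_\beta}\la_i(f_p)$ and a non-increasing sequence tending to the infimum), whereas you argue by contradiction through the descending chain condition with witness points $p_n$; this is only a cosmetic difference.
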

\proof
Let $F\subseteq \Lambda_i((\sX, f))$. For every $\beta\in \R$, define $Z_{\beta}:=\la_i^{-1}((-\infty, \beta])$ which is Zariski closed. 
For $\beta\in \Lambda_i((\sX, f))$, we have 
\begin{equation}\label{equabetasup}\beta=\sup_{p\in Z_{\beta}}\la_i(f_p).
\end{equation}
Set $b:=\inf F$. There is a decreasing sequence $\beta_n\in F$ such that $\lim\limits_{n\to \infty}\beta_n=b.$
The noetherianity of $S$ shows that there is $N\geq 0$ such that $Z_{\beta_n}=Z_{\beta_N}$ for all $n\geq N.$
By (\ref{equabetasup}), we get $\beta_n=\beta_N$ for every $n\geq N$. Hence $b=\beta_N$. This implies that $\Lambda_i((\sX, f))$ is well-ordered. 
As $\Lambda_i((\sX, f))\cap (1,+\infty)$ is well-ordered,
the last statement is true.
\endproof

\subsection{Decidability}
Theorem \ref{thmlscdynadeg} implies that for a family of dominant rational self-maps over $S$. For every $\beta>0$, $i=1,\dots, d$, and $p\in S$, the question whether 
$\la_i(f_p)>\beta$ is decidable. 

\medskip
Let $(\sX, f)$ be a family of $d$-dimensional dominant rational maps on $S$. Let $\sL$ be a $\pi$-ample line bundle on $\sX$. 
For every $p\in S$, denote by $X_p, f_p, L_p$ the specialization of $\sX, f,L$ at $p.$

For $t\geq 0$, let $\la_i^-(f_p, t)$ be the lower bounds for $\la_i(f_p)$ as defined in the proof of Theorem \ref{thmcomputela} in Section \ref{subsectalgo}.
It has the following properties:
\begin{points}
\item for every $i=0,\dots, d$, $\la_i^-(f_p, t), t\geq 0$ is increasing;
\item $\lim_{t\to \infty}\la_i^-(f_p, t)=\la_i(f_p)$;
\item for every $i,t$, the value of $\la_i^-(f_p, t)$ only relies on finitely many mixed degrees for $X_p, f_p, L_p$.
\end{points}

For every $t\geq 0$, by (iii) and Remark \ref{remmixdegcon}, the function 
$p\in S\mapsto \la_i^-(f_p, t)$ is locally constant on $|S|$. In particular, it only takes finitely many values.



\begin{cor} Let $(\sX, f)$ be a family of $d$-dimensional dominant rational maps on $S$. Let $\sL$ be a $\pi$-ample line bundle on $\sX$. 
Then for every $\beta\in \R$ and $i=0,\dots, d$, there is $T\geq 0$ such that for every $p\in S$, $\la_i(f_p)>\beta$ if and only if $\la_i^-(f_p, T)>\beta.$
\end{cor}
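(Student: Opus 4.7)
The plan is to extract the finite set $F$ by a compactness argument in the constructible topology on $S$.

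First, I would show that for each fixed $\omega \in \Omega$, the locus $W_\omega := \{p \in S : \Theta(p,\omega)=1\}$ is a constructible subset of $S$. Indeed, unpacking the definitions of $I_i, J_i, K_i$ with $N=0$, the condition $\Theta(p,\omega)=1$ is a Boolean combination of strict inequalities in finitely many intersection numbers of $(f_p^{2m})^*L_p$, $(f_p^{m})^*L_p$ and $L_p$. By Lemma \ref{lowersemicontinuous} and Remark \ref{remmixdegcon}, each such intersection number, as a function of $p$, is integer-valued with finitely many values, and its fibers are constructible; hence $W_\omega$ is constructible.

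Next, let $U := \{p \in S : \la_i(f_p) > \beta\}$. By Theorem \ref{thmlscdynadeg}, $U$ is Zariski open in $S$, in particular constructible. Endow the underlying set $|S|$ with the constructible topology; since $S$ is noetherian, $|S|$ is compact, and every constructible set is clopen in it. Therefore $U$ is closed in $|S|$, and hence itself compact in the constructible topology. On the other hand, by Theorem \ref{thmkeyestlowb} applied pointwise, for every $p \in U$ there exists $\omega_p \in \Omega$ with $\Theta(p, \omega_p)=1$, that is, $p \in W_{\omega_p}$. Thus the family $\{W_\omega\}_{\omega \in \Omega}$ is a cover of $U$ by sets which are open in the constructible topology.

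By compactness I extract a finite subcover: there exist $\omega_1, \dots, \omega_k \in \Omega$ with $U \subseteq W_{\omega_1} \cup \dots \cup W_{\omega_k}$. Setting $F := \{\omega_1, \dots, \omega_k\}$, the ``if'' direction follows from $W_{\omega_j} \subseteq U$: whenever $\Theta(p,\omega)=1$ for some $\omega \in F$, Theorem \ref{thmkeyestlowb} combined with clause (2) in the definition of $\Theta$ gives $\la_i(f_p) > \epsilon^{2i}\beta_i > \beta$. The ``only if'' direction is exactly the statement that the $W_{\omega_j}$ cover $U$.

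I expect the main subtlety to be in Step 1, namely justifying carefully that $W_\omega$ is constructible: one must observe that, although each individual level set of a mixed degree function is merely constructible (not Zariski locally closed in general), this is enough for our compactness machinery because the constructible topology is strictly finer than the Zariski topology and $|S|$ is still compact in it. Once this is in hand, the rest of the argument is a clean finite-cover extraction; no new dynamical input beyond Theorems \ref{thmlscdynadeg} and \ref{thmkeyestlowb} and Lemma \ref{lowersemicontinuous} is needed.
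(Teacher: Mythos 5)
Your proof is correct and follows essentially the same route as the paper: the paper likewise uses Remark \ref{remmixdegcon} to see that each $\{p:\Theta(p,\omega_n)=1\}$ is constructible (hence clopen in the constructible topology), writes the locus $\{\la_i(f_p)>\beta\}$ as the union of these sets via Theorem \ref{thmkeyestlowb}, and extracts a finite subcover by compactness of $|S|$ in the constructible topology. The only cosmetic difference is that you explicitly invoke Theorem \ref{thmlscdynadeg} to note this locus is Zariski open, which matches the paper's (slightly garbled) compactness step.
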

\proof
Set $Z:=\{p\in S|\,\, \la_i(f_p)> \beta\}.$ 
By Theorem \ref{thmlscdynadeg}, $Z$ is Zariski open. 
For every $n\geq 1$, write $V_t:=\{p\in S|\,\, \la_i^-(f_p, t)>\beta\}.$ By (i) and (ii) above, $V_t$ is increasing and $Z=\cup_{t\geq 0}V_t.$
By (iii) above and Remark \ref{remmixdegcon}, $V_t$ is constructible in $S$. As $|Z|$ is compact and $|V_t|, t\geq 0$ are open in the constructible topology, there is $T\geq 0$ such that 
$Z=\cup_{t=0}^TV_t=V_T.$ This concludes the proof.
\endproof

\section{Periodic points of cohomologically hyperbolic maps}\label{sectioncohhyp}
Let $X$ be a variety over a field $\bk$. Let $f: X\dashrightarrow X$ be a dominant rational self-map. 
For $i=1,\dots,d$, we say that $f$ is \emph{$i$-cohomologically hyperbolic} if $\la_i(f)$ is strictly larger than the other dynamical degrees i.e. $$\mu_i(f)>1 \text{ and } \mu_{i+1}(f)<1.$$
We say that $f$ is \emph{cohomologically hyperbolic} if it is $i$-cohomologically hyperbolic for some $i=1,\dots,d$ i.e. $\mu_j(f)\neq 1$ for every $j=1,\dots,d.$

\medskip

Let $X_{f}$ be the set of (scheme-theoretic) points $x$ whose orbit is well-defined i.e. for every $n\geq 0$, $f^n(x)\not\in I(f).$
More generally, for every Zariski open subset $V$ of $X$, let $V_f$ be the set of points $x\in X_f$ whose orbit $O_f(x)$ is contained in $V.$
Let $X_{f}(\overline{\bk}):=X_f\cap X(\overline{\bk})$.
For every $n\geq 0$, let $\Per_n(f)(\overline{\bk})$ be the set of $n$-periodic closed points in $X_{f}(\overline{\bk})$.
Set $\Per(f)(\overline{\bk}):=\cup_{n\geq 1}\Per_n(f)(\overline{\bk}).$
For every Zariski open subset $V$ of $X$, let $\Per_V(f)(\overline{\bk})$ be the set of $x\in \Per(f)(\overline{\bk})$ whose orbit $O_f(x)$ is contained in $V.$

\medskip

The aim of this section is to prove the following result, which implies Theorem \ref{thmperiodisodenseint}.
\begin{thm}\label{thmperiodisodense} If $f$ is cohomologically hyperbolic, then for every Zariski dense open subset $V$ of $X$, $\Per_V(f)(\overline{\bk})$ is Zariski dense in $X$.
\end{thm}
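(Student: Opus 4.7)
The plan is to implement the reduction-modulo-$p$ strategy sketched after Theorem \ref{thmperiodisodenseint}, with Theorem \ref{thmlscdynadeg} and the recursive inequalities of Section \ref{secrecurdeg} as the two new inputs. First I would spread $(X,f,V)$ over a finitely generated $\Z$-algebra $R$, giving a family $(\sX,f,\sV)$ over $S:=\Spec R$ whose generic fibre recovers the input, and shrink $S$ so that this is a family of dominant rational self-maps in the sense of Section \ref{sectionsslsc} with $\sV$ fibrewise Zariski open and dense. Assuming that $f$ is $i$-cohomologically hyperbolic, Theorem \ref{thmlscdynadeg} applied to $\la_i$ (from below) and to $\la_{i-1},\la_{i+1}$ (from above, via the log-concavity and equality $\la_0=\la_d=1$ on subvarieties) shows that the locus where $f_s$ remains $i$-cohomologically hyperbolic is open. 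After further shrinking, I pick a closed point $s\in S$ with residue field $\F_q$ at which $f_s$ is $i$-cohomologically hyperbolic, reducing the problem to a dominant rational self-map over a finite field.

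Over $\overline{\F_q}$, Fakhruddin's argument based on Hrushovski's twisted Lang--Weil estimate (Theorem \ref{hrushovski}) produces, for $n$ in a set of positive density, periodic orbits of $f_s$ whose counts grow like $\la_i(f_s)^n$. The main new step, and the critical one, is to show that a positive proportion of these periodic points are \emph{isolated} in the sense needed for lifting, \ie that $\Gamma_{f_s^n}$ meets the diagonal $\Delta$ properly and with contribution essentially $1$ at them. For this I would combine Theorem \ref{thmmaincompudegone} and Remark \ref{remimprorecu} with \cite[Proposition 3.5]{Matsuzawa}: the recursive inequality, applied on the graph of $f_s^n$ inside $X_s\times X_s$, gives a lower bound on the isolated-fixed-point contribution to $(\Gamma_{f_s^n}\cdot \Delta)$ of order $\la_i(f_s)^n$, while the positive-dimensional components of $\Fix(f_s^n)$ contribute at a strictly smaller rate because of the gap between $\la_i$ and the other dynamical degrees. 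This produces, via Corollary \ref{corisoperiodiccohhyp}, a Zariski dense set of isolated periodic points inside any prescribed $\sV_s$, since the complement of $\sV_s$ is a proper closed subvariety of $X_s$ and $f_s$ restricted to it has strictly smaller $i$-th dynamical degree.

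The last step is to lift these isolated periodic points to the generic fibre. Localizing $R$ at the prime corresponding to $s$ gives a DVR, and an isolated periodic point of $f_s$ on the special fibre deforms to a periodic point of $f$ on the generic fibre by Lemma \ref{dvr} (the DVR version of \cite[Theorem 5.1]{fa}). Since we arranged the special-fibre periodic points to lie in $\sV_s$, the lifted periodic points lie in $\sV$ over an open neighbourhood of $s$, hence in $V$ on the generic fibre. As the specialization of a non-dense constructible subset of $X$ cannot be dense in $X_s$, Zariski density of the isolated periodic points on $X_s$ forces Zariski density of $\Per_V(f)$ on $X$, completing the proof. The main obstacle I expect is the isolation step: one must convert the numerical, birational statement of Theorem \ref{thmmaincompudegone} into a genuine intersection-theoretic lower bound on the isolated part of $\Fix(f_s^n)$ that is uniform in $n$, and then balance it against the contribution of the non-isolated locus and of the complement of $\sV_s$ so that enough genuinely useful periodic points survive.
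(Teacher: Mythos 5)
Your overall skeleton (spread out over a finitely generated base, use Theorem \ref{thmlscdynadeg} to find a closed point where $f_p$ stays $i$-cohomologically hyperbolic, get density of periodic points over the finite residue field from Hrushovski's estimate, isolate, and lift through a DVR via Lemma \ref{dvr}) is exactly the paper's strategy. The genuine gap is in your isolation step, which you yourself flag as the main obstacle: you propose to bound from below the \emph{isolated} contribution to the intersection number $(\Gamma_{f_s^n}\cdot\Delta)$ by something of order $\la_i(f_s)^n$ and to show that positive-dimensional components of $\Fix(f_s^n)$ contribute at a strictly smaller rate. Neither Theorem \ref{thmmaincompudegone} nor \cite[Proposition 3.5]{Matsuzawa} gives any such fixed-point count: they are statements about bigness of the class $M_m:=L_{2m}+\mu_i^m\mu_{i+1}^mL-\epsilon^m\mu_i^mL_m$, and converting this into a Lefschetz-type lower bound for isolated periodic points of a merely rational map (with excess intersection along the positive-dimensional part of $\Fix(f_s^n)$, and over a finite field where Frobenius-twisted phenomena and inseparability enter) is not carried out and is not known at this level of generality. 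The paper avoids counting entirely: from bigness of $M_m$ one sets $U:=X\setminus\bB_X(M_m)$ and shows (Lemma \ref{lemcurvedegregrowth}, via the recursion Lemma \ref{lemrecuineq}) that any irreducible curve whose orbit meets $U_f$ and stays one-dimensional has $(L_n\cdot C)$ growing exponentially; hence no curve of $f^r$-fixed points can meet $U_f$, so \emph{every} periodic point with orbit in $U$ is isolated (Corollary \ref{corisoperiodiccohhyp}). Density inside the prescribed open set then comes for free from Proposition \ref{Fakhruddin 5.5}, which already allows an arbitrary dense open $W$; in particular your auxiliary claim that $f_s$ restricted to the complement of $\sV_s$ has strictly smaller $i$-th dynamical degree is both unjustified (that complement need not be invariant, nor is a restriction defined) and unnecessary.

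Two further points would need repair even granting isolation. First, ``localizing $R$ at the prime corresponding to $s$ gives a DVR'' is only true when $\Spec R$ is one-dimensional, i.e.\ essentially when the field of definition is a number field; for $\bk$ of positive transcendence degree (e.g.\ $\bk=\C$, or positive characteristic with $\trd\geq 1$) the local ring at a closed point has higher dimension, and the paper handles this by an induction on the transcendence degree, specializing one step at a time over an algebraically closed intermediate field and using the theorem over that smaller field (not the finite-field case directly) as the input on the special fiber; your proposal omits this and so does not cover the general $\bk$ of the statement. Second, the clean way to keep the whole orbit inside $V$ after lifting is the paper's first line: replace $(X,f)$ by $(V,f|_V)$ and apply Lemma \ref{dvr}, which is stated for quasi-projective schemes, rather than tracking $\sV$ through the specialization argument. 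Also, Hrushovski/Fakhruddin does not produce counts growing like $\la_i(f_s)^n$; it produces points of $V\cap\Phi_{q^m}$ in any affine open avoiding the bad locus, and only the resulting Zariski density is used.
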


\subsection{Rational self-maps over finite fields}
The following result shows that, for dominant rational self-maps over finite fields, the periodic points are always Zariski dense. 
It was originally proved by Fakhruddin and Poonen \cite[Proposition 5.5]{fa} for endomorphisms.  Their proof indeed works for arbitrary dominant rational self-maps with minor modifications. For the convenience of the readers, we provide a proof here in the general case. Our proof is based on the proof of  
\cite[Proposition 5.2]{Xie2015}.
\begin{pro}\label{Fakhruddin 5.5}
Let $p>0$ be a prime number.
Let $X$ be a variety over $\overline{\mathbb{F}_p}$. Let $f:X\dashrightarrow X$ be a dominant rational self-map. Then for every Zariski dense open subset $W$ of $X$, $\Per_W(f)(\overline{\mathbb{F}_p})$ is Zariski dense in $X$.
\end{pro}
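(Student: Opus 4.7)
The plan is to follow the Fakhruddin--Poonen strategy for endomorphisms, adapting it to the rational setting by carefully handling the indeterminacy locus of $f$.

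First, I would spread out $(X,f,W)$ to a model $(X_0,f_0,W_0)$ over a sufficiently large finite field $\F_q \subset \overline{\F_p}$, so that $X_0$ is geometrically irreducible, $f_0$ is dominant rational, and $W_0 \subset X_0$ is a Zariski dense open subset. To prove density of $\Per_W(f)(\overline{\F_p})$, it suffices to show that for every proper closed subvariety $Y \subsetneq X$, one has $\Per_W(f)(\overline{\F_p}) \not\subset Y$; after enlarging $q$, we may assume $Y$ is defined over $\F_q$.

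The central observation is that all relevant subschemes---$I(f)$, $W$, and $Y$---are $\F_q$-rational, hence preserved by the $q$-power Frobenius $\phi \colon X \to X$. Therefore the open subset $U := (X \setminus I(f)) \cap (W \setminus Y)$ is $\phi$-invariant. Suppose $x \in U(\overline{\F_p})$ satisfies the twisted equation $f(x) = \phi^m(x)$ for some $m \geq 1$. A direct induction yields $f^k(x) = \phi^{km}(x) \in U \subset W$ for every $k \geq 0$, so the orbit $O_f(x)$ is well-defined and contained in $W$. Since $x \in X(\F_{q^N})$ for some $N$, we obtain $f^{Nm}(x) = \phi^{Nm}(x) = x$; hence $x \in \Per_W(f)(\overline{\F_p}) \setminus Y$.

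To produce such an $x$, I would apply Hrushovski's twisted Lang--Weil estimate (Theorem~\ref{hrushovski}) to the irreducible graph $\Gamma_{f_0} \subset X_0 \times X_0$, whose two projections to $X_0$ are dominant ($\pi_1$ birational, $\pi_2 = f_0 \circ \pi_1$). This furnishes a constant $c>0$ such that, for every sufficiently large $m$,
$$\#\{x \in X(\overline{\F_p}) : (x, \phi^m(x)) \in \Gamma_{f_0}\} \;\geq\; c\, q^{md}.$$
The contribution of the proper closed subvariety $T := I(f_0) \cup Y_0 \cup (X_0 \setminus W_0)$ to this count is $O(q^{m(d-1)})$, since every irreducible component of $\Gamma_{f_0} \cap (T \times X_0)$ has dimension at most $d-1$ (and one may apply Hrushovski's bound, or a direct Lang--Weil count, to each component). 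For $m$ large the main term dominates, forcing the existence of $x \in U(\overline{\F_p})$ with $f(x) = \phi^m(x)$, which by the previous paragraph lies in $\Per_W(f)(\overline{\F_p}) \setminus Y$.

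The only genuine novelty compared to the endomorphism case is controlling the indeterminacy locus $I(f)$: one must both confine orbits of candidate points away from it (using $\phi$-invariance of $X \setminus I(f)$) and ensure that the positive-dimensional fibers of $\Gamma_{f_0}$ over points of $I(f_0)$ do not inflate the twisted count beyond the bound above. Both are handled by the dimension estimate on $\Gamma_{f_0} \cap (T \times X_0)$.
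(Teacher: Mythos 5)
Your overall strategy---produce a point $x$ with $f(x)=\phi^m(x)$ via Hrushovski's twisted Lang--Weil estimate, then use that $f$, $W$, $Y$ and $I(f)$ are defined over $\F_q$ to get $f^k(x)=\phi^{km}(x)\in U$ for all $k$, hence a periodic orbit contained in $W$ and missing $Y$---is exactly the strategy of the paper's proof. The gap is in how you invoke Theorem \ref{hrushovski}. As stated, that theorem requires the ambient variety to be affine and, crucially, the second projection $\pi_2\colon V\to X^{\phi_q}$ to be quasi-finite. For the full graph $\Gamma_{f_0}\subseteq X_0\times X_0$ neither hypothesis holds in general: $X_0$ need not be affine, and $\pi_2$ has positive-dimensional fibres as soon as $f$ contracts a subvariety to a point (it is only generically finite). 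So the asserted lower bound $\#\{x:(x,\phi^m(x))\in\Gamma_{f_0}\}\geq c\,q^{md}$ is not justified by the theorem you quote. In addition, your error estimate---that the contribution of $T=I(f_0)\cup Y_0\cup(X_0\setminus W_0)$ is $O(q^{m(d-1)})$---does not follow from the dimension bound on $\Gamma_{f_0}\cap(T\times X_0)$ alone: counting twisted points on a $(d-1)$-dimensional locus needs an upper-bound form of twisted Lang--Weil (or a degree/B\'ezout argument uniform in $m$), and one must also exclude the possibility that a component of $\Gamma_{f_0}\cap(T\times X_0)$ is contained in the Frobenius graph $\Phi_{q^m}$ (i.e.\ $f$ agrees with $\phi^m$ on a positive-dimensional subvariety), in which case that ``error'' is not even finite for that $m$. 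None of this is addressed, and it is precisely the delicate part of your route.

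The paper's proof avoids both problems: arguing by contradiction, it puts the closure of $\Per(f)(\overline{\F_p})$ together with $I(f)$ into a single proper closed subset $Y$, chooses an affine open $U\subseteq X\setminus Y$ defined over $\F_q$ on which $f$ is an embedding (quasi-finiteness of $f|_U$ suffices for a general dominant, hence generically finite, $f$), and applies Theorem \ref{hrushovski} to $V=\Gamma_f\cap(U\times U)$, where the hypotheses do hold; then only non-emptiness of $V\cap\Phi_{q^m}$ is used, so no error estimate is needed. Your argument becomes correct (and coincides with the paper's) if you make the same move: before invoking the estimate, replace the full graph by $\Gamma_f\cap(U\times U)$ for an affine $U$ inside $(X\setminus I(f))\cap(W\setminus Y)$ on which $f$ is quasi-finite; the subtraction step can then be deleted. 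The reduction of density to ``for every proper closed $Y$ there is a periodic orbit in $W$ avoiding $Y$'' and the Frobenius-commutation argument in your second paragraph are fine and match the paper.
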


The key ingredient to prove Proposition \ref{Fakhruddin 5.5} is Hrushovski's twisted Lang-Weil estimate.
\begin{thm}[\cite{hu7,Shuddhodan2022}]\label{hrushovski}
Let $g:X\rightarrow \Spec k$ be an irreducible affine variety of dimension $r$
over an algebraically closed field $k$ of characteristic $p$, and let $q$ be a
power of $p$. We denote by $\phi_q$ the $q$-Frobenius map of $k$, and by $X^{\phi_q}$ the same scheme as $X$ with $g$ replaced by $g\circ \phi_q^{-1}.$
Let $V\subseteq X\times X^{\phi_q}$ be an irreducible subvariety of dimension $r$ such that both projections $$\pi_1:V\rightarrow X \text{ and } \pi_2:V\rightarrow X^{\phi_q}$$ are
dominant and the second one is quasi-finite. Let $\Phi_q \subseteq
X\times X^{\phi_q}$ be the graph of the $q$-Frobenius map $\phi_q$. Set
$$u=\frac{\deg\pi_1}{\deg_{insep}\pi_2},$$ where $\deg\pi_1$ denotes the degree of field extension $K(V)/K(X)$ and $\deg_{insep}\pi_2$ is the purely inseparable degree of the field extension $K(V)/K(X)$.

Then there is a constant
$C$ that does not depend on $q$, such that
$$|\#(V \bigcap \Phi_q )-uq^r|\leq Cq^{r-1/2}.$$
\end{thm}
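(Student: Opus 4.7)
The plan is to derive this twisted Lang--Weil estimate from the Grothendieck--Lefschetz trace formula combined with Deligne's purity theorem, following the strategy later made precise by Varshavsky and Shuddhodan. The classical Lang--Weil statement is the special case where $V$ is the diagonal in $X \times X^{\phi_q}$ (so $u=1$) and one counts ordinary $\F_q$-points; the twisted version allows $V$ to be an arbitrary correspondence and reinterprets $V \cap \Phi_q$ as the set of ``twisted fixed points'' $\{x \in X \mid (x,\phi_q(x)) \in V\}$.

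First I would reduce to a workable geometric setup: using de Jong's alterations one replaces $X$ by a smooth variety at the cost of modifying $u$ by a predictable rational factor involving the alteration degree. Simultaneously, $V$ is replaced by its closure inside a smooth compactification of $X \times X^{\phi_q}$, and one tracks how the quasi-finiteness of $\pi_2$ allows one to localize attention to the open stratum where the counting genuinely takes place. The purely inseparable contribution $\deg_{\mathrm{insep}} \pi_2$ appears here, since points of $V$ lying over a generic point of $X^{\phi_q}$ must be counted with multiplicity equal to this inseparable degree.

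Second, the correspondence $V$ induces a cohomological operator $V^{*}$ on $H^{\bullet}_c(X,\Q_\ell)$ for $\ell \neq p$, and the Grothendieck--Lefschetz trace formula for correspondences yields
$$\#(V \cap \Phi_q) \;=\; \sum_{i=0}^{2r}(-1)^{i}\,\Tr\bigl(F_q \circ V^{*} \mid H^{i}_c(X,\Q_\ell)\bigr).$$
The top-degree piece $i=2r$ supplies the leading term: on $H^{2r}_c(X,\Q_\ell)(r) \simeq \Q_\ell$ the operator $F_q \circ V^{*}$ acts by the degree $u = \deg \pi_1/\deg_{\mathrm{insep}}\pi_2$, so this contribution equals $u\,q^{r}$.

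Third, the error bound is extracted from Deligne's Weil~II: the weights on $H^{i}_c(X,\Q_\ell)$ are bounded above by $i$, so the eigenvalues of $F_q \circ V^{*}$ on $H^{i}_c$ have complex absolute value at most $q^{i/2}$, and summing the $2r$ lower-degree terms produces an error of size $O(q^{r-1/2})$, with an implied constant controlled by the total Betti numbers and the geometric degree of $V$, both independent of $q$. The main obstacle is the passage to singular and possibly non-proper $X$ and $V$: the trace formula in its clean form requires smoothness and properness, and extending it while correctly recovering the combinatorial factor $u = \deg \pi_1 / \deg_{\mathrm{insep}}\pi_2$ (in particular handling the separable/inseparable split of $\pi_2$) is the crux. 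Hrushovski's original proof bypasses this by working model-theoretically in the theory of difference fields, encoding the count as an intersection number in an ultraproduct of finite fields and reducing back to classical Lang--Weil after passing to a suitable finite cover.
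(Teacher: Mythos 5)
The paper does not prove Theorem \ref{hrushovski} at all: it is imported from Hrushovski \cite{hu7}, and the text immediately after the statement notes that Hrushovski's original proof is model-theoretic and points to \cite{Shuddhodan2022} for an algebro-geometric proof. So there is no internal argument to compare yours with; what you have written is an outline of the strategy of the cited algebro-geometric proof (Shuddhodan--Varshavsky), not of anything carried out in this paper, and for the purposes of this paper the correct move is exactly the author's, namely to quote the result.

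Judged as a standalone proof, your outline has a genuine gap, which you yourself flag as ``the crux'' but do not close. The naive formula $\#(V\cap\Phi_q)=\sum_i(-1)^i\Tr\bigl(F_q\circ V^*\mid H^i_c(X,\Q_\ell)\bigr)$ is false in general for a non-proper (and here possibly singular) $X$ and an arbitrary correspondence; what is available is Deligne's conjecture (Fujiwara, Varshavsky): the Lefschetz--Verdier local terms become the naive ones only after twisting the correspondence by a sufficiently high power of Frobenius. The substance of the theorem is quantitative uniformity: one needs a bound, independent of $q$, on how large that twist must be and on the action of the correspondence on the lower cohomology, and note that $V$ lives inside $X\times X^{\phi_q}$, which itself varies with $q$, so asserting that the implied constant is ``controlled by the total Betti numbers and the geometric degree of $V$, both independent of $q$'' is precisely the statement that has to be proved --- it occupies most of \cite{Shuddhodan2022}. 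Likewise the leading coefficient $u=\deg\pi_1/\deg_{insep}\pi_2$ does not simply fall out of the action on $H^{2r}_c$: it requires an analysis of the local terms (equivalently, of multiplicities of the twisted fixed points over the generic point), which is where the inseparable degree of $\pi_2$ enters; your de Jong alteration step also needs care, since altering $X$ changes the fixed-point count itself, not just $u$, by an amount that must again be controlled uniformly in $q$. In short, the route you name is the right one for the cited reference, but the proposal records the difficulties rather than resolving them.
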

Hrushovski's original proof of Theorem \ref{hrushovski} relies on model theory. See \cite{Shuddhodan2022} for an algebro-geometric proof.

\medskip

\begin{proof}[Proof of Proposition \ref{Fakhruddin 5.5}]
After replacing $X,f$ by $W, f|_W$, we may assume that $W=X.$
Let $Z:=\overline{\Per(f)(\overline{\mathbb{F}_p})}$ and assume by contradiction that $Z\neq X$. Set $Y:=Z\bigcup I(f). $ Then $Y$ is a proper closed subset of $X$. Let $q=p^n$ be such that $X$ and $f$ are defined
over the subfield $\mathbb{F}_q$ of $\overline{\mathbb{F}_p}$ having exactly $q$
elements. Let $\phi_q$ denote the Frobenius morphism acting on $X$ and let
$\Gamma_{f}$(resp. $\Gamma_m$) denote the graph of $f$ (resp.
$\phi_q^m$) in $X\times X$. Let $U$ be an irreducible affine open
subset of $X\setminus Y$ that is also defined over $\mathbb{F}_q$ and such that $f$ is
an open embedding from $U$ to $X$. Set
$V=\Gamma_{f}\bigcap(U\times U)$. By Theorem \ref{hrushovski},
there exists an integer $m>0$ such that
(V$\bigcap$$\Gamma_m$)($\overline{\mathbb{F}_p}$)$\neq$$\emptyset$ i.e. there
exists $u\in U(\overline{\mathbb{F}_p})$ such that $f(u)=\phi_q^m(u)\in U$.
Since $f$ is defined over $\mathbb{F}_q$, it follows that $f^l(u)=\phi_q^{lm}(u)\in U$ for all $l\geq 0$. This
contradicts the definition of $Y$ and $U$.
\end{proof}

\subsection{Isolated periodic points}
A periodic point $x\in \Per(f)(\overline{\bk})$ is called \emph{isolated} if it is isolated in $\Per_r(f)$ for some period $r\geq 1$ of $x$.
The following result shows we can lift isolated periodic points from the special fiber to the generic fiber.
This result was originally proved by Fakhruddin and Poonen \cite[Theorem 5.1]{fa} for endomorphisms. However, its proof works for arbitrary dominant rational self-maps with minor modifications. For the convenience of the reader, we provide a proof here in the general case. Our proof is based on the proof of  
\cite[Proposition 5.4]{Xie2015}.

\begin{lem}\label{dvr}
Let $\sX$ be a quasi-projective scheme, flat over a discrete valuation ring $R$ with fraction field $K$ and residue field $k_p$. Let $f_R$  be a dominant rational self-map $\sX\dashrightarrow \sX$ over $R$. Let $X_p$ be the
special fiber of $\sX$ and $X$ be the generic fiber of
$\sX$. Assume that $X_p$ is reduced and $X_p\not\subseteq I(f_R).$
Let $f$ be the restriction of $f_R$ to $X$, and $f_p$ be the
restriction of $f_R$ to $X_p$.  Let $U_p$ be a Zariski dense open subset of $X_p$ such that $U_p\cap I(f_R)=\emptyset.$
Let $r\geq 1$ and $x_p\in U_p$ be a closed point in $\Per_{U_p}(f_p)$ of period $r$. Assume that $X_p$ is regular at $x_p$ and 
$x_p$ is isolated in $\Per_r(f_p).$
Then there is a closed isolated periodic point $x\in \Per_r(X)$ such that $x_p\in \overline{\{x\}}.$

Moreover, if isolated closed periodic points in $\Per_{U_p}(X_p)$ are Zariski dense in $X_p$, then the set of isolated $f$-periodic points is Zariski dense in $X$.
\end{lem}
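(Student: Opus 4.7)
My plan is to realize the lift as a closed point of the generic fiber of an irreducible component of the $R$-scheme $\Fix(f_R^r)$, produced by a dimension count using regularity at $x_p$ and isolatedness in the special fiber.

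First, using that $x_p$ and its full $f_p$-orbit lie in $U_p \subset \sX \setminus I(f_R)$, I would produce an open neighborhood $V_R \subset \sX$ of $x_p$ on which $f_R^r : V_R \to \sX$ is a genuine morphism, and define $\Fix(f_R^r) \subset V_R$ as the pullback of the diagonal of $\sX \times_R \sX$ along $(\id, f_R^r)$. Regularity of $\sX$ at $x_p$ together with flatness of $\sX \to \Spec R$ provides local parameters $(\pi, t_1, \dots, t_d)$ with $\pi$ a uniformizer of $R$, in which $\Fix(f_R^r)$ is cut out by the $d$ equations $t_i - (f_R^r)^* t_i = 0$. Krull's Hauptidealsatz then gives $\dim_{x_p} \Fix(f_R^r) \geq 1$, while the scheme-theoretic special fiber at $x_p$ is $\Fix(f_p^r)$, which has dimension zero by the isolatedness hypothesis. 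Hence $\dim_{x_p} \Fix(f_R^r) = 1$, and I pick an irreducible component $C$ of dimension one through $x_p$.

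The map $C \to \Spec R$ must be dominant: otherwise $C \subset X_p$ would give a one-dimensional subscheme of $\Fix(f_p^r)$ containing $x_p$, contradicting isolatedness. Since $C$ is irreducible of dimension one and dominates $\Spec R$, its generic fiber $C_K$ is a single closed point $x$ of $X$ (defined over a finite extension of $K$), and $C = \overline{\{x\}}^{\sX}$, so $x_p \in \overline{\{x\}}^{\sX}$. As $x \in \Fix(f_R^r)$ one has $f^r(x) = x$, and $x$ is isolated in $\Fix(f^r)$ because $\{x\}$ is an isolated component of the generic fiber of $\Fix(f_R^r)$.

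For the moreover part, assuming $\sX$ (hence $X$) is irreducible, I argue by contradiction. If the set of lifts were contained in a proper closed $W \subsetneq X$, each irreducible component of $W$ would have dimension $\leq d-1$; so each irreducible component $Z$ of the closure $\overline{W}^{\sX}$ would have dimension $\leq d$ with generic point in $X$, forcing $\dim(Z \cap X_p) \leq d-1$. Thus $\overline{W}^{\sX} \cap X_p$ would be a proper closed subset of $X_p$; but it contains each isolated periodic $x_p$ as the specialization of its lift in $W$, and the set of such $x_p$ is by hypothesis Zariski dense in $X_p$, a contradiction. The main technical subtlety I expect is the local coordinate step when the residue field $\kappa(x_p)$ is a nontrivial finite extension of $k_p$: one should then replace $\sO_{\sX, x_p}$ by an étale local model or its strict Henselization so that a regular system of parameters extending $\pi$ is available, and verify that the lifted closed point obtained there descends to a closed point of $X$ over a finite extension of $K$.
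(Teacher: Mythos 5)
Your lifting argument is in substance the paper's own: the paper works with $\Delta_{\sX}\cap\Gamma_{f_R^r}$ inside $\sX\times_R\sX$ (your $\Fix(f_R^r)$ is just its pullback to $\sX$), gets $\dim_{(x_p,x_p)}\geq 1$ from regularity of the ambient space at $(x_p,x_p)$, identifies the special fibre of this scheme near $(x_p,x_p)$ with the fixed scheme of $f_p^r$ using $U_p\cap I(f_R)=\emptyset$ and the fact that the orbit of $x_p$ stays in $U_p$, and then concludes, exactly as you do, that every component through the point is one-dimensional and dominates $\Spec R$, its generic point being the desired isolated periodic point of $X$. Your ``moreover'' argument is a mild variant: you argue by contradiction with a dimension count on the closure in $\sX$ of a proper closed subset of $X$ (this needs $\sX$ irreducible, as you note, and you should also restrict to the set of isolated periodic points of $X_p$ at which $\sX$ is regular --- still dense because $X_p$ is reduced --- so that the first part applies to them), whereas the paper, for each nonempty open $U'\subseteq X$, removes the closure of a Cartier divisor containing $X\setminus U'$ to produce an open $\sV\subseteq\sX$ with $\sV\cap X=U'$ and $\sV\cap X_p\neq\emptyset$, and lifts a point from $\sV\cap X_p$; the paper's version avoids irreducibility and makes the choice of a regular $x_p$ explicit, but both routes work.

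The one step of yours that does not follow from the stated hypotheses is the coordinate step. A regular system of parameters of the form $(\pi,t_1,\dots,t_d)$ exists only when $\pi\notin\mathfrak{m}_{x_p}^2$, i.e.\ only when the special fibre $X_p$ is itself regular at $x_p$; regularity of the total space $\sX$ at $x_p$ does not give this (think of $xy=\pi$), and at such a point the ideal of $\Fix(f_R^r)$ need not be generated by $d$ elements, so the Hauptidealsatz bound $\dim_{x_p}\Fix(f_R^r)\geq 1$ --- the heart of the proof --- is lost. This is not a removable technicality: for the family of conics $XY=\pi Z^2$ in $\P^2_R$ with the automorphism $[X:Y:Z]\mapsto[aX:a^{-1}Y:Z]$, $a\in R^\times$, $a\not\equiv\pm 1\pmod{\pi}$, the node $[0:0:1]$ is an isolated fixed point of the special fibre, $\sX$ is regular there, yet no fixed point of the generic fibre specializes to it. So what you actually use is smoothness of $\sX\to\Spec R$ at $x_p$ (equivalently, regularity of $X_p$ at $x_p$, beyond the residue-field separability issue); passing to a strict henselization, as you propose, only cures the residue-field extension, not a singularity of $X_p$ at $x_p$. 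To be fair, the paper's proof hides exactly the same point in the unproved assertion that $\sX\times_R\sX$ is regular at $(x_p,x_p)$, which again needs smoothness over $R$ rather than mere regularity of $\sX$; in the intended application one may choose $x_p$ in the smooth locus of $X_p$, so nothing is lost, but you should state the hypothesis you are really using.
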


\begin{proof}
 The set of periodic $\overline{k_p}$-points of $f_p$ of period 
$n$ can be viewed as the set of $\overline{k_p}$-points in
$\Delta_{X_p}\bigcap\Gamma_{f_p^n}$, where $\Delta_{X_p}$ is the diagonal
and $\Gamma_{f_p^n}$ is the graph of $f_p^n$ in $X_p\times X_p$.

For any positive integer $r\geq 1$, consider the subscheme
$\Delta_{\sX}\bigcap\Gamma_{f_R^r}$ of
$\sX\times_R\sX$, where $\Delta_{\sX}$ is the
diagonal and $\Gamma_{f_R^r}$ is the graph of $f_R^r$ in
$\sX\times_R \sX$. 
Note that $(x_p,x_p)\subset \Delta_{\sX}\bigcap\Gamma_{f_R^r}.$
As $\sX\times_R \sX$ is regular at $(x_p,x_p)$, $\dim_{(x_p,x_p)}\Delta_{\sX}\bigcap\Gamma_{f_R^r}\geq 1.$ 
As $U_p\cap I(f_R)=\emptyset$, and $x_p\in \Per_{U_p}(f_p)$,
$x_p\not\in I(f_R^r)$, $\Delta_{X_p}\bigcap\Gamma_{f_p^n}$ and the special fiber of  $\Delta_{\sX}\bigcap\Gamma_{f_R^r}$ are locally the same at $(x_p,x_p).$
As $x_p$ is isolated in $\Delta_{X_p}\bigcap\Gamma_{f_p^n}$, we have $\dim_{(x_p,x_p)}\Delta_{\sX}\bigcap\Gamma_{f_R^r}=1$ and every irreducible component of $\Delta_{\sX}\bigcap\Gamma_{f_R^r}$ passing through $(x_p,x_p)$ dominates $\Spec R.$ Pick an irreducible component $\sV$ of $\Delta_{\sX}\bigcap\Gamma_{f_R^r}$
passing through $(x_p,x_p)$. Let $x'$ be the generic point of $\sV$. Then $x'\subseteq \Delta_{X}\bigcap\Gamma_{f^n}$, where $\Delta_{X}$ is the diagonal
and $\Gamma_{f^n}$ is the graph of $f^n$ in $X\times X$. Identify $\Delta_{X}$ with $X$, we get a closed isolated periodic point $x\in \Per_r(X)$ such that $x_p\in \overline{\{x\}}.$

Now assume that isolated closed periodic points in $\Per_{U_p}(X_p)$ are Zariski dense in $X_p$.
We identify $\sX$ with $\Delta_{\sX}$. For any open
subset $U'$ of $X$, let $Z$ be a Cartier divisor of $X$ containing
$X\setminus U'$. Let $\sZ$ be the closure of $Z$ in $\sX$,
then codim$(\sZ)=1$ and every component of $\sZ$ meets
$Z$.  Every irreducible component of $X_p$ is of codimension $1$. If
$X_p\subseteq \sZ$, every irreducible component of $X_p$ is a component of
$\sZ$. Since $X\bigcap X_p=\emptyset$, we get
$X_p\not\subseteq\sZ$. Let $\sV=\sX\setminus\sZ$ and
$V_p=\sV\bigcap X_p$, then $\sV\bigcap X=U'$ and
$V_p\neq\emptyset$. Hence $V_p\cap U_p\neq\emptyset.$
As $X_p$ is reduced, by our assumption,
there is a closed isolated periodic point $x_p$ of period $r\geq 1$ in $\Per_{U_p}(X_p)\cap V_p$ such that $\sX$ is regular at $x_p.$
The previous paragraph shows that there is a closed isolated periodic point $x\in \Per_r(X)$ such that $x_p\in \overline{\{x\}}.$
Then $x\in U'$, this concludes the proof.
\end{proof}

\medskip

Next we show that for cohomologically hyperbolic self-maps, periodic points under mild conditions are isolated.

\begin{lem}\label{lemcurvedegregrowth}
Assume that $X$ is projective. Let $L$ be an ample line bundle on $X.$
If $f$ is $i$-cohomologically hyperbolic, then for every $\beta<\mu_i$ there is an affine Zariski open subset $U$ of $X$ such that for every irreducible curve $C$, if $C\cap U_f\neq \emptyset$ and $\dim f^n(C)=1$ for all $n\geq 0$,
we have $$\liminf_{n\to \infty}(L_n\cdot C)^{1/n}\geq \beta.$$
\end{lem}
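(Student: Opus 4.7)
The plan is to convert the bigness statement from Theorem \ref{thmmaincompudegone} into a recursive inequality for the intersection sequence $a_n := (L_n \cdot C)$. Since $f$ is $i$-cohomologically hyperbolic we have $\mu_i > 1 > \mu_{i+1}$, so $U(i) = i$, and Theorem \ref{thmmaincompudegone} (via Remark \ref{remimprorecu}) gives that, for $\epsilon \in (0,1)$ close to $1$ and $m$ sufficiently large, the class
$$E := L_{2m} + \mu_i^m \mu_{i+1}^m L - \epsilon^m \mu_i^m L_m$$
is big in $\widetilde{\Pic}(X)_{\R}$. I then write $E = A + D$ on a suitable birational model $X_{\pi}$ with $A$ ample and $D$ effective, let $B \subseteq X$ be the image of $\Supp(D)$ under $\pi$, and pick an effective ample divisor $H$ on $X$ containing $B$. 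I set $U := X \setminus H$, which is affine.

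Given an irreducible curve $C$ with $C \cap U_f \neq \emptyset$ and $\dim f^n(C) = 1$ for every $n$, any point $x \in C \cap U_f$ has $f^n(x) \in U$ for every $n$; in particular $f^n(C)$ meets $U$, so its strict transform on $X_{\pi}$ is not contained in $\Supp(D)$. The nef-plus-effective decomposition of $E$ then yields $E \cdot (f^n)_* C \geq 0$, and the projection formula~(\ref{equationprofubd}) turns this into the recursion
$$a_{n+2m} + \mu_i^m \mu_{i+1}^m a_n \geq \epsilon^m \mu_i^m a_{n+m} \qquad \text{for all } n \geq 0.$$

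For each residue $r \in \{0, 1, \ldots, m-1\}$, the subsequence $b^r_k := a_{r+km}$ satisfies the hypothesis of Lemma \ref{lemrecuineq} with $\gamma := (\epsilon \mu_i)^m$ and $\alpha\beta = (\mu_i\mu_{i+1})^m$. Since $\mu_{i+1} < 1 < \mu_i$, the discriminant $\gamma^2 - 4(\mu_i\mu_{i+1})^m$ is positive for $m$ large, so I may take the two positive roots of $x^2 - \gamma x + (\mu_i\mu_{i+1})^m$, giving $\alpha \approx (\epsilon\mu_i)^m$ and $\beta \approx (\mu_{i+1}/\epsilon)^m$; choosing $\epsilon > \mu_{i+1}$ forces $\beta < 1$. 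The initial condition $b^r_{k+1} > \beta\, b^r_k$ must hold for some shift $k = k_0(r)$, for otherwise $b^r_k \leq \beta^k b^r_0 \to 0$, contradicting $a_n = L \cdot (f^n)_* C \geq 1$, which holds because $L$ is ample and $(f^n)_* C$ is a non-zero effective $1$-cycle under the dimension hypothesis. Lemma \ref{lemrecuineq} applied to the shifted sequence then gives $\liminf_k (b^r_k)^{1/k} \geq \alpha$, and taking the minimum over $r$ yields
$$\liminf_{n \to \infty} a_n^{1/n} \geq \alpha^{1/m}.$$
Since $\alpha^{1/m} \to \epsilon\mu_i$ as $m \to \infty$, for any given $\beta < \mu_i$ I choose $\epsilon$ close to $1$ and then $m$ large so that $\alpha^{1/m} > \beta$, which completes the proof.

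The main obstacle is the bookkeeping across birational models: one must ensure that the bigness of $E$ on $X_{\pi}$ produces a single closed subset $B \subseteq X$ whose complement controls the positivity $E \cdot (f^n)_* C \geq 0$ simultaneously for every $n$. Taking $U$ to be the complement of a single ample divisor containing $B$ converts the geometric condition $f^n(C) \not\subseteq B$ cleanly into the orbit condition $C \cap U_f \neq \emptyset$, while also forcing $U$ to be affine as required.
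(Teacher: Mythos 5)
Your proposal is correct and follows essentially the same route as the paper's proof: the big class $L_{2m}+\mu_i^m\mu_{i+1}^mL-\epsilon^m\mu_i^mL_m$ from Theorem \ref{thmmaincompudegone} (using $\mu_{U(i)+1}=\mu_{i+1}$), positivity against $(f^n)_*C$ for curves meeting the complement of a proper closed subset, the resulting three-term recursion for $(L_n\cdot C)$, and Lemma \ref{lemrecuineq} with the initial condition supplied by $(L_n\cdot C)\geq 1$ and the fact that the "small" root is $<1$. Your two variations---taking $U$ to be the complement of an ample effective divisor containing the image of the effective part (which yields affineness of $U$ directly) and running the recursion over every residue class modulo $m$---are implementation refinements of the same argument, and in fact handle slightly more explicitly two points the paper leaves implicit (the affineness of $U$ and the passage from the subsequence of multiples of $m$ to the liminf over all $n$).
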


Recall that the above intersections $(L_n\cdot C)$ are well-defined as in the last paragraph of Section \ref{subseclinebun}.

\proof
To simplify the notations, we write $$\la_j:=\la_j(f), \mu_j:=\mu_j(f), \text{ and } L_n:=(f^n)^*L$$ for every $n\geq 0.$

We may assume that $\beta>1.$
Pick $\epsilon\in (0,1)$, such that $\mu_i\epsilon^2>\beta$ and $\mu_{i+1}\epsilon^{-2}<1.$
There is $m_0\geq 1$ such that for every $m\geq m_0$, we have 
\begin{equation}\label{equationsumuiihyp}\mu_i^{m}\epsilon^{2m}+\mu_{i+1}^m\epsilon^{-2m}\leq \mu_i^m\epsilon^m.
\end{equation}

By Theorem \ref{thmmaincompudegone} or \cite[Proposition 3.5]{Matsuzawa},  there is $m_{1}>m_0$, such that for every $m\geq m_{1},$
$$M_m:=L_{2m}+\mu_i^{m}\mu_{i+1}^mL-\epsilon^m\mu_i^{m}L_m$$ is big.
Fix $m\geq m_1.$
Set $U:=X\setminus \bB_X(M_m).$
Let $C$ be an irreducible curve satisfying  $C\cap U_f\neq \emptyset.$
Then for every $n\geq 0$, $f^n(C)\cap U\neq \emptyset.$
Then $(M_m\cdot f_*^n(C))$ is well-defined and non-negative. 
So we get 
$$(L_{(2+n)m}\cdot C)+\mu_i^{m}\mu_{i+1}^m(L_{nm}\cdot C)\geq \epsilon^m\mu_i^{m}(L_{(n+1)m}\cdot C).$$
As $\dim f^n(C)=1$ for all $n\geq 0$, $(L_n\cdot C)=(L\cdot (f^n)_*C)\geq 1$ (c.f. (\ref{equationprofubd}) of Section \ref{subseclinebun}).
As $\mu_{i+1}^m\epsilon^{-2m}<1$, there is $N\geq 0$ such that 
$$(L_{m(N+1)}\cdot C)>\mu_{i+1}^m\epsilon^{-2m}(L_{mN}\cdot C).$$
By (\ref{equationsumuiihyp}) and Lemma \ref{lemrecuineq}, we have 
$$\liminf_{n\to \infty}(L_n\cdot C)^{1/n}\geq \beta.$$
This concludes the proof.
\endproof

\begin{cor}\label{corisoperiodiccohhyp}
Let $X$ be a variety over $\bk$. Let $f: X\dashrightarrow X$ be a dominant rational self-map which is cohomologically hyperbolic.
Then there is a Zariski dense open subset $U$ of $X$ such that for every $x\in \Per_U(f)$, $x$ is isolate in $\Per_r(f)$, where $r\geq 1$ is a period of $x.$
\end{cor}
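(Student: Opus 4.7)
The plan is to apply Lemma \ref{lemcurvedegregrowth} and derive a contradiction if some closed point of $\Per_U(f)$ fails to be isolated in $\Per_r(f)$. After replacing $X$ by a projective compactification (which preserves the dynamical degrees and the configuration of periodic points inside the open subscheme $X$), I fix an index $i$ with $\mu_i(f) > 1$ and some $\beta \in (1, \mu_i(f))$, and let $U$ be the affine Zariski dense open subset of $X$ furnished by Lemma \ref{lemcurvedegregrowth}.

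Now let $x \in \Per_U(f)$ have period $r \geq 1$ and assume for contradiction that $x$ is not isolated in $\Per_r(f)$. Since the orbit of $x$ is entirely contained in $U \subseteq X \setminus I(f)$, we have $x \notin I(f^n)$ for any $n$, so in a Zariski neighborhood of $x$ the set $\Per_r(f)$ agrees with the scheme-theoretic fixed locus $\Fix(f^r)$, which is closed in $X \setminus I(f^r)$. Non-isolation then forces $\dim_x \Fix(f^r) \geq 1$, yielding a positive-dimensional irreducible component $Z$ of $\Fix(f^r)$ through $x$. A standard Bertini-type slicing produces an irreducible curve $C \subseteq Z$ containing $x$, and taking its Zariski closure inside $X$ gives a closed irreducible curve (still called $C$) whose generic point lies in $\Fix(f^r)$, so that $f^r$ restricts to the identity on a dense open of $C$.

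With $C$ in hand, I would verify the hypotheses of Lemma \ref{lemcurvedegregrowth}. Since $x \in C \cap U_f$, the curve meets $U_f$. Because $f^r|_C$ is generically the identity, $(f^r)_* C = C$ as cycles, so $f^r(C) = C$ is $1$-dimensional; if $f^s(C)$ were a point for some $0 \leq s < r$, applying $f^{r-s}$ would force $f^r(C)$ to have dimension $0$, a contradiction. Hence $\dim f^n(C) = 1$ for all $n \geq 0$, and Lemma \ref{lemcurvedegregrowth} delivers $\liminf_{n \to \infty} (L_n \cdot C)^{1/n} \geq \beta > 1$. Iterating $(f^r)_* C = C$ gives $(f^{rk})_* C = C$ for every $k \geq 0$, so the projection formula yields $(L_{rk} \cdot C) = (L \cdot (f^{rk})_* C) = (L \cdot C)$, which is constant in $k$. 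Therefore $(L_{rk} \cdot C)^{1/(rk)} \to 1$, contradicting the lower bound $\beta > 1$, and $x$ must be isolated in $\Per_r(f)$.

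The main obstacle is the scheme-theoretic setup where one extracts the curve $C \subseteq \Fix(f^r)$ through $x$: one must carefully pass between $\Per_r(f)$ as a set of closed points and the closed subscheme $\Fix(f^r)$ of $X \setminus I(f^r)$, using crucially that $x$ lies outside $I(f^r)$ so that this identification is valid near $x$. Once this translation is made and a curve $C$ is produced on which $f^r$ is generically the identity, the dynamical content is delivered entirely by Lemma \ref{lemcurvedegregrowth} together with the projection formula applied to the $f^r$-invariant cycle $C$.
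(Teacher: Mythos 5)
Your proposal is correct and follows essentially the same route as the paper: reduce to a projective model, take $U$ from Lemma \ref{lemcurvedegregrowth}, observe that a point of $\Per_U(f)$ that is non-isolated in $\Per_r(f)$ lies on an irreducible curve $C$ on which $f^r$ is generically the identity, and contradict the growth $\liminf_n (L_n\cdot C)^{1/n}\geq\beta>1$ with the constancy $(L_{rk}\cdot C)=(L\cdot C)$ coming from the projection formula; your extra detail on extracting $C$ from the fixed locus and verifying the hypotheses of the lemma only fills in steps the paper leaves implicit. Two cosmetic points: the index $i$ should be taken to be the largest one with $\mu_i(f)>1$, so that $f$ is genuinely $i$-cohomologically hyperbolic as Lemma \ref{lemcurvedegregrowth} requires, and (as the paper does) one should first shrink to a dense affine open before compactifying, since an abstract variety need not admit a projective compactification directly.
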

\proof
If Corollary \ref{corisoperiodiccohhyp} holds for one Zariski dense open subset $U$, it holds for any Zariski dense open subset $U'$ of $U.$

After replace $X$ by a Zariski dense affine open subset $X'$ and $f$ by $f|_{X'}$, we may assume that $X$ is quasi-projective.
Pick a projective compactification $X''$ of $X$. Then $f$ extends to an dominant rational self-map $f''$ on $X''.$
After replace $X,f$ by $X'',f''$, we may assume that $X$ is projective. Let $L$ be an ample line bundle on $X.$

Let $U$ as in Lemma \ref{lemcurvedegregrowth}.
Let $x\in \Per_U(f)$ of period $r\geq 1.$ If it is not isolated in $\Per_r(f)$ , then there is an irreducible curve $C$ containing $x$ such that $f^r|_C=\id.$ It follows that for every $n\geq 0$,
$$((f^{rn})^*L\cdot C)=(L\cdot C),$$
which is a contradiction.
\endproof

\subsection{Periodic points of cohomologically hyperbolic self-maps}
\proof[Proof of Theorem \ref{thmperiodisodense}]
As we may replace $X,f$ by $V, f|_V$, we only need to prove the case where $X=V.$

Assume that $f$ is $i$-cohomologically hyperbolic for some $i\geq 0.$
After base change by $\overline{\bk}$, we may assume that $\bk$ is algebraically closed.
We may assume that the transcendence
degree of $\mathbf{k}$ over its prime field $F$ is finite, since we can
find a subfield of $\mathbf{k}$ which is finitely generated over $F$ such that $X$ and $f$ are all defined over this subfield.
We complete the proof by induction on the transcendence degree of $\mathbf{k}$ over $F$.

\medskip

If $\bk$ is the closure of a finite field, we conclude the proof by Proposition \ref{Fakhruddin 5.5}.

If $\mathbf{k}=\overline{\mathbb{Q}}$, there is a regular subring $R$ of
$\overline{\mathbb{Q}}$ which is finitely generated over
$\mathbb{Z}$, such that $X$ and $f$ are defined over $R$ i.e.
there is a flat $R$-scheme $\pi: \sX\to \Spec R$, a dominant rational self-map $f_R: \sX\dashrightarrow \sX$ such that 
$X,f$ are the generic fiber of $\sX,f_R$.
After shrinking $\Spec R$, we may assume that for every point $p\in \Spec R$, the fiber $X_p$ is reduced and irreducible, $X_p\not\subseteq I(f_R)$ and  $f_p:=f_R|_{X_p}$ is a dominant rational map.
By Theorem \ref{thmlscdynadeg}, there is a closed point $p\in \Spec R$ such that $f_p$ is $i$-cohomologically hyperbolic. Since $R$ is regular
and $\Frac(R)$ is a number field, the
localization $R_{p}$ of $R$ at $p$ is a discrete valuation ring such that
$\overline{\Frac(R_{p})}=\overline{\mathbb{Q}}$. So
$R_{p}/pR_{p}=R/p$ is a finite field. 
Let $U_p$ be a Zariski dense open subset of $X_p$ with $I(f_R)\cap U_p=\emptyset.$ By Corollary \ref{corisoperiodiccohhyp},
after shrinking $U_p$, we may assume that every periodic point in  $\Per_{U_p}(f_p)$ are isolated.
By Proposition \ref{Fakhruddin 5.5}, $\Per_{U_p}(f_p)$ is Zariski dense. We then conclude the proof by Lemma \ref{dvr}.

If the transcendence degree of $\mathbf{k}$ over $F$ is greater than $1$, we pick an algebraically closed subfield $K$
of $\mathbf{k}$ such that the transcendence degree of $K$ over $F$ equals the transcendence degree of $\mathbf{k}$ over $F$ minus 1. Then we pick a subring $R$
of $\mathbf{k}$ which is finitely generated over $K$, such that $X$ and $f$ are all
defined over $R$. Since $\Spec R$ is regular on an open set, we may assume that $R$ is regular by adding finitely many inverses of elements in $R$. We may repeat the same arguments as in the case $\mathbf{k}=\overline{\mathbb{Q}}$ to conclude the proof.
\endproof

In the end, we give examples to show that for cohomologically non-hyperbolic maps, one can not determine whether the set of periodic points are Zariski dense from the dynamical degrees. 
\subsection{Examples of cohomologically non-hyperbolic maps}\label{subsectionnonhyp}
Let $X$ be a projective variety over $\bk$ of dimension $d\geq 1$. Let $i=1,\dots, d$ and $f: X\dashrightarrow X$ be a $i$-cohomologically hyperbolic maps.
Let $g:\P^N\to \P^N$ be an automorphism over $\bk$.
Consider the rational self-map $F:=f\times g: X\times \P^N\dashrightarrow X\times \P^N.$

\medskip
The product formula for relative dynamical degrees (c.f. \cite{Dinh2011}, \cite{Dang2020} and \cite[Theorem 1.3]{Truong2020}) shows the following lemma.
\begin{lem}\label{lemreldydegf}
We have 
$$\la_j(F)=\la_j(f) \text{ for } j\leq i;$$
$$\la_j(F)=\la_i(f) \text{ for } i\leq j\leq i+N;$$
$$\la_j(F)=\la_{j-N}(f) \text{ for } j\geq i+N.$$
In particular, the dynamical degrees of $F$ does not depend on $g$.
\end{lem}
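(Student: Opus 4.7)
The plan is to exploit the product structure by applying the product formula for relative dynamical degrees (see \cite{Dinh2011}, \cite{Dang2020} and \cite[Theorem 1.3]{Truong2020}) to the first projection $\pi_1: X\times \P^N\to X$, which is equivariant in the sense that $\pi_1\circ F=f\circ \pi_1$. For each $k=0,\dots, d+N$, this formula reads
\begin{equation*}
\la_k(F) \;=\; \max_{\max(0,k-N)\le j\le \min(k,d)} \la_j(f)\cdot \la_{k-j}(F\mid \pi_1),
\end{equation*}
where $\la_p(F\mid \pi_1)$ denotes the $p$-th relative dynamical degree of $F$ over $\pi_1$.

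Next I would compute the relative dynamical degrees. Since a generic fiber of $\pi_1$ is $\P^N$ and the restriction of $F$ to such a fiber is the automorphism $g$, one has $\la_p(F\mid \pi_1)=\la_p(g)$. As $g$ preserves $\sO_{\P^N}(1)$, every degree of $g^n$ with respect to $\sO_{\P^N}(1)$ equals $(\sO_{\P^N}(1)^N)=1$, so $\la_p(g)=1$ for all $p=0,\dots, N$. The formula thus simplifies to
\begin{equation*}
\la_k(F) \;=\; \max_{\max(0,k-N)\le j\le \min(k,d)} \la_j(f).
\end{equation*}

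Finally I would use the $i$-cohomological hyperbolicity of $f$. Since $\mu_i(f)>1>\mu_{i+1}(f)$ and the sequence $\mu_j(f)$ is decreasing (by log-concavity of dynamical degrees), the function $j\mapsto \la_j(f)$ is strictly increasing for $j\le i$ and strictly decreasing for $j\ge i$, hence uniquely maximized at $j=i$. A routine case split on $k$ then yields the three asserted formulas: when $k\le i$, the admissible interval $[\max(0,k-N),k]$ has its maximizer at the right endpoint $j=k$, giving $\la_k(F)=\la_k(f)$; when $i\le k\le i+N$, one checks $\max(0,k-N)\le i\le \min(k,d)$, so the interval contains $i$ and the max is $\la_i(f)$; when $k\ge i+N$, the interval $[k-N,\min(k,d)]$ starts at $k-N\ge i$, so its left endpoint $j=k-N$ is the maximizer, giving $\la_k(F)=\la_{k-N}(f)$.

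The substantive content is the invocation of the product formula and the identification of the relative dynamical degrees of a trivial $\P^N$-bundle with automorphism action as $1$; the remainder is elementary bookkeeping on where the unimodal sequence $(\la_j(f))_j$ attains its maximum over an interval, so no serious obstacle arises.
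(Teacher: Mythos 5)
Your proposal is correct and follows essentially the same route as the paper, which proves the lemma simply by invoking the product formula for relative dynamical degrees; you merely spell out the details the paper leaves implicit (the relative degrees of the trivial $\P^N$-factor with automorphism action are all $1$, and the maximum of the unimodal sequence $\la_j(f)$ over the admissible interval is located by the case split on $k$). The bookkeeping is accurate, so nothing further is needed.
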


We note that $\Per(g)$ is Zariski dense if and only if  $g$ is of finite order i.e. $g^m=\id$ for some $m\geq 1.$
Combining this fact with Theorem \ref{thmperiodisodense}, we get the following statement.
\begin{lem}\label{lemfperdensecon}
The set of periodic points of $F$ is Zariski dense if and only if $g$ is of finite order i.e. $g^m=\id$ for some $m\geq 1.$
\end{lem}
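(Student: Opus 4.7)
The plan is to handle the two implications separately, and no serious obstacle is expected since both directions follow almost formally from Theorem \ref{thmperiodisodense} together with elementary properties of products and of $\PGL_{N+1}$ acting on $\P^N$.

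For the sufficient direction, suppose $g^m = \id$. Because $g$ is an everywhere-defined automorphism, $I(F) = I(f) \times \P^N$ and hence $(X \times \P^N)_F = X_f \times \P^N$. For any $x \in \Per(f)$ of period $k$ and any $y \in \P^N$, a direct computation gives
\[
F^{km}(x,y) = (f^{km}(x),\, g^{km}(y)) = (x,y),
\]
so $\Per(f) \times \P^N \subseteq \Per(F)$. Since $f$ is cohomologically hyperbolic, Theorem \ref{thmperiodisodense} (applied with $V=X$) yields that $\Per(f)$ is Zariski dense in $X$; consequently $\Per(f) \times \P^N$, and a fortiori $\Per(F)$, is Zariski dense in $X \times \P^N$.

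For the necessary direction, suppose $\Per(F)$ is Zariski dense. If $(x,y) \in \Per(F)$ with $F^n(x,y) = (x,y)$, then comparing second coordinates gives $g^n(y) = y$, so $y \in \Per(g)$. Thus $\Per(F) \subseteq X \times \Per(g)$, and projecting to the second factor forces $\Per(g)$ to be Zariski dense in $\P^N$. It therefore suffices to show that Zariski density of $\Per(g)$ implies $g$ has finite order.

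For this last step I would invoke the Noetherianity of $\P^N$. Since $\Fix(g^a) \subseteq \Fix(g^b)$ whenever $a \mid b$, we can rewrite
\[
\Per(g) = \bigcup_{n \geq 1} \Fix(g^{n!}),
\]
an ascending union of closed subvarieties of $\P^N$. By Noetherianity the chain stabilizes at some $Y = \Fix(g^{n_0!})$, so $\Per(g) = Y$ is itself Zariski closed. If it is also Zariski dense, then $Y = \P^N$, i.e.\ $g^{n_0!} = \id$, so $g$ has finite order. The whole argument is essentially formal given Theorem \ref{thmperiodisodense}, and there is no step that I expect to require additional machinery.
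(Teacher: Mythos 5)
Your overall structure is the same as the paper's: the ``if'' direction combines Theorem \ref{thmperiodisodense} for $f$ with the observation that $F^{km}(x,y)=(x,y)$ for $x\in\Per(f)$ of period $k$, and the ``only if'' direction projects to the second factor and reduces to the assertion that Zariski density of $\Per(g)$ in $\P^N$ forces $g$ to have finite order; the paper records exactly this assertion as a noted fact and combines it with Theorem \ref{thmperiodisodense}. Those two reductions in your write-up are correct. The gap is in your justification of the noted fact. Noetherianity of $\P^N$ is the \emph{descending} chain condition on closed subsets; ascending chains of closed subsets in a Noetherian space need not stabilize, so ``by Noetherianity the chain $\Fix(g^{n!})$ stabilizes'' is not a valid step. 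Indeed, no purely topological argument of this shape can work, because the statement ``periodic points dense implies finite order'' is false for automorphisms of general projective varieties: for an elliptic curve $E$, the automorphism $g(x,y)=(x+y,y)$ of $E\times E$ has infinite order, yet $\Per(g)=E\times E_{\mathrm{tors}}$ is Zariski dense, and there $\bigcup_{n}\Fix(g^{n!})=\bigcup_{n}E\times E[n!]$ is a strictly increasing chain of closed subsets --- precisely the phenomenon your argument excludes by fiat. (Note also that mere closedness of each $\Fix(g^{n!})$ would not suffice: over a countable field such as $\overline{\F}_p$, a countable union of proper closed subsets can be dense, so you genuinely need the stabilization.)

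What saves the step is the linear structure of $g$, i.e.\ that $g$ comes from $\mathrm{PGL}_{N+1}(\bk)$. Lift $g$ to $\tilde g\in \mathrm{GL}_{N+1}(\bk)$; then $\Fix(g^n)$ is the union of the projectivized eigenspaces of $\tilde g^{\,n}$, and a short Jordan-form computation shows that $\Per(g)=\bigcup_{C}\P(V_C)$ is a \emph{finite} union of projectivized linear subspaces, indexed by the equivalence classes $C$ of eigenvalues of $\tilde g$ under ``the ratio is a root of unity'' (in characteristic $p$ the unipotent parts are killed by a $p$-power, so the corresponding generalized eigenspaces occur). Since $\P^N$ is irreducible, this finite union of closed sets is dense only if some $\P(V_C)=\P^N$, i.e.\ some power of $\tilde g$ is a scalar matrix, which is exactly the statement that $g$ has finite order in $\mathrm{PGL}_{N+1}$. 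Replacing your Noetherianity paragraph by this (elementary) eigenvalue argument makes the proof complete and in line with the paper's one-line derivation of the lemma.
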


\section{Applications to the Kawaguchi-Silverman conjecture}\label{sectionksc}

The aim of this section is to prove the Kawaguchi-Silverman conjecture for certain rational self-maps on projective surfaces. 
In particular, our result implies the Kawaguchi-Silverman conjecture for birational self-maps on projective surfaces. 
We first recall the arithmetic degree and the Kawaguchi-Silverman conjecture.

\subsection{Arithmetic degree}\label{subsecarithdeg}
The arithmetic degree was first defined in \cite{Kawaguchi2016} over a number field or a function field of characteristic zero.
As in \cite{Xie2023, Xie2023a} and \cite[Remark 1.14]{Matsuzawa2020a}, this definition can be extended to characteristic positive.
Here we only recall the definition in the number fields cases.

\medskip

Let $X$ be a projective variety over $\overline{\Q}.$ 
For every $L\in \Pic(X)$, we denote by $h_L: X(\overline{\Q})\to \R$ a Weil height associated to $L$. It is unique up to adding a bounded function.

%
%
%
%
%
%
%
%

Let $f\colon X\dashrightarrow X$ is a dominant rational self-map and $x\in X_f(\bk)$.
As in \cite{Jia2021,Xie2023,Xie2023a}, 
we will associate to $(X,f,x)$ a subset $$A_f(x)\subseteq [1,\infty]$$ as follows:
Let $L$ be an ample divisor on $X$, define
$$A_f(x):=\cap_{m\geq 0}\overline{\{(h_L^+(f^n(x)))^{1/n}|\,\, n\geq m\}}\subseteq [1,\infty]$$
to be the limit set  of the sequence $(h_L^+(f^n(x)))^{1/n}$, $n\geq 0$, where $h_L^+(\cdot):=\max\{h_L(\cdot),1\}$.
Indeed we have $A_f(x)\subseteq [1,\la_1(f)]$ by \cite{Kawaguchi2016,Matsuzawa2020a,Jia2021,Xie2023,Song2023, Xie2023a}.
The following lemma shows that the set $A_f(x)$ does not depend on the choice of $L$.

\begin{lemma}\cite[Lemma 2.7]{Xie2023}\label{lemsingwilldef}
	Let $\pi\colon X\dashrightarrow Y$ be a dominant rational map between projective varieties.
	Let $U$ be a Zariski dense open subset of $X$ such that $\pi|_U\colon U\to Y$ is well-defined.
	Let $L$ be an ample divisor on $X$ and $M$ an ample divisor on $Y$.
	Then there are constants $C\geq 1$ and $D>0$ such that for every $x\in U$, we have
	\begin{equation}\label{equationdomineq1}
		h_M(\pi(x))\leq Ch_L(x)+D.
	\end{equation}
	
	Moreover if $V:=\pi(U)$ is open in $Y$ and $\pi|_U\colon U\to V$ is an isomorphism, then 
	there are constants $C\geq 1$ and $D>0$ such that for every $x\in U$, we have
	\begin{equation}\label{equationbirdomineq}
		C^{-1}h_L(x)-D\leq h_M(\pi(x))\leq Ch_L(x)+D.
	\end{equation}
\end{lemma}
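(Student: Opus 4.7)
The plan is to resolve the indeterminacy of $\pi$ and then compare the relevant line bundles on the resolution via the Weil height machine.

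First I would choose a projective variety $Z$ and a birational morphism $p\colon Z\to X$ such that $q:=\pi\circ p\colon Z\to Y$ is a morphism; for instance, take $Z$ to be (a desingularization of) the graph of $\pi$ inside $X\times Y$. Since $U$ is contained in the domain of definition of $\pi$, $p$ restricts to an isomorphism $p^{-1}(U)\xrightarrow{\sim}U$, and I identify these two open sets. By the functoriality of heights on projective varieties, $h_L\circ p=h_{p^*L}+O(1)$ and $h_M\circ q=h_{q^*M}+O(1)$ on $Z(\overline{\Q})$, so the first inequality of the lemma reduces to the pointwise estimate $h_{q^*M}(z)\leq C\,h_{p^*L}(z)+O(1)$ for $z\in p^{-1}(U)$.

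To obtain this on $Z$, note that $p^*L$ is big and nef (pullback of an ample class by a birational morphism), while $q^*M$ is nef. By Nakamaye's theorem, the augmented base locus $\mathbb{B}_{+}(p^*L)$ coincides with the null locus of $p^*L$, which is exactly $\mathrm{Exc}(p)$. For $N$ sufficiently large the class $Np^*L-q^*M$ is big, and by the stability of augmented base loci under small perturbations inside the big cone one has $\mathbb{B}_{+}(Np^*L-q^*M)\subseteq \mathrm{Exc}(p)$. The standard lower bound for the Weil height of a big divisor off its augmented base locus then yields $h_{Np^*L-q^*M}(z)\geq -O(1)$ on $Z\setminus\mathrm{Exc}(p)\supseteq p^{-1}(U)$, which rearranges to the desired estimate with $C=N$. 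The second (two-sided) inequality then follows by applying the first one to the rational inverse $\pi^{-1}\colon Y\dashrightarrow X$ together with the open set $V\subseteq Y$: this gives $h_L(x)\leq C'\,h_M(\pi(x))+O(1)$ for $x\in U$, which combined with the upper bound produces the required two-sided estimate.

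The main technical point is ensuring that $\mathbb{B}_{+}(Np^*L-q^*M)$ avoids every point of $p^{-1}(U)$, so that the height lower bound is uniform on $U$. If one prefers to avoid Nakamaye's theorem, a more elementary route is to embed $X\hookrightarrow \mathbb{P}^n$ and $Y\hookrightarrow \mathbb{P}^m$ via very ample multiples of $L$ and $M$, write $\pi$ on $U$ in homogeneous coordinates as a tuple of homogeneous forms of some common degree $d$, and deduce $h_M(\pi(x))\leq d\,h_L(x)+O(1)$ directly from the triangle inequality on all places of the relevant number field.
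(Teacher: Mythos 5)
The paper does not actually prove this lemma: it is imported verbatim with the citation \cite[Lemma 2.7]{Xie2023}, so there is no internal argument to compare against; what follows is an assessment of your reconstruction on its own terms. The overall plan (pass to the graph, use functoriality of heights, bound $h_{Np^*L-q^*M}$ from below on $p^{-1}(U)$, and deduce the two-sided estimate in the birational case by applying the first inequality to $\pi^{-1}$ on $V$) is sound, and the ``moreover'' step is exactly the natural deduction. But there are two points in your main route that need repair. First, you cannot simultaneously take $Z$ to be a \emph{desingularization} of the graph and claim $p^{-1}(U)\xrightarrow{\sim}U$: the lemma is stated for arbitrary projective varieties, and if $X$ is singular at points of $U$ a resolution creates positive-dimensional fibres over $U$, so $\Exc(p)$ meets $p^{-1}(U)$ and the final inclusion $p^{-1}(U)\subseteq Z\setminus\Exc(p)$ --- which is precisely what your height lower bound requires --- fails. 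You must work with the graph $\Gamma$ itself, over which $p$ is an isomorphism above the domain of definition (this needs no normality). Second, with $Z=\Gamma$ possibly singular, Nakamaye's theorem in its classical smooth/normal form does not literally apply; you would need its extension to arbitrary projective schemes (Birkar), although your perturbation claim $\bB_+(Np^*L-q^*M)\subseteq\bB_+(p^*L)$ for $N\gg0$ is itself correct.

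Both issues can be bypassed without any Nakamaye-type input: for $A$ ample on $Z=\Gamma$ and $m\gg0$, the sheaf $p_*\sO_Z(mp^*L-A)\cong L^{\otimes m}\otimes p_*\sO_Z(-A)$ is globally generated, and the counit $p^*p_*\to \id$ is an isomorphism off $\Exc(p)$, so the base locus of $|mp^*L-A|$ is contained in $\Exc(p)$; choosing $k$ with $kA-q^*M$ ample and writing $km\,p^*L-q^*M=k(mp^*L-A)+(kA-q^*M)$, the stable base locus of $km\,p^*L-q^*M$ lies in $\Exc(p)$, which is disjoint from $p^{-1}(U)$, and this is all the height machine needs for $h_{km\,p^*L-q^*M}\geq -O(1)$ on $p^{-1}(U)$. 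Finally, in the elementary alternative you sketch, note that a single tuple of homogeneous forms represents $\pi$ only away from its common zero locus, which may meet $U$; since $\pi$ is defined at every point of $U$, finitely many representing tuples cover $U$ (quasi-compactness), and one takes the maximum of their degrees, using that $h_L$ is bounded below to absorb constants. With these adjustments either route yields the lemma.
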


%
%

\medskip

As in \cite{Kawaguchi2016},
define
\[
\overline{\alpha}_f(x):=\sup A_f(x),\qquad\underline{\alpha}_f(x):=\inf A_f(x),
\]
and call them \emph{upper/lower arithmetic degree}.
By Lemma \ref{lemsingwilldef}, we have the following basic properties:
\begin{pro}\cite[Proposition 6.4]{Xie2023a}\label{probasicaf} We have:
	\begin{enumerate}
		\item $A_f(x)=A_f(f^{\ell}(x))$, for any $\ell\geq 0$.
		\item \label{eq:alpha_pow}
		$A_{f}(x)=\bigcup_{i=0}^{\ell-1}(A_{f^{\ell}}(f^i(x)))^{1/\ell}$.
		In particular, $\overline{\alpha}_{f^{\ell}}(x)=\overline{\alpha}_{f}(x)^{\ell}$, $\underline{\alpha}_{f^{\ell}}(x)=\underline{\alpha}_{f}(x)^{\ell}$.
	\end{enumerate}
\end{pro}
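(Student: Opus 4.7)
The plan is to exploit the single fact that, by applying Lemma \ref{lemsingwilldef} to the restriction of $f$ to $X\setminus I(f)$, one obtains a uniform Lipschitz-type estimate
$$h_L^+(f(y))\leq C\,h_L^+(y),$$
(after absorbing the additive constant of Lemma \ref{lemsingwilldef} into the multiplicative one using $h_L^+\geq 1$), valid for every $y\in X\setminus I(f)$. Since $x\in X_f$ the entire orbit avoids $I(f)$, so setting $b_n:=h_L^+(f^n(x))$ one has $b_{n+1}\leq Cb_n$ and in particular $\log b_n=O(n)$. This uniform growth bound is the single input that drives all the limit-set manipulations below.

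For part (1), the set $A_f(f^\ell(x))$ is by definition the limit set of $(b_{n+\ell}^{1/n})_{n\geq 0}$. Factoring
$$b_{n+\ell}^{1/n}=b_{n+\ell}^{1/(n+\ell)}\cdot b_{n+\ell}^{\ell/(n(n+\ell))}$$
and using $\log b_{n+\ell}=O(n+\ell)$ to send the second factor to $1$, one sees this limit set coincides with that of $(b_{n+\ell}^{1/(n+\ell)})_n$, which is just a tail of $(b_m^{1/m})_{m\geq 0}$ and hence has the same limit set $A_f(x)$.

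For part (2), I would split $(b_n^{1/n})_n$ into its $\ell$ subsequences indexed by residue $n\bmod\ell$. For a fixed $i\in\{0,\dots,\ell-1\}$ and $n=k\ell+i$, write $c_k:=b_n=h_L^+((f^\ell)^k(f^i(x)))$, so that
$$b_n^{1/n}=c_k^{1/(k\ell+i)}=\bigl(c_k^{1/k}\bigr)^{1/(\ell+i/k)}.$$
Since $\log c_k=O(k)$, the exponent $1/(\ell+i/k)$ may be freely replaced by $1/\ell$ without changing the limit set, so the subsequence for residue $i$ has limit set exactly $(A_{f^\ell}(f^i(x)))^{1/\ell}$; taking the union over $i$ gives (2).

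The step I expect to require a little care is the ``in particular'' statement, since (2) yields only the relation $\overline{\alpha}_f(x)^\ell=\max_{0\leq i\leq\ell-1}\overline{\alpha}_{f^\ell}(f^i(x))$, which is not automatically $\overline{\alpha}_{f^\ell}(x)$ -- it would, for instance, fail for an abstract bounded sequence without extra structure. Here the Lipschitz estimate must be used in both directions along the orbit: iterating it $i$ times gives $c_k\leq C^i b_{k\ell}$ and hence $\overline{\alpha}_{f^\ell}(f^i(x))\leq\overline{\alpha}_{f^\ell}(x)$, while iterating it $\ell-i$ further times gives $b_{(k+1)\ell}\leq C^{\ell-i}c_k$ and hence the reverse inequality. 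The same two-sided comparison with $\liminf$ in place of $\limsup$ settles the $\underline{\alpha}$ version.
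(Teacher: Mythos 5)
Your proof is correct, and it follows exactly the route the paper indicates: the paper gives no in-text argument but cites \cite[Proposition 6.4]{Xie2023a} and notes the properties follow from Lemma~\ref{lemsingwilldef}, which is precisely the height comparison you use as your sole input. Your handling of the ``in particular'' part, proving $\overline{\alpha}_{f^{\ell}}(f^i(x))=\overline{\alpha}_{f^{\ell}}(x)$ (and likewise for $\underline{\alpha}$) by the two-sided Lipschitz comparison along the orbit, correctly closes the one genuine gap a naive reading of (2) would leave.
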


The following result is the Kawaguchi-Silverman-Matsuzawa's upper bound.
See  \cite{Kawaguchi2016,Matsuzawa2020a,Jia2021,Xie2023,Song2023, Xie2023a} for its proof.
\begin{thm}\label{thmuniKSMineq}
Let $h$ be any Weil height on $X$ associated to some ample line bundle.
Then for any $x\in X_f(\bk)$, we have $$\overline{\alpha}_f(x)\leq \la_1(f).$$
\end{thm}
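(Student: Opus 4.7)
The plan is to establish, for each integer $m\geq 1$, a single-iterate height inequality
\[
h_L^+(f^m(x))\leq N_m\,h_L^+(x)+C_m,\qquad x\in X_f(\overline{\Q}),
\]
where $N_m\in\Z_{>0}$ satisfies $N_m^{1/m}\to\la_1(f)$ as $m\to\infty$ and $C_m$ is a constant depending on $m$ but not on $x$. Granting this, iterating the bound applied to $f^m$ itself gives
\[
h_L^+(f^{mn}(x))\leq N_m^n\,h_L^+(x)+(N_m^{n-1}+\cdots+1)\,C_m\leq N_m^n\bigl(h_L^+(x)+C_m'\bigr),
\]
so $\limsup_{n\to\infty}h_L^+(f^{mn}(x))^{1/(mn)}\leq N_m^{1/m}$. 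Using Proposition~\ref{probasicaf}(2) to relate $\overline{\alpha}_f(x)$ and $\overline{\alpha}_{f^m}(x)=\overline{\alpha}_f(x)^m$, and letting $m\to\infty$, this yields $\overline{\alpha}_f(x)\leq \la_1(f)$.

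To prove the single-iterate inequality, fix $m\geq 1$ and, using that $\deg_{1,L}(f^m)^{1/m}\to\la_1(f)$ by the definition of the dynamical degree, choose an integer $N_m>d\deg_{1,L}(f^m)/(L^d)$ with $N_m^{1/m}\to \la_1(f)$. Let $\Gamma_m\subset X\times X$ be the graph of $f^m$, with projections $p_1,p_2\colon \Gamma_m\to X$; then $p_1^*L$ is big and nef and $p_2^*L$ is nef on $\Gamma_m$. Siu's inequality (Theorem~\ref{thmsiuincodimo}) applied on $\Gamma_m$ yields $p_2^*L\leq \frac{d\deg_{1,L}(f^m)}{(L^d)}p_1^*L$ as nef classes, and hence $N_m\,p_1^*L-p_2^*L$ is big on $\Gamma_m$. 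For $k$ large, a non-zero section of $k(N_m\,p_1^*L-p_2^*L)$ furnishes, via the standard lower bound on heights of effective divisors outside their support, the comparison $h_L(f^m(x))\leq N_m\,h_L(x)+C_m$ for every $x$ outside a proper closed $B_m\subset X$. To extend this bound to \emph{every} $x\in X_f$, one passes to a large multiple of $L$ so that $L$ becomes very ample, embeds $X\hookrightarrow \P^N$, represents $f^m$ in homogeneous coordinates $[F_0:\cdots:F_N]$, and applies the universal height inequality for polynomial maps on projective space (see \cite{Hindry2000,Bombieri2006}), which is valid everywhere $f^m$ is defined.

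The main obstacle is that a naive iteration of a single-step inequality for $f$ itself gives only $\overline{\alpha}_f(x)\leq \deg_{1,L}(f)$, typically far larger than $\la_1(f)$. The sharper bound $\la_1(f)=\lim \deg_{1,L}(f^m)^{1/m}$ can only be recovered by applying the single-iterate inequality directly to $f^m$ with $m$ large, which forces the constants $N_m,C_m$ to depend on $m$; the whole argument then hinges on $N_m^{1/m}\to\la_1(f)$ while $C_m^{1/(mn)}\to 1$ as $n\to\infty$, so that $C_m$ is absorbed in the exponential growth rate. A secondary technical point is that the big-class construction only yields the height comparison outside a base locus $B_m$; this is bypassed by invoking the universal polynomial-map height inequality on $\P^N$, which holds pointwise wherever $f^m$ is defined.
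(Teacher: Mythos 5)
Your overall reduction (prove, for each fixed $m$, a single-iterate bound $h_L^+(f^m(x))\leq N_m h_L^+(x)+C_m$ valid at \emph{every} point of $X_f(\overline{\Q})$ with $N_m^{1/m}\to\la_1(f)$, then iterate along the $f^m$-orbit and use $\overline{\alpha}_{f^m}(x)=\overline{\alpha}_f(x)^m$) is the right skeleton, and the Siu step on the graph $\Gamma_m$ is fine as far as it goes. But the step you use to upgrade the inequality from ``outside a proper closed subset $B_m$'' to ``everywhere on $X_f$'' has a genuine gap. The universal triangle-inequality bound $h(\phi(x))\leq(\deg\phi)h(x)+O(1)$ for maps given by forms with no common factor is a statement about self-maps of $\P^N$, where the coordinate ring is a UFD and removing the common factor shrinks the common zero locus to $I(\phi)$, of codimension $\geq 2$. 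For $f^m$ on a subvariety $X\subsetneq\P^N$ (embedded by $L$) this breaks in two ways. First, a representation $[F_0:\cdots:F_N]$ of $f^m$ by forms of one degree is in general valid only off the common zero locus of the $F_i$ on $X$, and on $X$ one cannot remove common factors: that locus may contain divisors of $X$ not contained in $I(f^m)$, so the inequality is again only valid off a proper closed subset, which the orbit may hit infinitely often -- this is exactly the base-locus difficulty you were trying to avoid, and it is the same difficulty the paper has to fight (with the weak dynamical Mordell--Lang theorem) in its proof of Theorem \ref{KSCsurfacelaonela}. Second, even where such a representation is valid, its degree $\delta_m$ is not shown to satisfy $\delta_m^{1/m}\to\la_1(f)$: one only gets $\deg_{1,L}(f^m)\lesssim\delta_m$, i.e. the inequality in the wrong direction, and the minimal degree of forms representing $f^m$ on $X$ can a priori exceed $\deg_{1,L}(f^m)$ by an unbounded factor coming from fixed components of the restricted linear system that cannot be cancelled (when $\Pic(X)\neq\Z$, say). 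Establishing a bound of the shape $h_L^+(f^m(x))\leq C\deg_{1,L}(f^m)h_L^+(x)$ on all of $X\setminus I(f^m)$, with $C$ independent of $m$, is precisely the hard content of Matsuzawa's uniform height inequality; it is not a consequence of the $\P^N$ triangle inequality.

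For comparison: the paper does not prove Theorem \ref{thmuniKSMineq} at all -- it is quoted as the known Kawaguchi--Silverman--Matsuzawa upper bound, with proofs in the cited references. Your argument, once the gap above is acknowledged, is essentially the classical Kawaguchi--Silverman argument and is complete in the special case $X=\P^N$ (there $\delta_m=\deg_{1,\sO(1)}(f^m)$ and the representation is valid on all of $X\setminus I(f^m)$), but the theorem is needed in the paper for arbitrary projective varieties (e.g.\ arbitrary projective surfaces in Section \ref{sectionksc}), and in that generality your proof does not go through without importing the substantial input of the cited works.
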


If $\overline{\alpha}_f(x)=\underline{\alpha}_f(x)$,
we set
\[
\alpha_f(x):=\overline{\alpha}_f(x)=\underline{\alpha}_f(x).
\]
In this case, we say that $\alpha_f(x)$ is well-defined and call it the \textit{arithmetic degree} of $f$ at $x$.

\subsection{Kawaguchi-Silverman conjecture}
The following conjecture was proposed by Kawaguchi and Silverman \cite{Silverman2014,Kawaguchi2016}.  
\begin{con}[Kawaguchi-Silverman conjecture]\label{ksc}
Let $X$ be a projective variety over $\overline{\Q}$.
Let $f: X\dashrightarrow X$ be a dominant rational map. Then for every  $x\in X_f(\overline{\Q})$, $\alpha_f(x)$ is well defined.
Moreover, if $O_f(x)$ is Zariski dense, then we have $$\alpha_f(x)= \la_1(f).$$
\end{con}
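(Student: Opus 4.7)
The plan decomposes the conjecture into an upper and a lower bound on $A_f(x) \subseteq [1,\la_1(f)]$. The upper bound $\overline{\alpha}_f(x) \leq \la_1(f)$ is Theorem \ref{thmuniKSMineq}, so no new work is needed there, and the task reduces to proving $\underline{\alpha}_f(x) \geq \la_1(f)$ whenever $O_f(x)$ is Zariski dense (this also forces the limit $\alpha_f(x)$ to exist). My strategy is to lift the surface argument behind Theorem \ref{KSCsurfacelaonelain} to arbitrary dimension, using the line-bundle recursive inequality from Theorem \ref{thmmaincompudegone} as the replacement for the surface-specific input.

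The first step is to set up the conditional height inequality. Fix $\epsilon \in (0,1)$ and, for $m \geq m_\epsilon$ given by Theorem \ref{thmmaincompudegone} applied with $i=1$, the class $M_m := L_{2m} + \mu_1^m \mu_{U(1)+1}^m L - \epsilon^m \mu_1^m L_m$ is big on some birational model. Write $M_m$ as ample minus effective, let $\bB_m$ denote the support of the effective part, and apply Weil's height machine. Pulling back by $(f^n)^*$ and using height functoriality yields for every $n \geq 0$ with $f^{n+m}(x) \notin \bB_m$,
\begin{equation*}
h_L(f^{n+2m}(x)) + \mu_1^m \mu_{U(1)+1}^m\, h_L(f^n(x)) \geq \epsilon^m \mu_1^m\, h_L(f^{n+m}(x)) - C_m.
\end{equation*}
This is the higher-dimensional analogue of the Matsuzawa--Wang conditional inequality.

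The second step is to handle the "bad" set $B_m := \{n \geq 0 : f^{n+m}(x) \in \bB_m\}$. Since $O_f(x)$ is Zariski dense and $\bB_m$ is a proper closed subset, Weak Dynamical Mordell--Lang (in the forms cited in Section \ref{subintksc}) implies that $B_m$ has density zero in $\Z_{\geq 0}$. Pairing this with an unconditional weaker recursive inequality---obtained by replacing $M_m$ by the everywhere-pseudo-effective coarsening $L_{2m} + C_m L$ arising from Siu's inequality (Theorem \ref{thmsiuincodimo})---one can run Lemma \ref{lemrecuineq} in a density-averaged fashion: the good indices $n \notin B_m$ propagate height growth at rate $\epsilon \mu_1 + O(1/m)$, while the density-zero excursions into $\bB_m$ contribute only a subexponential multiplicative correction. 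Letting $m \to \infty$ and then $\epsilon \to 1$ forces $\underline{\alpha}_f(x) \geq \la_1(f)$, and combined with Theorem \ref{thmuniKSMineq} gives $A_f(x) = \{\la_1(f)\}$.

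The genuine obstacle is precisely this density-averaged iteration: converting the density-zero hypothesis on $B_m$ into a provably negligible multiplicative loss in the height growth. In the surface case of Theorem \ref{KSCsurfacelaonelain}, success comes from the fact that $\bB_m$ is at most a curve, $f|_{\bB_m}$ has dynamical degree bounded by $\la_2(f)$, and heights on $\bB_m$ can be estimated directly. In arbitrary dimension $\bB_m$ can carry intermediate-dimensional strata along which $f$ restricts to a dominant rational self-map whose own first dynamical degree is arbitrarily close to $\la_1(f)$, so pathological long detours through $\bB_m$---compatible with density zero---could in principle erode the lower bound. Closing this gap likely requires either a stratified recursive inequality tracking the dynamics on every component of $\bB_m$ simultaneously, or the full Dynamical Mordell--Lang conjecture, which would eliminate $B_m$ outright after finitely many iterates and immediately reduce the problem to the unconditional inequality above.
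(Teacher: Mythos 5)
You are attempting to prove a statement that the paper itself records only as a conjecture (Conjecture \ref{ksc}); the paper offers no proof of it, and proves only the special case of Theorem \ref{KSCsurfacelaonela} (projective surfaces with $\la_1(f)>\la_2(f)$ or $\la_2(f)=\la_1(f)^2$). Your proposal correctly reproduces the architecture of that surface proof --- the upper bound from Theorem \ref{thmuniKSMineq}, the conditional height recursion coming from the bigness of $M_m=L_{2m}+\mu_i^m\mu_{i+1}^mL-\epsilon^m\mu_i^mL_m$ (Theorem \ref{thmmaincompudegone}), and the Weak Dynamical Mordell--Lang bound on the density of visits to $\bB_X(M_m)$ --- but it does not close the argument in general dimension, and you say so yourself. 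That admission is accurate: the gap you name is genuine, and the conjecture remains open.

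To be concrete about where your sketch diverges from what actually works in the surface case: the paper does \emph{not} handle the excursions into the base locus by a ``weaker recursive inequality'' of the form $L_{2m}+C_mL$ combined with a density-averaged version of Lemma \ref{lemrecuineq}. Instead it exploits that in dimension two the relevant components of $\bB_X(M_m)$ met by a Zariski dense orbit are finitely many non-preperiodic curves $C_j$ with $\dim f^n(C_j)=1$; Lemma \ref{lemcurvedegregrowth} (via the same recursive inequality applied to $(L_n\cdot C)$) shows $(L_{N_jm}\cdot f(C_j))$ eventually exceeds $3D^3(L\cdot f(C_j))$, which converts into the pointwise height gain (\ref{equationoncjhin}) after a \emph{bounded} number $N_j$ of steps whenever the orbit lands on $C_j$. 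It is this uniform bounded-time recovery, not density-zero alone, that lets the bookkeeping with $p(n)$, $r(n)$ and the type-$0$/type-$1$ blocks go through. In dimension $\geq 3$ the base locus can contain divisorial or intermediate-dimensional components on which $f$ restricts to a dominant self-map with first dynamical degree close to $\la_1(f)$, and no analogue of the bounded recovery time is available; your suggested fixes (a stratified recursive inequality, or assuming full Dynamical Mordell--Lang) are reasonable directions but are not carried out here. As written, the proposal establishes nothing beyond what Theorem \ref{KSCsurfacelaonela} already gives.
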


\subsection{Our result}
The following is the main result of this section.
\begin{thm}[=Theorem \ref{KSCsurfacelaonelain}]\label{KSCsurfacelaonela}Let $X$ be a projective surface over $\overline{\Q}$ and $f: X\dashrightarrow X$ be a dominant rational self-map such that $\la_1(f)>\la_2(f)$ or $\la_2(f)=\la_1(f)^2$. Let $x\in X_{f}(\overline{\Q})$. If the orbit $O_f(x)$ of $x$ is Zariski dense, then $\alpha_f(x)=\la_1(f).$
\end{thm}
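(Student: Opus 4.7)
The case $\la_2(f)=\la_1(f)^2$ is already covered by Wang and Matsuzawa \cite[Theorem 1.17]{Matsuzawa}, so the plan is to handle the remaining hypothesis $\la_1(f)>\la_2(f)$, under which $f$ is $1$-cohomologically hyperbolic on the surface $X$. The upper bound $\overline{\alpha}_f(x)\le\la_1(f)$ is given by Theorem \ref{thmuniKSMineq}, so the entire task is to prove the matching lower bound $\underline{\alpha}_f(x)\ge\la_1(f)$. Set $a_n:=h_L^+(f^n(x))$ for an ample $L$ on $X$; it is enough to prove $\liminf_{n\to\infty}a_n^{1/n}\ge\la_1(f)$.

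The key input is the bigness statement from Theorem \ref{thmmaincompudegone} (equivalently Remark \ref{remimprorecu}), specialized to $d=2$, $i=1$: for every $\epsilon\in(0,1)$ and every $m\ge m_\epsilon$, the class
\[D_{m,\epsilon}:=L_{2m}+\la_2(f)^m L-\epsilon^m\la_1(f)^m L_m\]
is big in $\widetilde{\Pic}(X)_\R$. Passing to a birational model on which $D_{m,\epsilon}$ is represented by an honest big $\R$-divisor and applying the standard height machine, one obtains a constant $C_{m,\epsilon}>0$ and a proper Zariski closed subset $B_{m,\epsilon}\subsetneq X$ (the image of the stable base locus) for which the \emph{strong} recursive inequality
\[a_{n+2m}+\la_2(f)^m a_n+C_{m,\epsilon}\ge\epsilon^m\la_1(f)^m a_{n+m}\]
holds at every $n$ with $f^n(x)\notin B_{m,\epsilon}$. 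At the remaining indices only the one-sided bound $a_{n+k}\le C^k a_n+O(C^k)$, coming from a fixed algebraic-degree bound on $f$, is available; this is the ``weaker'' inequality referred to in the introduction.

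To deal with the bad indices, I will invoke the weak dynamical Mordell--Lang theorem \cite[Corollary 1.5]{Bell2015}: since the $f$-orbit of $x$ is Zariski dense and $B_{m,\epsilon}\subsetneq X$, the set $E_{m,\epsilon}:=\{n\ge 0\,:\,f^n(x)\in B_{m,\epsilon}\}$ has upper natural density $0$. Consider the subsequence $b_k:=a_{km}$. On the density-$1$ set of $k$ with $km\notin E_{m,\epsilon}$ the strong inequality $b_{k+2}+\la_2(f)^m b_k\ge\epsilon^m\la_1(f)^m b_{k+1}-C_{m,\epsilon}$ holds, while on the density-$0$ complement the sequence drops by at most a bounded multiplicative factor per step. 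A runs-of-good-indices amortization, modeled on Lemma \ref{lemrecuineq}, then yields
\[\liminf_{k\to\infty}b_k^{1/k}\ge\alpha_{m,\epsilon},\]
where $\alpha_{m,\epsilon}$ is the larger root of $t^2-\epsilon^m\la_1(f)^m t+\la_2(f)^m=0$. Since $\la_2(f)<\la_1(f)$, we have $\alpha_{m,\epsilon}^{1/m}\to\epsilon\la_1(f)$ as $m\to\infty$, and letting $\epsilon\to 1$ gives $\underline{\alpha}_f(x)\ge\la_1(f)$.

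The main obstacle will be the last step. Without the full dynamical Mordell--Lang conjecture one cannot assume that $E_{m,\epsilon}$ is finite, so bad indices may recur arbitrarily far out along the orbit. The technical heart of the proof is therefore the density-$0$ amortization, which must exploit that the total multiplicative loss incurred at bad indices within $[0,N]$ is $C^{|E_{m,\epsilon}\cap[0,N]|}=C^{o(N)}$, hence negligible on the exponential scale, while along each sufficiently long run of good indices the strong inequality forces growth at rate $\alpha_{m,\epsilon}$. This is precisely the argument that lets us replace the unconditional dynamical Mordell--Lang conjecture used by Matsuzawa--Wang in \cite{Matsuzawa,Wang2022} by the known weak form of \cite{Bell2015}.
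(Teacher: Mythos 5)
Your overall skeleton matches the paper's strategy for the main case $\la_1(f)>\la_2(f)$: upper bound from Theorem \ref{thmuniKSMineq}, the big class $L_{2m}+\la_2^mL-\epsilon^m\la_1^mL_m$ from Theorem \ref{thmmaincompudegone}, the height inequality off the stable base locus, and the weak dynamical Mordell--Lang theorem \cite[Corollary 1.5]{Bell2015} to make the bad times density zero (delegating $\la_2=\la_1^2$ to Matsuzawa--Wang is consistent with the paper, which also handles it uniformly by taking $i=2$). However, there is a genuine gap at the heart of your amortization: you assert that at a bad index ``the sequence drops by at most a bounded multiplicative factor per step,'' justified by a degree bound on $f$. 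A degree bound only gives the one-sided estimate $h(f(y))\leq Ch(y)+C'$ (Lemma \ref{lemsingwilldef}); there is no lower bound of the form $h(f(y))\geq c\,h(y)-C'$ for a rational map, and indeed the height can collapse to $O(1)$ exactly when the orbit lands on the curves of $\bB_X(M_m)$ (e.g.\ curves contracted by iterates, or curves on which the strong inequality is unavailable). Without such a lower bound, density zero of the bad set is not enough: if the height were to reset to a bounded value at bad times $n_k=k^2$, say, then $h_{n_k+1}^{1/(n_k+1)}\to 1$ and the liminf is $1$, even though the bad set has density $0$. A second, related omission is the re-seeding of the recursion: Lemma \ref{lemrecuineq} needs the initial condition $a_1>\beta a_0$ at the start of each good run, and your proposal gives no mechanism to recover it after a bad stretch.

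This is precisely where the paper's genuinely new ingredient enters, which your proposal lacks: a \emph{weaker recursive inequality on the base locus itself}. Using cohomological hyperbolicity, Lemma \ref{lemcurvedegregrowth} and Lemma \ref{lemcurvedegreenotbounded} show that for each non-preperiodic component curve $C_j$ of $\bB_X(M_m)$ whose forward images stay curves, the degrees $((f^{nm})^*L\cdot f(C_j))$ are unbounded; hence there is $N_j$ with a height inequality $h(f^{N_jm}(y))>3D^3h(y)-C'$ for $y\in f(C_j)$. Consequently, whenever $f^{mn}(x)$ hits a bad curve, within a bounded number $N$ of $m$-steps the height strictly increases ($h_{n+t_n+1}>h_{n+t_n}>h_n$), which simultaneously rules out height collapse across bad stretches and restores the seed condition for Lemma \ref{lemrecuineq} on the next good run; the preperiodic, contracted and zero-dimensional components are eliminated beforehand using Zariski density of the orbit. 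The paper then runs the bookkeeping through the reindexing $p(n)$ (jumps of size at most $N$ over bad times) combined with weak dynamical Mordell--Lang. To repair your argument you would need to supply exactly this curve-degree-growth input (or some substitute lower bound at bad times); the density-zero amortization alone does not close the proof.
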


In particular, Theorem \ref{KSCsurfacelaonela} implies the Kawaguchi-Silverman conjecture for birational self-maps on projective surfaces.

\proof
Let $L$ be an ample line bundle on $X$. 
To simplify the notations, we write $$\la_j:=\la_j(f), \mu_j:=\mu_j(f), \text{ and } L_n:=(f^n)^*L$$ for every $n\geq 0.$
Let $i:=1$ if $\la_1>\la_2$ and $i:=2$ if $\la_1^2=\la_2.$
Then we have $\mu_i=\la_1>1$ and $\mu_{i+1}<1.$
If $\la_1=1$, Theorem \ref{KSCsurfacelaonela} trivially holds. So we assume that $\la_1>1.$

We denote by $h: X(\bk)\to \R$ a Weil height associated to $L$. It is unique up to adding a bounded function. We may assume that $h(y)\geq 1$ for every $y\in X(\overline{\Q})$.

Let $x$ be a point in $X_f(\overline{\Q})$ whose orbit is Zariski dense. By Theorem \ref{thmuniKSMineq}, $\overline{\alpha}_f(x)\leq \la_1.$ We only need to show that for every $\beta\in (0,\la_1)$, $\underline{\alpha}_f(x)\geq  \beta.$
Pick $\epsilon\in (0,1)$ such that 
\begin{equation}\label{equationconappr}
\epsilon^{2}\mu_{i}>\beta \text{ and } \epsilon^{-2}\mu_{i+1}<1.
\end{equation}
There is $m_0\geq 1$ such that  for every $m\geq m_0$, we have 
\begin{equation}\label{equationsumrigcon}(\epsilon^{2}\mu_{i})^m+(\epsilon^{-2}\mu_{i+1})^m<\epsilon^m\mu_{i}^m-1.
\end{equation}
Set $$\beta_1:=(\epsilon^{2}\mu_{i})^m \text{ and } \beta_2:=(\epsilon^{-2}\mu_{i+1})^m.$$
By Theorem \ref{thmmaincompudegone}, there is $m\geq m_0$ such that 
$$M_m:=L_{2m}+\mu_i^m\mu_{i+1}^mL- \epsilon^m\mu_{i}^mL_m$$
is big.
There is a constant $C>0$ such that for every $y\in X_f(\bk)\setminus \bB_{X}(M_m)$, we have 
$$h(f^{2m}(y))+\mu_i^m\mu_{i+1}^mh(y)\geq \epsilon^m\mu_{i}^mh(f^m(y))-C.$$
By the Northcott property, after replacing $x$ by some $f^l(x)$ for some $l\geq 0$, we may assume that $h(f^n(x))\geq C$ for every $n\geq 0$.
Set $h_n:=h(f^{mn}(x)).$ 
We only need to show that 
\begin{equation}\label{equliminfhnbe}\liminf_{n\to \infty} h_n^{1/n}\geq \beta^m.
\end{equation}
By Lemma \ref{lemsingwilldef}, there is a constant $D>1$ such that for every $n\geq 0$,
\begin{equation}\label{equtrivialbound}
h_{n+1}\leq Dh_n.
\end{equation}
If $f^{mn}(x)\not\in \bB_{X}(M_m)$, we have 
\begin{equation}\label{equareingenr}h_{n+2}+\mu_i^m\mu_{i+1}^mh_n\geq \epsilon^m\mu_{i}^mh_{n+1}-C\geq (\epsilon^m\mu_{i}^m-1)h_{n+1}.
\end{equation}
Hence we have 
\begin{equation}\label{equareingenrsimple}h_{n+2}-\beta_2h_{n+1}\geq \beta_1(h_{n+1}-\beta_2h_{n}).
\end{equation}

Let $B_0$ be the union of irreducible components of $\bB_{X}(M_m)$ of dimension $0$ and $B_1$ be the union of irreducible components of $\bB_{X}(M_m)$ of dimension $1$.
We have $\bB_{X}(M_m)=B_0\sqcup B_1.$
After replacing $x$ by some $f^l(x)$ for some $l\geq 0$, we may assume that $f^n(x)\not\in B_0$ for every $n\geq 0.$

Let $C_j, j=1,\dots, s'$ be the irreducible components of $B_1$ such that $C_j\cap X_f\neq\emptyset.$ 
As $f^n(x)\in X_{f}(\overline{\Q})$ for every $n\geq 0$, 
$f^n(x)\in \bB_{X}(M_m)$ if and only if  $f^n(x)\in C_j$ for some $j=1,\dots,s'.$
We may assume that $C_j, j=1,\dots,s''$ are exactly the $C_j$ such that for every $n\geq 0$, $\dim(f^n(C_j))=1.$
After replacing $x$ by some $f^l(x)$ for some $l\geq 0$, we may assume that $f^n(x)\not\in \cup_{j=s''+1}^{s'} C_j$ for every $n\geq 0.$
We may assume that $C_j, j=1,\dots,s$ are exactly the $C_j, j=1,\dots, s''$ which are not preperiodic.
As $O_f(x)$ is Zariski dense, 
$f^n(x)\in \bB_{X}(M_m)$ if and only if  $f^n(x)\in C_j$ for some $j=1,\dots,s.$

\begin{lem}\label{lemcurvedegreenotbounded}Let $C$ be an irreducible curve in $X$ with $C\cap X_f\neq \emptyset.$ Assume that $\dim f^n(C)=1$ for every $n\geq 0$ and 
$C$ is not preperiodic. Recall that the above intersections $(L_n\cdot C)$ are well-defined as in the last paragraph of Section \ref{subseclinebun}.
Then the sequence $(L_n\cdot C), n\geq 0$ is not bounded. 
\end{lem}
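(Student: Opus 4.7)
The plan is to argue by contradiction: assume $(L_n \cdot C) \leq M$ for all $n$, and use the exponential growth statement Lemma \ref{lemcurvedegregrowth} applied to a suitable forward iterate of $C$. Recall from the surrounding proof of Theorem \ref{KSCsurfacelaonela} that $f$ is cohomologically hyperbolic ($1$-cohomologically hyperbolic when $\la_1 > \la_2$, and $2$-cohomologically hyperbolic when $\la_1^2 = \la_2$), so Lemma \ref{lemcurvedegregrowth} produces, for any $\beta \in (1, \mu_i)$, an open set $U = X \setminus \bB_X(M_m)$, with $M_m$ the big divisor coming from Theorem \ref{thmmaincompudegone}, such that any irreducible curve meeting $U_f$ whose iterates remain one-dimensional has $(L_n \cdot \cdot)^{1/n}$ with $\liminf \geq \beta$.

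The first step is to observe that $\{f^n(C) : n \geq 0\}$ is an infinite set of distinct irreducible curves. Indeed, each $f^n(C)$ is irreducible of dimension $1$; if $f^a(C) = f^b(C)$ for some $a < b$, then $f^a(C)$ is periodic under $f^{b-a}$ as a cycle, so $\{f^n(C)\}$ is finite, contradicting that $C$ is not preperiodic. Since $M_m$ is big on the surface $X$, the stable base locus $\bB_X(M_m)$ is at most one-dimensional, consisting of finitely many irreducible curves $D_1, \dots, D_k$ together with finitely many isolated points. Because $\{f^n(C)\}$ is infinite and $\{D_1, \dots, D_k\}$ is finite, pigeonhole produces $N \geq 0$ such that $f^n(C) \neq D_i$ for all $n \geq N$ and all $i$. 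Setting $C' := f^N(C)$, we then have, for every $n \geq 0$, that the irreducible curve $f^n(C') = f^{n+N}(C)$ is not contained in $\bB_X(M_m)$.

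This last property forces the scheme-theoretic generic point $\eta_{C'}$ of $C'$ to lie in $U_f$: first, $\eta_{C'} \in X_f$ because $C'$ is never contracted by any $f^n$, so $f^n$ is defined at $\eta_{C'}$ and maps it to the generic point of the curve $f^n(C')$, which is not an indeterminacy point; second, $f^n(\eta_{C'})$ lies in $U$ precisely because $f^n(C') \not\subseteq \bB_X(M_m)$. Hence $C' \cap U_f \neq \emptyset$, and Lemma \ref{lemcurvedegregrowth} applied to $C'$ gives $\liminf_{n \to \infty} (L_n \cdot C')^{1/n} \geq \beta > 1$. Combined with the projection formula identity $(L_{n+N} \cdot C) = ((f^N)^* L_n \cdot C) = \deg(f^N|_C) \cdot (L_n \cdot C')$, this yields $(L_{n+N} \cdot C) \to \infty$, contradicting our assumption $(L_n \cdot C) \leq M$. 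The main conceptual step is the pigeonhole reduction to an iterate $C'$ whose generic point remains in $U$ forever; the remainder is a straightforward application of the projection formula together with Lemma \ref{lemcurvedegregrowth}.
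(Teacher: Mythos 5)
Your proof is correct and takes essentially the same route as the paper's: pass to a forward iterate $C'=f^N(C)$ all of whose images meet the open set $U$ from Lemma \ref{lemcurvedegregrowth} (possible since $C$ is not preperiodic, so the curves $f^n(C)$ are pairwise distinct while $X\setminus U$ contains only finitely many irreducible curves), observe that the generic point of $C'$ then lies in $U_f$, and apply that lemma. The only difference is that you make explicit the projection-formula bookkeeping $(L_{n+N}\cdot C)=\deg(f^N|_C)\,(L_n\cdot C')$, which the paper leaves implicit in ``replacing $C$ by $f^l(C)$''.
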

Applying Lemma \ref{lemcurvedegreenotbounded} for $f^m$ and  $f(C_j), j=1,\dots,s$, for every $j=1,\dots,s$, there is $N_j\geq 1$ such that $(L_{N_jm}\cdot f(C_j))>3D^3(L\cdot f(C_j)).$
As $f(C_j)$ is one dimensional, there is $C'>0$ such that for every $j=1,\dots,s$,  for every $y\in X_f(\bk)\cap f(C_j)$
we have $h(f^{N_jm}(y))>3D^3h(y)-C'.$
By the Northcott property, after replacing $x$ by $f^l(x)$ for some $l\geq 0$, we may assume that $h(f^n(x))> C'$ for every $n\geq 0.$
Moreover, we may assume that $h(f^m(x))>h(x).$
If $f^{mn}(x)\in f(C_j), j=1,\dots, s$, we have 
\begin{equation}\label{equationoncjhin}h(f^{(N_j+n)m}(x))>3D^3h(f^{mn}(x))-C'>2D^3h(f^{mn}(x)).
\end{equation}
By (\ref{equtrivialbound}), there is $t_n\in \{0,\dots, N_j-1\}$ such that 
$$h(f^{(t_n+1+n)m}(x))>h(f^{(t_n+n)m}(x))>h(f^{nm}(x)).$$
Set $N:=\max\{N_j|\,\, j=1,\dots,s\}.$ The above discussion shows the following:
 If $f^{m(n-1)}(x)\in \bB_{X}(M_m)$, there is $t_n\in \{0,\dots, N\}$, such that 
 \begin{equation}\label{equaineqinbm}
 h_{n+t_n+1}>h_{n+t_n}>h_n.
 \end{equation}
Set $W:=\{n\geq 1|\,\, f^{m(n-1)}(x)\in \bB_{X}(M_m)\}.$ By the Weak dynamical Mordell-Lang \cite[Corollary 1.5]{Bell2015} (see also \cite[Theorem 2.5.8]{Favre2000a}, \cite[Theorem D, Theorem E]{Gignac2014},\cite[Theorem 2]{Petsche2015}, \cite[Theorem 1.10]{Bell2020}, \cite[Theorem 1.17]{Xie2023} and \cite[Theorem 5.2]{Xie2023a}), we have 
\begin{equation}\label{equationwdml}\lim_{n\to \infty}\frac{w_n}{n}=0
\end{equation}
where $w_n:=\#(\{1,\dots, n\}\cap W)$.
We define a sequence $p(n)$ by induction.
Define $p(0)=0.$ Assume that $p(n)$ is defined for $n\leq n_1$. Define $p(n_1+1)=p(n_1)+1$ if $p(n_1)\not\in W$; otherwise if $n_1\not\in W$, define $p(n_1+1)=p(n_1)+t_{p(n_1)}+1.$
It is clear that $p(n)$ is strictly increasing. 
As $t_{p(n')}\leq N$ for every $n'\geq 0$, for every $n\geq 0$, we have
\begin{equation}\label{equrnncompp}p(n)\geq n \text{ and } p(n+1)\leq p(n)+N.
\end{equation}
For every $n\geq 0$, there is a minimal $r(n)\geq 0$ such that $n\leq p(r(n)).$ It is clear that 
\begin{equation}\label{equrnncomp}r(n)\leq n \text{ and } p(r(n))\leq n+N.
\end{equation}
\begin{lem}\label{lemrnono}We have $$\lim_{n\to \infty}\frac{r(n)}{n}=1.$$
\end{lem}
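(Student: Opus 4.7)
The plan is to show that the gap $n - r(n)$ grows sublinearly in $n$, by exploiting the sublinear growth of $w_n$ provided by the Weak dynamical Mordell--Lang theorem (\ref{equationwdml}). Since $p$ is strictly increasing and $p(n) \geq n$, we have $r(n) \leq p(r(n))$, so an upper bound on $p(k) - k$ will translate directly into a lower bound on $r(n)$.

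The first step is to write $p(k)$ as a telescoping sum of increments: each step $j < k$ contributes $p(j+1) - p(j) = 1$ when $p(j) \notin W$, and $p(j+1) - p(j) = t_{p(j)} + 1 \leq N + 1$ when $p(j) \in W$. Summing the excess above $1$ gives
$$p(k) - k \leq N \cdot \#\{0 \leq j < k : p(j) \in W\}.$$
Because $p$ is strictly increasing, the values $\{p(j) : 0 \leq j < k\}$ are distinct positive integers bounded by $p(k-1) < p(k)$, so the count on the right is at most $w_{p(k)}$. Hence $p(k) - k \leq N \cdot w_{p(k)}$ for every $k \geq 0$.

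Taking $k = r(n)$ and combining with (\ref{equrnncomp}), which gives both $n \leq p(r(n))$ and $p(r(n)) \leq n + N$, I obtain
$$n - r(n) \leq p(r(n)) - r(n) \leq N \cdot w_{p(r(n))} \leq N \cdot w_{n+N}.$$
Dividing by $n$ and invoking (\ref{equationwdml}), which yields $w_{n+N}/n \to 0$ as $n \to \infty$ since $N$ is a fixed constant, I conclude that $r(n)/n \to 1$.

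The argument is essentially bookkeeping and I do not anticipate any genuine obstacle; the only substantive ingredient is the sublinearity of $w_n$ coming from the Weak dynamical Mordell--Lang estimate (\ref{equationwdml}), which has already been invoked. The mildly delicate point is controlling the number of ``$W$-steps'' among $p(0), \dots, p(r(n)-1)$ by $w_{p(r(n))}$, which relies on the injectivity of $p$ together with the crude bound $t_{p(j)} \leq N$.
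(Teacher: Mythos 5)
Your proof is correct and follows essentially the same route as the paper: telescope the increments of $p$, bound the number of ``$W$-steps'' among $p(0),\dots,p(r(n)-1)$ by $w_{p(r(n))}\leq w_{n+N}$ using (\ref{equrnncomp}), and conclude by the sublinearity of $w_n$ from (\ref{equationwdml}). The only cosmetic difference is the constant ($N$ versus the paper's $N-1$) and that you condition the step size on $p(j)\in W$, which matches the definition of $p$; this changes nothing in the argument.
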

We note that $r(p(n))=n$. By Lemma \ref{lemrnono}, we have 
\begin{equation}\label{equpnnone}
\lim_{n\to \infty}\frac{p(n)}{n}=1.
\end{equation}

\medskip

The definition of $p(n)$ shows that for $n\geq 1$, if $p(n-1)\in W$, then $h_{p(n)}> h_{p(n)-1}$ and $h_{p(n)}> h_{p(n-1)}$.
Moreover, we have $h_1>h_0.$
For a set $I$ of consecutive integers, we say that $I$ is of type 0 if $p(n)\not\in W$ for every $n\in I$ and 
say that $I$ is of type 1 if $p(n)\in W$ for every $n\in I.$
Let $n\geq 1$, write $\{1,\dots,n\}$ as $I_1\sqcup \cdots \sqcup I_s$ where $I_i$ are set of consecutive integers such that
\begin{points}
\item[$\d$] for $i=1,\dots, s-1,$ $\max\{I_{i}\}+1=\min\{I_{i+1}\}$;
\item[$\d$]for every $i=1,\dots, s$, $I_i$ are either of type 0 or of type 1;
\item[$\d$] for $i=1,\dots, s-1$, the type of $I_i$ and $I_{i+1}$ are different.
\end{points}
Write $I_{i}=\{a_i, a_i+1\dots, b_i\}.$  As $0\not\in W$, we have $a_1=1$ and $p(a_1)=1.$ 
If $i\leq s-1$, then $a_{i+1}=b_i+1$. If further that $I_i$ is of type 0, then $p(a_{i+1})=p(b_{i})+1.$

By (\ref{equareingenrsimple}),
 if $I_i$ is of type 0, then for every $j\in p(a_i),\dots, p(b_i)+1$, we have
$$h_{j}\geq \beta_1^{j-a_i}(h_{p(a_i)}-\beta_2h_{p(a_i)-1}).$$
If $i=1$, we have $h_{p(a_i)}=h_1>h_0=h_{p(a_i)-1}.$
If $i\geq 2$, we have $a_i-1=b_{i-1}$ and $I_{i-1}$ is of type 1. As $p(a_i-1)\in W$, we have 
$h_{p(a_i)}>h_{p(a_i)-1}.$ Hence we have 
\begin{equation}\label{equtypzero}
h_{p(b_i)+1}\geq \beta_1^{b_i+1-a_i}(1-\beta_2)h_{p(a_i)}=\beta_1^{\#I_i}(1-\beta_2)h_{p(a_i)}
\end{equation}
and 
\begin{equation}\label{equtypzerolast}
h_{p(b_i)}\geq \beta_1^{b_i-a_i}(1-\beta_2)h_{p(a_i)}=\beta_1^{\#I_i-1}(1-\beta_2)h_{p(a_i)}
\end{equation}
When $i\leq s-1$, $p(b_i)+1=p(a_{i+1}).$

 If $I_i$ is of type 1,  then for every $n\in I_i$, we have $h_{p(n+1)}>h_{p(n)}.$
 So we have 
\begin{equation}\label{equtypone}
h_{p(b_i+1)}\geq h_{p(b_i+1)}\geq h_{p(a_i)}
\end{equation}
When $i\leq s-1$, $p(b_i+1)=p(a_{i+1}).$

Set $O_n:=\{i=1,\dots,s|\,\, I_i \text{ is of type } 0\}.$ Set $e:=0$ if $s\not\in O$ and $e=1$ if $s\in O_n.$
Set $l_n:=\sum_{i\in O_n}\#I_i.$
Combining (\ref{equtypzero}), (\ref{equtypzerolast}) and (\ref{equtypone}), we get 
\begin{equation}\label{equhnlowerb}h_{p(n)}\geq \beta_1^{(\sum_{i\in O_n}\#I_i)-e}(1-\beta_2)^{\#O_n}h_1\geq \beta_1^{l_n-1}(1-\beta_2)^{\#O_n}h_1 .
\end{equation}
Since
$$\#O_n\leq n-l_n=\sum_{i\in \{1,\dots, s\}\setminus O_n}\#I_i\leq w_{p(n)},$$
by (\ref{equationwdml}) and (\ref{equpnnone}), we get that
\begin{equation}\label{equlnnone}
\lim_{n\to \infty}\frac{l_n}{n}=1 \text{ and } \frac{\#O_n}{n}=0.
\end{equation}
Then by (\ref{equhnlowerb}), we have 
\begin{equation}\label{equationliminfhpn}
\liminf_{n\to \infty} h_{p(n)}^{1/n}\geq \beta_1.
\end{equation}

By (\ref{equrnncomp}) and (\ref{equtrivialbound}), for every $n\geq 1$, we have $h_{n}\geq D^{-N}h_{p(r(n))}.$
By (\ref{equationliminfhpn}) and Lemma \ref{lemrnono}, we get that 
$$\liminf_{n\to \infty} h_{n}^{1/n}\geq \liminf_{n\to \infty}((D^{-N}h_{p(r(n))})^{1/r(n)})^{n/r(n)}\geq \beta_1\geq \beta^m.$$
Hence (\ref{equliminfhnbe}) holds. This concludes the proof.
\endproof

\proof[Proof of Lemma \ref{lemcurvedegreenotbounded}]
If $\la_1>\la_2$, then $f$ is cohomologically $1$-hyperbolic.
If $\la_1^2=\la_2$, as $\la_1>1$, $f$ is cohomologically $2$-hyperbolic.
By Lemma \ref{lemcurvedegregrowth}, there is a non-empty Zariski open subset $V$ of $X$ such that for every irreducible curve $C''$ of $X$, if 
$\dim f^n(C'')=1$ for every $n\geq 0$ and 
$C''\cap V_f\neq\emptyset$. Then the sequence $(L_n\cdot C''), n\geq 0$ is not bounded. 

As $C$ is not preperiodic and $\dim X\setminus V\leq 1$, after replacing $C$ by $f^l(C)$ for some $l\geq 0$, we may assume that $f^n(C)\cap V\neq\emptyset.$
Hence the generic point of $C$ is contained in $V_f$.  We conclude the proof by the previous paragraph. 
\endproof

\proof[Proof of Lemma \ref{lemrnono}]
For $j=0,\dots, r(n)-1$, we have 
$$p(j+1)-p(j)=1$$
if $j\not\in W;$
and 
$$p(j+1)-p(j)\leq N$$
if $j\in W.$
Hence we have $$r(n)\leq n\leq p(r(n))\leq r(n)+(N-1)w_{p(r(n))}\leq r(n)+(N-1)w_{n+N}.$$
So we have 
$$\limsup_{n\to \infty} \frac{r_n}{n}\leq 1\leq \liminf_{n\to \infty}\frac{r_n}{n}+(N-1)\lim_{n\to \infty}\frac{w_{n+N}}{n}=\liminf_{n\to \infty}\frac{r_n}{n}.$$
This concludes the proof.
\endproof

	\bibliography{dd}

\newcommand{\etalchar}[1]{$^{#1}$}
\begin{thebibliography}{AAdBM99}

\bibitem[AAdBM99]{Abarenkova1999}
N.~Abarenkova, J.-Ch. Angl\`es~d'Auriac, S.~Boukraa, and J.-M. Maillard.
\newblock Growth-complexity spectrum of some discrete dynamical systems.
\newblock {\em Phys. D}, 130(1-2):27--42, 1999.

\bibitem[AdMV06]{AnglesdAuriac2006}
J.-Ch. Angl\`es~d'Auriac, J.-M. Maillard, and C.~M. Viallet.
\newblock On the complexity of some birational transformations.
\newblock {\em J. Phys. A}, 39(14):3641--3654, 2006.

\bibitem[BC16]{Blanc-Cantat}
J\'er\'emy Blanc and Serge Cantat.
\newblock Dynamical degrees of birational transformations of projective
  surfaces.
\newblock {\em J. Amer. Math. Soc.}, 29(2):415--471, 2016.

\bibitem[BD01]{Briend2001}
J.-Y. Briend and J.~Duval.
\newblock Deux caract\'erisations de la mesure d'\'equilibre d'un endomorphisme
  de $\mathbb{P}^k(\mathbf{C})$.
\newblock {\em Publ. Math. Inst. Hautes \'Etudes Sci.}, 93:145¨C--159, 2001.

\bibitem[BD05]{Bedford2005}
Eric Bedford and Jeffrey Diller.
\newblock Energy and invariant measures for birational surface maps.
\newblock {\em Duke Math. J.}, 128(2):331--368, 2005.

\bibitem[BDJ20]{Bell2020a}
Jason~P. Bell, Jeffrey Diller, and Mattias Jonsson.
\newblock A transcendental dynamical degree.
\newblock {\em Acta Math.}, 225(2):193--225, 2020.

\bibitem[BDJK23]{Bell2023}
Jason Bell, Jeffrey Diller, Mattias Jonsson, and Holly Krieger.
\newblock {B}irational maps with transcendental dynamical degree.
\newblock arXiv:2107.04113, 2023.

\bibitem[BFs00]{Bonifant2000}
Araceli~M. Bonifant and John~Erik Forn\ae~ss.
\newblock Growth of degree for iterates of rational maps in several variables.
\newblock {\em Indiana Univ. Math. J.}, 49(2):751--778, 2000.

\bibitem[BG06]{Bombieri2006}
Enrico Bombieri and Walter Gubler.
\newblock {\em Heights in {D}iophantine geometry}, volume~4 of {\em New
  Mathematical Monographs}.
\newblock Cambridge University Press, Cambridge, 2006.

\bibitem[BGT15]{Bell2015}
Jason~P. Bell, Dragos Ghioca, and Thomas~J. Tucker.
\newblock The dynamical {M}ordell-{L}ang problem for {N}oetherian spaces.
\newblock {\em Funct. Approx. Comment. Math.}, 53(2):313--328, 2015.

\bibitem[BHS20]{Bell2020}
Jason~P. Bell, Fei Hu, and Matthew Satriano.
\newblock Height gap conjectures, {$D$}-finiteness, and a weak dynamical
  {M}ordell-{L}ang conjecture.
\newblock {\em Math. Ann.}, 378(3-4):971--992, 2020.

\bibitem[BIJ{\etalchar{+}}19]{Benedetto2019}
Robert Benedetto, Patrick Ingram, Rafe Jones, Michelle Manes, Joseph~H.
  Silverman, and Thomas~J. Tucker.
\newblock Current trends and open problems in arithmetic dynamics.
\newblock {\em Bull. Amer. Math. Soc. (N.S.)}, 56(4):611--685, 2019.

\bibitem[BK04]{Bedford2004}
Eric Bedford and Kyounghee Kim.
\newblock On the degree growth of birational mappings in higher dimension.
\newblock {\em J. Geom. Anal.}, 14(4):567--596, 2004.

\bibitem[BK08]{Bedford2008}
Eric Bedford and Kyounghee Kim.
\newblock Degree growth of matrix inversion: birational maps of symmetric,
  cyclic matrices.
\newblock {\em Discrete Contin. Dyn. Syst.}, 21(4):977--1013, 2008.

\bibitem[BLS93]{Bedford1993}
E.~Bedford, M.~Lyubich, and J.~Smillie.
\newblock Distribution of periodic points of polynomial diffeomorphisms of
  {$\bold C^2$}.
\newblock {\em Invent. Math.}, 114(2):277--288, 1993.

\bibitem[BV99]{Bellon1999}
M.P. Bellon and C-M. Viallet.
\newblock Algebraic entropy.
\newblock {\em Comm. Math. Phys.}, 204:425--437, 1999.
\newblock chao-dyn/9805006.

\bibitem[Can99]{Cantat1999}
Serge Cantat.
\newblock Dynamique des automorphismes des surfaces projectives complexes.
\newblock {\em C. R. Acad. Sci. Paris S\'{e}r. I Math.}, 328(10):901--906,
  1999.

\bibitem[Can04]{Cantat2004}
Serge Cantat.
\newblock Diff\'{e}omorphismes holomorphes {A}nosov.
\newblock {\em Comment. Math. Helv.}, 79(4):779--797, 2004.

\bibitem[Can11]{Cantat2007}
Serge Cantat.
\newblock Sur les groupes de transformations birationnelles des surfaces.
\newblock {\em Ann. of Math. (2)}, 174(1):299--340, 2011.

\bibitem[CO15]{Cantat2015}
Serge Cantat and Keiji Oguiso.
\newblock Birational automorphism groups and the movable cone theorem for
  {C}alabi-{Y}au manifolds of {W}ehler type via universal {C}oxeter groups.
\newblock {\em Amer. J. Math.}, 137(4):1013--1044, 2015.

\bibitem[Dan20]{Dang2020}
Nguyen-Bac Dang.
\newblock Degrees of iterates of rational maps on normal projective varieties.
\newblock {\em Proc. Lond. Math. Soc. (3)}, 121(5):1268--1310, 2020.

\bibitem[DDG10]{Diller2010}
Jeffrey Diller, Romain Dujardin, and Vincent Guedj.
\newblock Dynamics of meromorphic maps with small topological degree {III}:
  geometric currents and ergodic theory.
\newblock {\em Ann. Sci. \'Ec. Norm. Sup\'er. (4)}, 43(2):235--278, 2010.

\bibitem[Del74]{Deligne1974}
Pierre Deligne.
\newblock La conjecture de {W}eil. {I}.
\newblock {\em Inst. Hautes \'{E}tudes Sci. Publ. Math.}, (43):273--307, 1974.

\bibitem[DF01]{favre}
Jeffrey Diller and Charles Favre.
\newblock Dynamics of bimeromorphic maps of surfaces.
\newblock {\em Amer. J. Math.}, 123(6):1135--1169, 2001.

\bibitem[DF21]{Dang2021}
Nguyen-Bac Dang and Charles Favre.
\newblock Spectral interpretations of dynamical degrees and applications.
\newblock {\em Ann. of Math. (2)}, 194(1):299--359, 2021.

\bibitem[Din05]{Dinh2005c}
Tien-Cuong Dinh.
\newblock Suites d'applications m\'{e}romorphes multivalu\'{e}es et courants
  laminaires.
\newblock {\em J. Geom. Anal.}, 15(2):207--227, 2005.

\bibitem[DN11]{Dinh2011}
Tien-Cuong Dinh and Vi\^{e}t-Anh Nguy\^{e}n.
\newblock Comparison of dynamical degrees for semi-conjugate meromorphic maps.
\newblock {\em Comment. Math. Helv.}, 86(4):817--840, 2011.

\bibitem[Dol18]{Dolgachev2018}
Igor Dolgachev.
\newblock Salem numbers and {E}nriques surfaces.
\newblock {\em Exp. Math.}, 27(3):287--301, 2018.

\bibitem[DS]{Tien-CuongDinh}
Tien-Cuong Dinh and Nessim Sibony.
\newblock Density of positive closed currents and dynamics of {H}enon-type
  automorphisms of $\mathbb{C}^k$ (part {II}).
\newblock arXiv:1403.0070.

\bibitem[DS04]{Dinh2004}
Tien-Cuong Dinh and Nessim Sibony.
\newblock Regularization of currents and entropy.
\newblock {\em Ann. Sci. \'{E}cole Norm. Sup. (4)}, 37(6):959--971, 2004.

\bibitem[DS05]{Dinh2005}
Tien-Cuong Dinh and Nessim Sibony.
\newblock Une borne sup\'erieure pour l'entropie topologique d'une application
  rationnelle.
\newblock {\em Ann. of Math. (2)}, 161(3):1637--1644, 2005.

\bibitem[DS10]{Dinh2010}
Tien-Cuong Dinh and Nessim Sibony.
\newblock Dynamics in several complex variables: endomorphisms of projective
  spaces and polynomial-like mappings.
\newblock In {\em Holomorphic dynamical systems}, volume 1998 of {\em Lecture
  Notes in Math.}, pages 165--294. Springer, Berlin, 2010.

\bibitem[Duj06]{Dujardin2006}
Romain Dujardin.
\newblock Laminar currents and birational dynamics.
\newblock {\em Duke Math. J.}, 131(2):219--247, 2006.

\bibitem[ES13]{Esnault2013}
H\'{e}l\`ene Esnault and Vasudevan Srinivas.
\newblock Algebraic versus topological entropy for surfaces over finite fields.
\newblock {\em Osaka J. Math.}, 50(3):827--846, 2013.

\bibitem[Fak03]{fa}
Najmuddin Fakhruddin.
\newblock Questions on self maps of algebraic varieties.
\newblock {\em J. Ramanujan Math. Soc.}, 18(2):109--122, 2003.

\bibitem[Fav00]{Favre2000a}
Charles Favre.
\newblock {\em Dynamique des applications rationnelles}.
\newblock PhD thesis, 2000.

\bibitem[Fav03]{Favre2003}
Charles Favre.
\newblock Les applications monomiales en deux dimensions.
\newblock {\em Michigan Math. J.}, 51(3):467--475, 2003.

\bibitem[FJ07]{Favre2007}
Charles Favre and Mattias Jonsson.
\newblock Eigenvaluations.
\newblock {\em Ann. Sci. \'Ecole Norm. Sup. (4)}, 40(2):309--349, 2007.

\bibitem[FJ11]{Favre2011}
Charles Favre and Mattias Jonsson.
\newblock Dynamical compactifications of $\bold {C}^2$.
\newblock {\em Ann. of Math. (2)}, 173(1):211--248, 2011.

\bibitem[FRL10]{Favre2010}
Charles Favre and Juan Rivera-Letelier.
\newblock Th\'{e}orie ergodique des fractions rationnelles sur un corps
  ultram\'{e}trique.
\newblock {\em Proc. Lond. Math. Soc. (3)}, 100(1):116--154, 2010.

\bibitem[FTX22]{Favre2022}
Charles Favre, Tuyen~Trung Truong, and Junyi Xie.
\newblock Topological entropy of a rational map over a complete metrized field.
\newblock arXiv:2208.00668, 2022.

\bibitem[Ful84]{Fulton1984}
William Fulton.
\newblock {\em Intersection theory}, volume~2 of {\em Ergebnisse der Mathematik
  und ihrer Grenzgebiete (3) [Results in Mathematics and Related Areas (3)]}.
\newblock Springer-Verlag, Berlin, 1984.

\bibitem[FW12]{Favre2012}
Charles Favre and Elizabeth Wulcan.
\newblock Degree growth of monomial maps and {M}c{M}ullen's polytope algebra.
\newblock {\em Indiana Univ. Math. J.}, 61(2):493--524, 2012.

\bibitem[Ghy95]{Ghys1995}
\'{E}tienne Ghys.
\newblock Holomorphic {A}nosov systems.
\newblock {\em Invent. Math.}, 119(3):585--614, 1995.

\bibitem[Gig14]{Gignac2014}
William Gignac.
\newblock Measures and dynamics on {N}oetherian spaces.
\newblock {\em J. Geom. Anal.}, 24(4):1770--1793, 2014.

\bibitem[Gro64]{EGA-IV-I}
A.~Grothendieck.
\newblock \'{E}l\'{e}ments de g\'{e}om\'{e}trie alg\'{e}brique. {IV}. \'{E}tude
  locale des sch\'{e}mas et des morphismes de sch\'{e}mas. {I}.
\newblock {\em Inst. Hautes \'{E}tudes Sci. Publ. Math.}, (20):259, 1964.

\bibitem[Gro03]{Gromov2003}
Mikha\"{\i}l Gromov.
\newblock On the entropy of holomorphic maps.
\newblock {\em Enseign. Math. (2)}, 49(3-4):217--235, 2003.

\bibitem[GT09]{Ghioca2009}
Dragos Ghioca and Thomas~J. Tucker.
\newblock Periodic points, linearizing maps, and the dynamical {M}ordell-{L}ang
  problem.
\newblock {\em J. Number Theory}, 129(6):1392--1403, 2009.

\bibitem[Gue05a]{Guedj2005a}
Vincent Guedj.
\newblock Entropie topologique des applications m\'{e}romorphes.
\newblock {\em Ergodic Theory Dynam. Systems}, 25(6):1847--1855, 2005.

\bibitem[Gue05b]{Guedj2005}
Vincent Guedj.
\newblock Ergodic properties of rational mappings with large topological
  degree.
\newblock {\em Ann. of Math. (2)}, 161(3):1589--1607, 2005.

\bibitem[Hru]{hu7}
Ehud Hrushovski.
\newblock The elementary theory of the {F}robenius automorphisms.
\newblock arXiv:math/0406514v1.

\bibitem[HS00]{Hindry2000}
Marc Hindry and Joseph~H. Silverman.
\newblock {\em Diophantine geometry}, volume 201 of {\em Graduate Texts in
  Mathematics}.
\newblock Springer-Verlag, New York, 2000.

\bibitem[JL23]{Jiang2023}
Chen Jiang and Zhiyuan Li.
\newblock Algebraic reverse {K}hovanskii-{T}eissier inequality via {O}kounkov
  bodies.
\newblock {\em Math. Z.}, 305(2):Paper No. 26, 14, 2023.

\bibitem[JR18]{Jonsson2018}
Mattias Jonsson and Paul Reschke.
\newblock On the complex dynamics of birational surface maps defined over
  number fields.
\newblock {\em J. Reine Angew. Math.}, 744:275--297, 2018.

\bibitem[JSXZ21]{Jia2021}
Jia Jia, Takahiro Shibata, Junyi Xie, and De-Qi Zhang.
\newblock {E}ndomorphisms of quasi-projective varieties -- towards {Z}ariski
  dense orbit and {K}awaguchi-{S}ilverman conjectures.
\newblock arXiv:2104.05339, 2021.

\bibitem[Kaw06]{Kawaguchi2006}
Shu Kawaguchi.
\newblock Canonical height functions for affine plane automorphisms.
\newblock {\em Math. Ann.}, 335(2):285--310, 2006.

\bibitem[Kaw08]{Kawaguchi2008}
Shu Kawaguchi.
\newblock Projective surface automorphisms of positive topological entropy from
  an arithmetic viewpoint.
\newblock {\em Amer. J. Math.}, 130(1):159--186, 2008.

\bibitem[Kaw13]{Kawaguchi2013}
Shu Kawaguchi.
\newblock Local and global canonical height functions for affine space regular
  automorphisms.
\newblock {\em Algebra Number Theory}, 7(5):1225--1252, 2013.

\bibitem[KS14]{Kawaguchi2014}
Shu Kawaguchi and Joseph~H. Silverman.
\newblock Examples of dynamical degree equals arithmetic degree.
\newblock {\em Michigan Math. J.}, 63(1):41--63, 2014.

\bibitem[KS16]{Kawaguchi2016}
Shu Kawaguchi and Joseph~H. Silverman.
\newblock On the dynamical and arithmetic degrees of rational self-maps of
  algebraic varieties.
\newblock {\em J. Reine Angew. Math.}, 713:21--48, 2016.

\bibitem[Laz04]{Lazarsfeld}
Robert Lazarsfeld.
\newblock {\em Positivity in algebraic geometry. {I}}, volume~48 of {\em
  Ergebnisse der Mathematik und ihrer Grenzgebiete. 3. Folge. A Series of
  Modern Surveys in Mathematics [Results in Mathematics and Related Areas. 3rd
  Series. A Series of Modern Surveys in Mathematics]}.
\newblock Springer-Verlag, Berlin, 2004.
\newblock Classical setting: line bundles and linear series.

\bibitem[Les21]{Lesieutre2021a}
John Lesieutre.
\newblock Tri-{C}oble surfaces and their automorphisms.
\newblock {\em J. Mod. Dyn.}, 17:267--284, 2021.

\bibitem[Lin12]{Lin2012}
Jan-Li Lin.
\newblock Pulling back cohomology classes and dynamical degrees of monomial
  maps.
\newblock {\em Bull. Soc. Math. France}, 140(4):533--549, 2012.

\bibitem[LS23]{Luo2023}
Wenbin Luo and Jiarui Song.
\newblock Arithmetic degrees of dynamical systems over fields of characteristic
  zero.
\newblock arXiv:2401.11982, 2023.

\bibitem[Mat20]{Matsuzawa2020a}
Yohsuke Matsuzawa.
\newblock On upper bounds of arithmetic degrees.
\newblock {\em Amer. J. Math.}, 142(6):1797--1820, 2020.

\bibitem[Mat23]{Matsuzawa2023}
Yohsuke Matsuzawa.
\newblock {R}ecent advances on {K}awaguchi-{S}ilverman conjecture.
\newblock arXiv:2311.15489, 2023.

\bibitem[McM02]{McMullen2002}
Curtis~T. McMullen.
\newblock Dynamics on {$K3$} surfaces: {S}alem numbers and {S}iegel disks.
\newblock {\em J. Reine Angew. Math.}, 545:201--233, 2002.

\bibitem[McM07]{McMullen2007}
Curtis~T. McMullen.
\newblock Dynamics on blowups of the projective plane.
\newblock {\em Publ. Math. Inst. Hautes \'{E}tudes Sci.}, (105):49--89, 2007.

\bibitem[McM11]{McMullen2011}
Curtis~T. McMullen.
\newblock K3 surfaces, entropy and glue.
\newblock {\em J. Reine Angew. Math.}, 658:1--25, 2011.

\bibitem[McM16]{McMullen2016}
Curtis~T. McMullen.
\newblock Automorphisms of projective {K}3 surfaces with minimum entropy.
\newblock {\em Invent. Math.}, 203(1):179--215, 2016.

\bibitem[MHV97]{M.Henneaux1997}
J.~Krasil'shchik M.~Henneaux and A.~Vinogradov, editors.
\newblock {\em Invariants of rational transformations and algebraic entropy},
  volume 219 of {\em AMS Contemporary Mathematics}, 1997.
\newblock pp 233--240.

\bibitem[MSS18]{Matsuzawa2018}
Yohsuke Matsuzawa, Kaoru Sano, and Takahiro Shibata.
\newblock Arithmetic degrees and dynamical degrees of endomorphisms on
  surfaces.
\newblock {\em Algebra Number Theory}, 12(7):1635--1657, 2018.

\bibitem[MW]{Matsuzawa}
Yohsuke Matsuzawa and Long Wang.
\newblock {A}rithmetic degrees and {Z}ariski dense orbits of cohomologically
  hyperbolic maps.
\newblock arXiv:2212.05804.

\bibitem[MZ22]{Meng2022}
Sheng Meng and De-Qi Zhang.
\newblock Kawaguchi-{S}ilverman conjecture for certain surjective
  endomorphisms.
\newblock {\em Doc. Math.}, 27:1605--1642, 2022.

\bibitem[MZ23]{Meng2023}
Sheng Meng and De-Qi Zhang.
\newblock {A}dvances in the equivariant minimal model program and their
  applications in complex and arithmetic dynamics.
\newblock arXiv:2311.16369, 2023.

\bibitem[Ngu06]{Nguyen2006}
Vi\^{e}t-Anh Nguy\^{e}n.
\newblock Algebraic degrees for iterates of meromorphic self-maps of {${\Bbb
  P}^k$}.
\newblock {\em Publ. Mat.}, 50(2):457--473, 2006.

\bibitem[Ogu09]{Oguiso2009}
Keiji Oguiso.
\newblock A remark on dynamical degrees of automorphisms of hyperk\"{a}hler
  manifolds.
\newblock {\em Manuscripta Math.}, 130(1):101--111, 2009.

\bibitem[Ogu10]{Oguiso2010}
Keiji Oguiso.
\newblock The third smallest {S}alem number in automorphisms of {$K3$}
  surfaces.
\newblock In {\em Algebraic geometry in {E}ast {A}sia---{S}eoul 2008},
  volume~60 of {\em Adv. Stud. Pure Math.}, pages 331--360. Math. Soc. Japan,
  Tokyo, 2010.

\bibitem[Ogu14]{Oguiso2014}
Keiji Oguiso.
\newblock Some aspects of explicit birational geometry inspired by complex
  dynamics.
\newblock In {\em Proceedings of the {I}nternational {C}ongress of
  {M}athematicians---{S}eoul 2014. {V}ol. {II}}, pages 695--721. Kyung Moon Sa,
  Seoul, 2014.

\bibitem[OT15]{Oguiso2015}
Keiji Oguiso and Tuyen~Trung Truong.
\newblock Explicit examples of rational and {C}alabi-{Y}au threefolds with
  primitive automorphisms of positive entropy.
\newblock {\em J. Math. Sci. Univ. Tokyo}, 22(1):361--385, 2015.

\bibitem[OY20]{Oguiso2020}
Keiji Oguiso and Xun Yu.
\newblock Minimum positive entropy of complex {E}nriques surface automorphisms.
\newblock {\em Duke Math. J.}, 169(18):3565--3606, 2020.

\bibitem[Pet15]{Petsche2015}
Clayton Petsche.
\newblock On the distribution of orbits in affine varieties.
\newblock {\em Ergodic Theory Dynam. Systems}, 35(7):2231--2241, 2015.

\bibitem[Res17]{Reschke2017}
Paul Reschke.
\newblock Salem numbers and automorphisms of abelian surfaces.
\newblock {\em Osaka J. Math.}, 54(1):1--15, 2017.

\bibitem[RG71]{Raynaud1971}
Michel Raynaud and Laurent Gruson.
\newblock Crit\`eres de platitude et de projectivit\'{e}. {T}echniques de
  ``platification'' d'un module.
\newblock {\em Invent. Math.}, 13:1--89, 1971.

\bibitem[RS97]{Russakovskii1997}
Alexander Russakovskii and Bernard Shiffman.
\newblock Value distribution for sequences of rational mappings and complex
  dynamics.
\newblock {\em Indiana Univ. Math. J.}, 46(3):897--932, 1997.

\bibitem[SB21]{Shepherd-Barron2021}
N.~I. Shepherd-Barron.
\newblock Some effectivity questions for plane {C}remona transformations in the
  context of symmetric key cryptography.
\newblock {\em Proc. Edinb. Math. Soc. (2)}, 64(1):1--28, 2021.

\bibitem[SC18]{Silverman2018}
Joseph~H. Silverman and Gregory~S. Call.
\newblock Degeneration of dynamical degrees in families of maps.
\newblock {\em Acta Arith.}, 184(2):101--116, 2018.

\bibitem[Sib99]{Sibony1999}
Nessim Sibony.
\newblock Dynamique des applications rationnelles de {$\bold P^k$}.
\newblock In {\em Dynamique et g\'{e}om\'{e}trie complexes ({L}yon, 1997)},
  volume~8 of {\em Panor. Synth\`eses}, pages ix--x, xi--xii, 97--185. Soc.
  Math. France, Paris, 1999.

\bibitem[Sil14]{Silverman2014}
Joseph~H. Silverman.
\newblock Dynamical degree, arithmetic entropy, and canonical heights for
  dominant rational self-maps of projective space.
\newblock {\em Ergodic Theory Dynam. Systems}, 34(2):647--678, 2014.

\bibitem[Son23]{Song2023}
Jiarui Song.
\newblock A high-codimensional arithmetic {S}iu's inequality and its
  application to higher arithmetic degrees.
\newblock arXiv:2306.11591, 2023.

\bibitem[SV22]{Shuddhodan2022}
K.~V. Shuddhodan and Yakov Varshavsky.
\newblock The {H}rushovski-{L}ang-{W}eil estimates.
\newblock {\em Algebr. Geom.}, 9(6):651--687, 2022.

\bibitem[TCD]{Tien-CuongDinha}
Tuyen Trung~Truong Tien-Cuong~Dinh, Viet-Anh~Nguyen.
\newblock Equidistribution for meromorphic maps with dominant topological
  degree.
\newblock arXiv:1303.5992.

\bibitem[Tru20]{Truong2020}
Tuyen~Trung Truong.
\newblock Relative dynamical degrees of correspondences over a field of
  arbitrary characteristic.
\newblock {\em J. Reine Angew. Math.}, 758:139--182, 2020.

\bibitem[Tru24]{Truong2016}
Tuyen~Trung Truong.
\newblock {R}elations between dynamical degrees, {W}eil's {R}iemann hypothesis
  and the standard conjectures.
\newblock {\em Commentarii Mathematici Helvetici}, 2024.
\newblock published on line.

\bibitem[Ueh16]{Uehara2016}
Takato Uehara.
\newblock Rational surface automorphisms with positive entropy.
\newblock {\em Ann. Inst. Fourier (Grenoble)}, 66(1):377--432, 2016.

\bibitem[Ure18]{Urech}
Christian Urech.
\newblock Remarks on the degree growth of birational transformations.
\newblock {\em Math. Res. Lett.}, 25(1):291--308, 2018.

\bibitem[Wan23]{Wang2022}
Long Wang.
\newblock {Periodic points and arithmetic degrees of certain rational
  self-maps}.
\newblock {\em Journal of the Mathematical Society of Japan}, pages 1 -- 26,
  2023.

\bibitem[Xie15]{Xie2015}
Junyi Xie.
\newblock Periodic points of birational transformations on projective surfaces.
\newblock {\em Duke Math. J.}, 164(5):903--932, 2015.

\bibitem[Xie23a]{Xie2023a}
Junyi Xie.
\newblock {A}round the dynamical mordell-lang conjecture.
\newblock arXiv:2307.05885, 2023.

\bibitem[Xie23b]{Xie2023}
Junyi Xie.
\newblock Remarks on algebraic dynamics in positive characteristic.
\newblock {\em J. Reine Angew. Math.}, 797:117--153, 2023.

\bibitem[XZ24]{Xu2024}
Disheng Xu and Jiesong Zhang.
\newblock {O}n holomorphic partially hyperbolic systems.
\newblock arXiv:2401.04310, 2024.

\bibitem[Yom87]{Yomdin1987}
Y.~Yomdin.
\newblock Volume growth and entropy.
\newblock {\em Israel J. Math.}, 57(3):285--300, 1987.

\end{thebibliography}
\end{document}